\documentclass[notitlepage,11pt,reqno]{amsart}
\usepackage{amssymb,amsmath,amsthm,mathrsfs,color}
\usepackage[breaklinks=true]{hyperref}
\usepackage[letterpaper]{geometry}
\geometry{hmargin={1.1in, 1.1in}, vmargin={1.1in, 1.1in}}

\newcommand{\N}{\ensuremath{\mathbb N}}
\newcommand{\Z}{\ensuremath{\mathbb Z}}

\newcommand{\R}{\ensuremath{\mathbb R}}

\newcommand{\bbd}{\ensuremath{\mathbb D}}

\newcommand{\T}{\ensuremath{\mathbb U}}

\newcommand{\G}{\ensuremath{\mathscr G}}

\newcommand{\bp}{\ensuremath{\mathbb P}}

\newcommand{\calp}{\ensuremath{\mathcal P}}

\newcommand{\heart}[2]{\ensuremath{\left\| #1
\right\|_{\psi,#2}}}
\newcommand{\normo}[1]{\ensuremath{\left\| #1 \right\|^1_\varphi}}
\newcommand{\normi}[1]{\ensuremath{\left\| #1 \right\|^\infty_\zeta}}

\newcommand{\comment}[1]{}

\DeclareMathOperator{\ind}{\ensuremath{ind}}
\DeclareMathOperator{\inj}{\ensuremath{inj}}

\newtheorem{theorem}[equation]{Theorem}
\newtheorem{prop}[equation]{Proposition}
\newtheorem{cor}[equation]{Corollary}
\newtheorem{lemma}[equation]{Lemma}

\theoremstyle{definition}
\newtheorem{remark}[equation]{Remark}
\newtheorem{definition}[equation]{Definition}

\numberwithin{equation}{section}

\begin{document}
\title{Differential calculus on the space of countable labelled graphs}

\author{Apoorva Khare}
\address[A.~Khare]{Department of Mathematics, Indian Institute of
Science; Analysis and Probability Research Group; Bangalore, India}
\email{khare@iisc.ac.in}

\author{Bala Rajaratnam}
\address[B.~Rajaratnam]{University of California, Davis, USA and
University of Sydney, Australia (visiting)}
\email{brajaratnam01@gmail.com}

\thanks{This work was partially supported by the following: US Air Force
Office of Scientific Research grant award FA9550-13-1-0043, US National
Science Foundation under grant DMS-0906392, DMS-CMG 1025465, AGS-1003823,
DMS-1106642, DMS-CAREER-1352656, the UPS Foundation, SMC-DBNKY, Defense
Advanced Research Projects Agency DARPA YFA N66001-111-4131, Ramanujan
Fellowship SB/S2/RJN-121/2017, MATRICS grant MTR/2017/000295, and
SwarnaJayanti Fellowship grants SB/SJF/2019-20/14 and DST/SJF/MS/2019/3
from SERB and DST (Govt.~of India), by grant F.510/25/CAS-II/2018(SAP-I)
from UGC (Govt.~of India), and a Young Investigator Award from the
Infosys Foundation.}

\keywords{Countable labelled graphs, graph limits, differential calculus,
first derivative test}
\date{\today}
\subjclass[2010]{05C63 (primary)}

\begin{abstract}
The study of very large graphs is a prominent theme in modern-day
mathematics. In this paper we develop a rigorous foundation for studying
the space of finite labelled graphs and their limits. These limiting
objects are naturally countable graphs, and the completed graph space
$\G(V)$ is identified with the 2-adic integers as well as the Cantor set.
The goal of this paper is to develop a model for differentiation on graph
space in the spirit of the Newton--Leibnitz calculus. To this end, we
first study the space of all finite labelled graphs and their limiting
objects, and establish analogues of left-convergence, homomorphism
densities, a Counting Lemma, and a large family of topologically
equivalent metrics on labelled graph space. We then establish results
akin to the First and Second Derivative Tests for real-valued functions
on countable graphs, and completely classify the permutation
automorphisms of graph space that preserve its topological and
differential structures.
\end{abstract}
\maketitle

%{{{1 Section 1 - Introduction
\section{Introduction}

Large amounts of modern data comes in the form of networks/graphs, as
compared to standard discrete or continuous data that lives on the
integers or the real line respectively. Thus, subjects like
Erd\"os--R\'enyi random graphs, finite (large) graphs and their limits
have a vast number of applications -- to social networks (e.g.,
friendship graph), the internet (and world-wide web), ecological and
biological networks (such as the human brain), resistance networks and
chip design among others. In recent times, these areas have become the
subject of a large body of literature.
An important feature of these networks is that they are always changing
(increasing) with respect to time. Additional vertices and edges are
being added to the network, which makes the study of large graphs and
their limits -- in a unified setting -- a necessary and important
subject.

There are several significant strides that have been made in the
literature. Prominent among them is the comprehensive, unifying theory
developed by Borgs, Chayes, Lov\'asz, Sos, Szegedy, Vesztergombi, and
several others. In this (unlabelled) theory the space of {\it graphons}
was introduced and studied; see e.g.~the comprehensive monograph
\cite{Lo} and the references therein for more on graphons.
These are symmetric measurable functions $: [0,1]^2 \to [0,1]$. It is
shown that the space of weak equivalence classes of graphons is a compact
path-connected metric space, and the notion of graph limits (of ``dense
unlabelled graphs'') coincides with that of limits in this metric. There
is a large body of work on related subjects such as random graphs,
subgraph sampling, parameter testing, and other topics.

Some of the future challenges of applying the theory of graphons to
real-world networks involve labelling and density issues. First, graphs
in real-life network data are often labelled, and one often needs to
distinguish between vertices as each vertex has a specific meaning (for
instance in a person-to-person network).
A second reason is that in real-world situations the underlying graphs
are often sparse. Hence results on limits of dense graph sequences may
not be as applicable. There has been tremendous activity on extending
results from dense graph limit theory to various sparse settings,
including bounded degree graphs, sparse graphs without dense spots,
locally rooted trees, graphings, and recently, an $L^p$-theory of
sparse graphs. See e.g.~\cite{BJR2,BR1,BCCZ2,BCCZ1,Lo} and the references
therein.

%Thus in this paper our focus is not on graphons, but instead on the
%mathematical aspects of the space of {\it labelled} graphs and their
%limits, as compared to the probabilistic aspects of the limits of {\it
%unlabelled} graphs (i.e., graphons). Our reasons in doing so are
%manifold. First, in real-life network data, graphs are usually labelled,
%and one often needs to distinguish between vertices as each vertex has a
%specific meaning (for instance in a person-to-person network). Thus, one
%cannot quotient out by permutation automorphisms, or forget the
%labelling. A second reason is that in real-world situations the
%underlying graphs are often sparse. Hence results on limits of dense
%graph sequences may not be as applicable. Thus we propose the study of
%finite labelled graphs on a countable labelled vertex set, as well as
%their limits in the present paper.\medskip

In this paper, we focus on the first motivation: namely, to study finite
labelled graphs and their limits.
A necessary first step in studying such notions is to develop a suitable
framework in which to study all finite labelled graphs at once. One way
to proceed is to construct a space containing the countable set of all
finite graphs, and then to study the topology of this space, as it
pertains to the limits of finite graphs. Having done so, our next step is
to develop a framework for studying functions on this space. For
instance, it is of immense interest to be able to maximize real-valued
functions on graphs. This requires developing a comprehensive theory of
differential calculus on graphs. In particular, can a version of the
First Derivative Test be formulated and proved for real-valued functions
on graphs? At present, such a theory does not exist in the labelled
setting. Hence a framework that allows the space of graphs to be treated
as a continuum, armed with a graph calculus, could have tremendous
benefits and was one of the main motivations of this paper.

The study of limits of finite labelled graphs is also interesting in that
one encounters several parallel results to the development of the theory
of graphons. For instance, the notion of left-convergence of a labelled
graph sequence can be made precise in terms of ``homomorphism
indicators''. There exists a large family of topologically equivalent
metrics which metrizes the topology of left-convergence. We also
formulate analogues of the Counting Lemma, the Inverse Counting Lemma,
and a Weak Regularity Lemma for labelled graphs. Moreover, we show a
representation theorem wherein limits of left-convergent sequences of
finite labelled graphs are graphs with countably many vertices. The space
of graphs and their limits is shown to be a compact metric space.

At the same time there are several key differences between the labelled
theory and the graphon setting. The fundamental difference is that one no
longer quotients out by permutation automorphisms (or the group of
measure-preserving bijections on $[0,1]$) in the labelled setting. Thus,
adding additional nodes (but no edges) changes the underlying graphon;
but in our setting, for labelled graphs the underlying vertex set is
already fixed - i.e., non-isolated nodes come with additional ``ghost
vertices'' in the fixed vertex set.
There are also other distinctions. For instance, countable labelled
graphs form a compact and totally disconnected group, whereas the space
of graphons is path-connected and has no natural group structure on it.
Moreover, limits of sparse graphs in the dense theory are always zero
(under the cut metric) while in the labelled setting we treat sparse and
dense labelled graphs on an equal footing. The limit of a sparse graph
sequence can even be an infinite graph. Finally, we add that in joint
work \cite{DGKR} with Diao and Guillot, we have initiated the study of
differential calculus in the graphon setting as well; yet there are
significant distinctions between that work and the present paper, owing
to the two different topological structures.\medskip

\subsection*{Organization}
We briefly outline the organization of the paper.
In Section \ref{Snorms} we introduce the space of graphs $\G(V)$ on a
countable labelled vertex set $V$, and develop initial results in
topology and analysis for $\G(V)$. We also introduce and study
homomorphism indicators, which parallel homomorphism densities in the
graphon setting including through a Counting Lemma and left-convergence.
We then develop a theory of Newton--Leibnitz differentiation on $\G(V)$
in Section \ref{Sdiff}, and prove various results including a version of
the First Derivative Test. We also completely classify the effect of
permutation automorphisms on the topological and differential structure
of $\G(V)$.
Finally, in Section \ref{Sedge} we initiate the study of an interesting
summary statistic for countable graphs: the limiting edge density.
%}}}

\section{The space of countable labelled graphs: topology and
homomorphism indicators}\label{Snorms}

Consider an arbitrary vertex set $V$. Let $\G(V)$ denote the space of all
labelled graphs with vertices in the (labelled) set $V$, and no
self-loops or repeated edges between vertices (but possibly isolated
vertices). Let $\G_0(V)$ and $\G_1(V)$ denote the sets of graphs on $V$
which have finitely many edges and co-finitely many edges respectively,
and define $\G'(V) := \G(V) \setminus (\G_0(V) \cup \G_1(V))$. Also
denote by $K_V$ the complete graph on $V$.
Throughout this paper, when the vertex set $V$ is specified we will
identify a graph $G = (V,E)$ with its set of edges $E \subset K_V$; thus,
all graphs in $\G(V)$ are subsets of $K_V$.
In other words, every graph with labelled vertex set $V$ is associated
with a function $G : K_V \to \{ 0, 1 \}$, or equivalently, with a
symmetric function $f^G : V \times V \to \{ 0, 1 \}$ which is zero on the
diagonal. Note that this is parallel to the unlabelled setting, in which
every finite simple graph $G$ is associated with a graphon, i.e., a
symmetric step-function $f^G : [0,1] \times [0,1] \to [0,1]$ which is
zero on the diagonal.

Under some abuse of notation, we also write $\G(V) = \calp(K_V) = (\Z /
2\Z)^{K_V}$, which is a commutative unital $\Z / 2\Z$-algebra. On the
level of subsets of $K_V$, the pointwise addition and multiplication in
$(\Z / 2\Z)^{K_V}$ correspond to taking the symmetric difference and
intersection, respectively. In other words, if $G_i = (V, E_i)$ are
graphs in $\G(V)$ for $i=1,2$, then the algebra operations are as
follows:
\[ G_1 \pm G_2 := (V, E_1 \Delta E_2), \quad G_1 \cdot G_2 := (V, E_1
\cap E_2), \quad {\bf 1}_{\G(V)} := K_V, \quad \overline{0} \cdot G_1 :=
{\bf 0}, \quad \overline{1} \cdot G_1 := G_1, \]

\noindent where ${\bf 0} = 0_{\G(V)} : = (V, \emptyset)$ is the
disconnected/empty graph on $V$.

%{{{1 Section 2.1 - Graph convergence
\subsection{Graph convergence}

We introduce the following notion of convergence on $\G(V)$.

\begin{definition}\label{D2}
A sequence of graphs $G_n \in \G(V)$ is said to {\it converge} to a graph
$G \in \G(V)$ if for every edge $e \in K_V$, the indicator sequence ${\bf
1}_{e \in G_n}$ converges to ${\bf 1}_{e \in G}$.
\end{definition}

Note that this notion of graph convergence induces precisely the product
topology on $\G(V) = (\Z / 2\Z)^{K_V}$. Thus the following properties of
graph convergence in $\G(V)$ are standard.

\begin{lemma}\label{Lconv}
Fix a labelled set $V$, and identify each $G \in \G(V)$ with its set of
edges.
\begin{enumerate}
\item If a sequence $G_n$ in $\G(V)$ is convergent, then the limit is
unique.

\item If $G_n \subset G_{n+1}$ (or $G_n \supset G_{n+1}$) for all $n$,
then $\displaystyle \lim_{n \to \infty} G_n = \bigcup_{n \in \N} G_n$
(respectively, $\displaystyle \bigcap_{n \in \N} G_n$).

\item If $G_n \to G$ and $G'_n \to G'$ in $\G(V)$ as $n \to \infty$, then
$G_n + G'_n \to G + G'$ and $G_n \cdot G'_n \to G \cdot G'$. (Here, $+ :=
\Delta$ and $\cdot := \cap$, as above.)

\item $G_n$ converges (to $G$) if and only if for all finite subsets $E_0
\subset K_V$, the sets $E_0 \cap G_n$ are eventually constant (and their
limit equals $E_0 \cap G$).
\end{enumerate}
\end{lemma}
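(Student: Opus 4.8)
The plan is to reduce all four assertions to a single reformulation of Definition \ref{D2}: since $\{0,1\}$ carries the discrete topology, the convergence ${\bf 1}_{e \in G_n} \to {\bf 1}_{e \in G}$ is equivalent to the statement that for each fixed edge $e \in K_V$ there is an index $N_e$ with $e \in G_n \iff e \in G$ for all $n \geq N_e$; equivalently, under the identification $\G(V) = (\Z/2\Z)^{K_V}$ already noted before the lemma, $G_n \to G$ means coordinatewise convergence. Each of (1)--(4) is then a standard fact about products of finite discrete spaces, and I would verify them directly from this reformulation.

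For (1), if $G_n \to G$ and $G_n \to G'$, then for every edge $e$ the eventually-constant sequence ${\bf 1}_{e \in G_n}$ has both ${\bf 1}_{e\in G}$ and ${\bf 1}_{e \in G'}$ as its eventual value, forcing ${\bf 1}_{e \in G} = {\bf 1}_{e \in G'}$; as this holds for all $e$, we get $G = G'$ (this is just the Hausdorff property of the product topology). For (2), suppose $G_n \subseteq G_{n+1}$ for all $n$ and set $G := \bigcup_n G_n$. Given an edge $e$: if $e \in G$ then $e \in G_N$ for some $N$, hence $e \in G_n$ for all $n \geq N$ by monotonicity; if $e \notin G$ then $e \notin G_n$ for every $n$. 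In both cases ${\bf 1}_{e \in G_n}$ is eventually equal to ${\bf 1}_{e \in G}$, so $G_n \to G$, which gives both existence of the limit and its value; the decreasing case is identical with $G := \bigcap_n G_n$. For (3), fix an edge $e$ and recall that on indicators the algebra operations become ${\bf 1}_{e \in G_1 + G_2} = {\bf 1}_{e \in G_1} + {\bf 1}_{e \in G_2}$ and ${\bf 1}_{e \in G_1 \cdot G_2} = {\bf 1}_{e \in G_1} \cdot {\bf 1}_{e \in G_2}$ in $\Z/2\Z$; choosing $N$ past the stabilization indices of both ${\bf 1}_{e \in G_n}$ and ${\bf 1}_{e \in G'_n}$, the sequences ${\bf 1}_{e \in G_n + G'_n}$ and ${\bf 1}_{e \in G_n \cdot G'_n}$ are constant for $n \geq N$ with the desired values, so $G_n + G'_n \to G + G'$ and $G_n \cdot G'_n \to G \cdot G'$.

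For (4), the forward direction uses that a finite set $E_0 = \{e_1, \dots, e_m\}$ admits a single threshold $N := \max_i N_{e_i}$ beyond which $e_i \in G_n \iff e_i \in G$ holds simultaneously for all $i$, whence $E_0 \cap G_n = E_0 \cap G$ for all $n \geq N$; the converse is immediate by specializing to singletons $E_0 = \{e\}$. This is the only place where anything must be checked with care: finiteness of $E_0$ is precisely what permits taking a maximum of finitely many stabilization indices, and the equivalence would fail for infinite $E_0$. But this is hardly an obstacle — the lemma has no genuine difficulty, being a transcription of elementary properties of the product topology on $(\Z/2\Z)^{K_V}$, and the only ``hard part'' is the routine bookkeeping with eventual-equality indices.
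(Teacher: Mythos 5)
Your proof is correct, and it is exactly the standard coordinatewise-stabilization argument that the paper itself invokes without writing out (the lemma is stated with the remark that these properties of the product topology on $(\Z/2\Z)^{K_V}$ are standard). All four verifications, including the observation that finiteness of $E_0$ in part (4) is what allows taking a maximum of the stabilization indices, are accurate.
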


%Note that if $G_n$ is an increasing sequence of graphs, then ordinary
%%intuition suggests that the limit should be their union (i.e., the union
%of their edge sets); and similarly, the limit of a decreasing sequence
%should naturally be their common intersection. This is made precise by
In turn, Lemma \ref{Lconv} helps summarize the topological properties of
graph space $\G(V)$:

\begin{prop}\label{Ttop}
For any set $V$, $\G(V)$ is a totally disconnected, compact, abelian,
topological $\Z / 2\Z$-algebra. The notion of graph limits above, agrees
with the same notion in this (Hausdorff) topology.
The sets $\G_0(V)$ and $\G_1(V)$ are always dense in $\G(V)$ (so $\G(V)$
is separable if $V$ is countable). Moreover, $\G(V)$ is perfect if and
only if $V$ is infinite.
\end{prop}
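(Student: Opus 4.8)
The plan is to dispatch the four assertions in order, each reducing to a standard fact about the product topology on $(\Z/2\Z)^{K_V}$ together with Lemma \ref{Lconv}. First I would note that $\Z/2\Z$, with the discrete topology, is a finite, hence compact, Hausdorff, totally disconnected topological $\Z/2\Z$-algebra, so by Tychonoff's theorem $\G(V) = (\Z/2\Z)^{K_V}$ is compact, and it is Hausdorff and totally disconnected because arbitrary products inherit these properties; concretely, the cylinder sets $\{H \in \G(V) : H \cap E_0 = S\}$ with $E_0 \subset K_V$ finite and $S \subseteq E_0$ are clopen and form a basis, so $\G(V)$ is zero-dimensional. Continuity of $\Delta$ and $\cap$ then follows from the fact that these operations act coordinatewise: the preimage under addition (or multiplication) of such a basic cylinder set, which depends only on the finite edge set $E_0$, is again a union of products of cylinder sets depending only on $E_0$ in each factor, hence open. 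Commutativity and the remaining $\Z/2\Z$-algebra axioms were already observed in Section \ref{Snorms}.

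For the second sentence I would simply recall that a net --- in particular a sequence --- converges in a product space precisely when it converges in every coordinate, which is exactly the content of Definition \ref{D2} (and of Lemma \ref{Lconv}(4)); this takes a line.

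For density I would argue by explicit truncation. Given $G \in \G(V)$ and a basic open neighborhood $U = \{H : H \cap E_0 = G \cap E_0\}$ with $E_0 \subset K_V$ finite, the graph with edge set $G \cap E_0$ has at most $|E_0|$ edges and lies in $U$, so $\G_0(V) \cap U \neq \emptyset$; dually, the graph with edge set $K_V \setminus (E_0 \setminus G)$ has finite complement and also lies in $U$, so $\G_1(V) \cap U \neq \emptyset$. Hence both $\G_0(V)$ and $\G_1(V)$ are dense. When $V$ is countable, $K_V$ and therefore the collection of its finite subsets are countable, so $\G_0(V)$ is a countable dense subset and $\G(V)$ is separable.

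Finally, for the ``perfect'' dichotomy I would split on the size of $V$. If $V$ is finite then $K_V$ is finite and $\G(V)$ is a finite discrete space, so every point is isolated and $\G(V)$ is not perfect. If $V$ is infinite then $K_V$ is infinite, and for any $G \in \G(V)$ and any basic neighborhood $U$ determined by a finite $E_0 \subset K_V$ one may choose an edge $e \in K_V \setminus E_0$; then the graph $G \Delta \{e\}$ obtained from $G$ by toggling $e$ lies in $U$ and differs from $G$, so $G$ is not isolated and $\G(V)$ is perfect. The only place the argument genuinely uses the combinatorial structure of $K_V$, rather than formal properties of products, is this last dichotomy --- specifically the observation that $|V|$ is finite if and only if $|K_V|$ is finite, together with the edge-flip construction --- so I regard that (elementary) point as the main thing to get right.
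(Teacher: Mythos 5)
Your proof is correct and follows essentially the same route as the paper's: reduce everything to standard facts about the product topology on $(\Z/2\Z)^{K_V}$, and prove density by exhibiting, in each basic cylinder neighborhood of $G$, the finite truncation $G \cap E_0$ and the co-finite graph $(G \cap E_0) \cup (K_V \setminus E_0)$ (which coincides with your $K_V \setminus (E_0 \setminus G)$). The only difference is that you spell out the separability count and the ``perfect if and only if $V$ infinite'' edge-toggling dichotomy, which the paper dismisses as standard; both of your arguments there are correct.
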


\noindent Since $\G(V)$ is not connected, we remark that the Intermediate
Value Theorem does not hold for $\G(V)$. For the same reason, the notion
of convex functions does not make sense on $\G(V)$.

\begin{proof}
Note that $\G(V) = (\Z / 2\Z)^{K_V}$, and $\Z / 2\Z = \{ 0, 1 \}$ is a
compact, discrete abelian group. By Lemma \ref{Lconv}(3), the algebra
operations are continuous with respect to the notion of convergence
above; but this is precisely the same as coordinatewise convergence, so
the operations are all continuous with respect to the product topology.
Most of the remaining assertions are standard.
Finally, we claim that (co)finite graphs are dense in $\G(V)$. To show
the claim, use the standard subbase of open sets for the product
topology. Thus, given any open neighborhood $U$ of a graph $G \in \G(V)$,
there exist $n \in \N$ and edges $e_1, \dots, e_n \in K_V$ such that the
open cylinder
\[ \{ G' \in \G(V) : {\bf 1}_{e_i \in G'} = {\bf 1}_{e_i \in G}\ \forall
1 \leq i \leq n \} \]

\noindent is contained in $U$. Now define
\[ G_0 := G \cap \{ e_1, \dots, e_n \}, \qquad
G_1 := G_0 \coprod (K_V \setminus \{ e_1, \dots, e_n \}). \]

\noindent Then $G_i \in \G_i(V)$ for $i=0,1$. This shows that every open
neighborhood $U$ of any $G \in \G(V)$ contains at least one finite graph
$G_0$ and co-finite graph $G_1$. The claim immediately follows.
\end{proof}

\begin{remark}
Note that the above notion of convergence resembles that of
left-convergence for graphons. This is made more precise in Section
\ref{Shomind} by introducing ``homomorphism indicators'', which are
analogues in the labelled setting of homomorphism densities.
Homomorphism indicators turn out to be continuous on graph space and to
satisfy a Stone--Weierstrass type result; additionally, they lead
naturally to a notion of left-convergence in the labelled setting.
\end{remark}

\begin{remark}
Another notion of convergence -- typically of bounded degree graphs on
infinite (usually uncountable) vertex sets -- is that of
\textit{graphings}. These arose out of group theory, and have since been
studied in detail -- see~\cite{Lo}. It may be interesting to explore the
connections to labelled graph limits, akin to the discussion in the
present work of the features common with the graphon theory.
\end{remark}
%}}}

%{{{1 Section 2.2 - Metrics on graph space
\subsection{Metrics on graph space}

Some natural questions that now arise are if the aforementioned topology
on graph space $\G(V)$ is metrizable, or for which vertex sets $V$ does
every sequence of graphs possess a convergent subsequence. The following
result answers these questions.

\begin{prop}\label{Pseqcpt}
If $V$ is countable, then every sequence $\{ G_n : n \in \N \}$ in
$\G(V)$ has a convergent subsequence (with the above definition). If $V$
has cardinality at least that of the continuum, then this statement is
false. In particular, the product topology here is not metrizable.
\end{prop}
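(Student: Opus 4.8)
The plan is to prove the three assertions in turn, all via the fact that $\G(V) = (\Z/2\Z)^{K_V}$ carries the product topology, so everything reduces to standard facts about products of the two-point space, applied to the index set $K_V$ (whose cardinality matches that of $V$ when $V$ is infinite).

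First, for the positive statement: when $V$ is countable, $K_V$ is countable, so $\G(V) = \{0,1\}^{K_V}$ is a countable product of the finite discrete space $\{0,1\}$. I would give the direct diagonal argument rather than invoke Tychonoff plus metrizability: enumerate $K_V = \{e_1, e_2, \dots\}$, and given a sequence $G_n$, successively extract nested subsequences along which ${\bf 1}_{e_k \in G_n}$ is eventually constant (possible since each such indicator takes only two values), then take the diagonal subsequence. By Lemma \ref{Lconv}(4) (eventual constancy on every finite edge set), or directly by Definition \ref{D2}, the diagonal subsequence converges. This also shows the topology is metrizable in the countable case, e.g.\ via $d(G, G') := \sum_k 2^{-k} {\bf 1}_{e_k \in G \,\Delta\, G'}$, though that is not needed here.

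Next, the negative statement when $|V| \geq \mathfrak{c}$. Since $V$ is infinite, $|K_V| = |V| \geq \mathfrak{c}$, so $\{0,1\}^{K_V}$ surjects continuously (by restriction of coordinates) onto $\{0,1\}^{\mathfrak{c}} = \{0,1\}^{\R}$; it therefore suffices to produce a sequence in $\{0,1\}^{\R}$ with no convergent subsequence and pull it back. The classical example: index coordinates by real numbers, and let the $n$-th point be the function $r \mapsto$ the $n$-th binary digit of $r$ (for $r \in [0,1]$, say, and arbitrary elsewhere). Given any subsequence $n_1 < n_2 < \cdots$, choose $r \in [0,1]$ whose $n_j$-th binary digit is $j \bmod 2$; then the $r$-coordinate of the subsequence is $0,1,0,1,\dots$, which does not converge, so by Definition \ref{D2} the subsequence does not converge in $\G(V)$. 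Pulling this back along an injection $\R \hookrightarrow K_V$ (extending arbitrarily on the remaining coordinates) gives the desired sequence in $\G(V)$.

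Finally, non-metrizability when $|V| \geq \mathfrak{c}$ is immediate from the first two parts together with $\G(V)$ being compact (Proposition \ref{Ttop}): a compact metric space is sequentially compact, so failure of sequential compactness forces non-metrizability. (Alternatively one can cite that a product of more than countably many nontrivial spaces is never first countable, hence never metrizable, but the compactness route is cleanest given what is already established.) I do not anticipate a genuine obstacle here; the only point requiring a little care is the construction of the digit-sequence counterexample and the verification that the continuous restriction map lets one transport ``no convergent subsequence'' from $\{0,1\}^{\R}$ up to $\{0,1\}^{K_V}$ — this is where one must be slightly careful that a convergent subsequence upstairs would project to a convergent subsequence downstairs, which holds because coordinate projections are continuous.
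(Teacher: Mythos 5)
Your proposal is correct and follows essentially the same route as the paper: a diagonalization (equivalently, countable products of sequentially compact spaces are sequentially compact) for the countable case, a continuum-indexed family of coordinates on which any subsequence can be made to alternate $0,1,0,1,\dots$ for the negative case, and compactness plus ``compact metric $\Rightarrow$ sequentially compact'' for non-metrizability. Your binary-digit construction is the paper's construction in disguise --- the paper indexes coordinates by subsets $I \subset \N$ with the $e_{\pi(I)}$-coordinate of $G_n$ equal to ${\bf 1}_{n \in I}$, which is exactly the digit-indicator correspondence --- so there is nothing further to compare.
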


\noindent If the product topology is metrizable when $V$ is countable (as
it is not when $|V| \geq |\R|$), then the first assertion above is a
consequence of the fact that compactness is equivalent to sequential
compactness. We show below that this is indeed the case.

\begin{proof}
If $V$ is countable, then $\G(V)$ is a countable product of sequentially
compact spaces $\Z / 2 \Z = \{ 0, 1 \}$ and is thus sequentially compact.
%%%%%%%%%%%%%%%%%%%
\comment{
(The proof goes via a standard ``diagonalization'' argument, e.g., as in
the Helly-Bray Lemma).

Suppose $V$ is countable; then so is the edge set of $K_V$. Label the
edges $e_1, e_2, \dots$ according to some fixed bijection $: K_V \to \N$.
Now given any sequence $G_n$ in $\G(V)$, define inductively the
decreasing subsets $\{ m_{ij} : j \in \N \}$ for each $i \in \N$ as
follows: $m_{1j} := j$, and given $\{ m_{ij} : j \in \N \}$, consider the
indicator sequence ${\bf 1}_{e_i \in G_{m_{ij}} }$. By the pigeonhole
principle, this sequence equals either 0 or 1 (or both) infinitely often.
Choose an output of infinite occurrence - and suppose that it occurs for
$j = j_1 < j_2 < \dots$. Now define $m_{i+1,k} := m_{i j_k}$ for all $k
\in \N$.

Having defined $m_{ij}$, we claim that the sequence $\{ G_{m_{ii}} : i
\in \N \}$ is a convergent subsequence. This is clear because for every
$n$, the indicator sequence ${\bf 1}_{e_n \in G_{m_{ii}} }$ is constant
for all $i > n$.
}
%%%%%%%%%%%%%%%%%%%
On the other hand, suppose $|V| \geq |\R|$; then $V$ is infinite, so $|V|
= |K_V|$. Moreover, by assumption there exists an injection $\pi :
\calp(\N) \hookrightarrow K_V$ from the set of all subsets of $\N$ to
the set of all edges in $K_V$, since $|\calp(\N)| = |\R| \leq |V| =
|K_V|$. Now consider the sequence of graphs $G_n \in \G(V)$, where $G_n$
consists of the edges
\[ G_n = \{ e_{\pi(I)} : I \subset \N, n \in I \} \subset K_V. \]

\noindent We claim that $\{ G_n : n \in \N \}$ has no convergent
subsequence. To show the claim, given $\{ G_{n_k} : k \in \N \}$ with
$n_1 < n_2 < \cdots$, define $I := \{ n_{2k} : k \in \N \}$. Then ${\bf
1}_{e_{\pi(I)} \in G_{n_k}} = 0, 1, 0, 1, \dots$, which is not eventually
constant.

Finally, if there exists a metric on $\G(V)$ when $|V| \geq |\R|$ such
that the induced topology is the product topology, then $\G(V)$ would be
compact, hence sequentially compact, which is false.
\end{proof}

Henceforth we mostly focus on the case when $V$ is countable. In this
case, we say that graphs $G \in \G(V)$ are \textit{countable graphs}.
Such graphs form the focus of the present paper because by Lemma
\ref{Lconv}, the set of countable graphs agrees exactly with the set of
limits of sequences of finite graphs (i.e., graphs with finitely many
edges).

Similar to the graphon case, we now discuss how to metrize graph
convergence in $\G(V)$ for countable $V$.

\begin{definition}
Suppose $V$ is countable. Define $\ell^1_+(K_V)$ to be the set of all
maps $\varphi : K_V \to (0,\infty)$ such that $\sum_{e \in K_V}
\varphi(e)$ is finite. Given $\varphi \in \ell^1_+(K_V)$, define
$d_\varphi : \G(V) \times \G(V) \to \R$ via:
\[ d_\varphi(G_1, G_2) = \sum_{e \in G_1 \Delta G_2} \varphi(e). \]
\end{definition}

Recall \cite{AKM} that when $G_1, G_2$ are finite, the \textit{edit
distance} or \textit{Hamming distance} is defined to be the cardinality
$|G_1 \Delta G_2|$. Since the graphs in the space of interest $\G(V)$ are
countable, it is natural to work with weighted variants of the Hamming
distance. This explains the decision to work with the functions in
$\ell_1^+(K_V)$. Now the following result follows from standard
arguments.

\begin{prop}\label{Pnorms}
Suppose $V$ is countable. Then for all $\varphi \in \ell^1_+(K_V)$ the
maps $d_\varphi$ are translation-invariant metrics which metrize the
product topology on $\G(V)$.
\end{prop}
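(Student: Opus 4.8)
The plan is to verify the metric axioms and translation invariance directly (these are routine, using only that $\varphi$ is strictly positive and summable) and then to identify the metric topology with the product topology by invoking the compactness established in Proposition \ref{Ttop}.

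First I would note that $d_\varphi$ is genuinely real-valued: the terms $\varphi(e)$ are positive and $\sum_{e \in K_V}\varphi(e) < \infty$, so the series $\sum_{e \in G_1 \Delta G_2}\varphi(e)$ converges (absolutely) for every pair $G_1, G_2$. Symmetry is immediate from $G_1 \Delta G_2 = G_2 \Delta G_1$, and nonnegativity is clear; that $d_\varphi(G_1,G_2) = 0$ forces $G_1 = G_2$ uses that $\varphi$ is \emph{strictly} positive, so $d_\varphi(G_1,G_2)=0$ implies $G_1 \Delta G_2 = \emptyset$. The triangle inequality follows from the inclusion $G_1 \Delta G_3 \subseteq (G_1 \Delta G_2) \cup (G_2 \Delta G_3)$ together with $\varphi \geq 0$. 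Translation invariance is the identity $(G_1 + H)\Delta(G_2 + H) = G_1 \Delta G_2$ in $(\Z/2\Z)^{K_V}$ (recall $+ = \Delta$), which gives $d_\varphi(G_1 + H, G_2 + H) = d_\varphi(G_1, G_2)$ for every $H \in \G(V)$.

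The substantive step is showing that the topology $\tau_\varphi$ induced by $d_\varphi$ equals the product topology $\tau_{\mathrm{prod}}$. It suffices to prove the one-sided inclusion $\tau_\varphi \subseteq \tau_{\mathrm{prod}}$, i.e.\ that the identity map $(\G(V),\tau_{\mathrm{prod}}) \to (\G(V),\tau_\varphi)$ is continuous: then it is a continuous bijection from the compact space $(\G(V),\tau_{\mathrm{prod}})$ (Proposition \ref{Ttop}) onto the Hausdorff space $(\G(V),\tau_\varphi)$, hence a homeomorphism. For continuity, fix $G \in \G(V)$ and $\epsilon > 0$; since $\sum_{e \in K_V}\varphi(e) < \infty$, choose a finite set $E_0 \subset K_V$ with $\sum_{e \in K_V \setminus E_0}\varphi(e) < \epsilon$. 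The basic product-open cylinder $\{ G' \in \G(V) : G' \cap E_0 = G \cap E_0 \}$ is a $\tau_{\mathrm{prod}}$-neighbourhood of $G$, and any $G'$ in it satisfies $G' \Delta G \subseteq K_V \setminus E_0$, whence $d_\varphi(G',G) \leq \sum_{e \in K_V \setminus E_0}\varphi(e) < \epsilon$. Thus this cylinder lies inside the $d_\varphi$-ball of radius $\epsilon$ about $G$, giving the desired continuity.

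I do not anticipate a real obstacle here; the only delicate point is the choice of the finite set $E_0$ carrying all but $\epsilon$ of the total $\varphi$-mass — precisely where summability of $\varphi$ is used — together with the appeal to compactness to upgrade the one-sided comparison of topologies to an equality. As an alternative to the compactness argument, one can work directly with sequences: $d_\varphi(G_n,G) \to 0$ if and only if $\mathbf{1}_{e \in G_n} \to \mathbf{1}_{e \in G}$ for every $e \in K_V$, using $\varphi(e)\mathbf{1}_{e \in G_n \Delta G} \leq d_\varphi(G_n,G)$ for one direction and the same choice of $E_0$ (invoking Lemma \ref{Lconv}(4)) for the other; since $V$ is countable, $\tau_{\mathrm{prod}}$ is metrizable and so is determined by its convergent sequences, which then yields $\tau_\varphi = \tau_{\mathrm{prod}}$.
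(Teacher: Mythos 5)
Your proof is correct, and it is precisely the ``standard argument'' the paper alludes to without writing out: the metric axioms and translation invariance follow from strict positivity and summability of $\varphi$ together with $(G_1+H)\Delta(G_2+H)=G_1\Delta G_2$, and the identification of topologies comes from the one-sided continuity estimate via a finite set $E_0$ carrying all but $\epsilon$ of the $\varphi$-mass, upgraded to a homeomorphism by compactness of the product (or, equivalently, by the sequential argument). Nothing is missing.
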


\noindent In particular, all metrics $d_\varphi$ are topologically
equivalent on $\G(V)$. Note also that in defining and studying the
metrics $d_\varphi$, we do not use any ordering on the vertices or the
edges of $K_V$, either explicitly or implicitly. This is because all of
the metrics are topologically equivalent, so that the actual choice of
labelling on the graphs in $\G(V)$ does not affect the underlying
topology on $\G(V)$. In fact there is a larger family of metrics on graph
space which are topologically equivalent; see Proposition \ref{Pmixed}
below.

\begin{remark}
We add for completeness that in the present paper we do not consider
graphs with self-loops; however, our model of graph space $\G(V)$ can
easily be amended to consider such graphs, by using the countable edge
set of $\overline{K}_V$, the complete graph on $V$ that also includes
self-loops. Then the results in this paper also hold in the new model of
graph space $\overline{\G}(V) = 2^{\overline{K}(V)}$ as well, since it is
once again a compact metric group (isomorphic to $\G(V)$).
\end{remark}
%}}}

%{{{1 Section 2.3 - Homomorphism indicators
\subsection{Homomorphism indicators}\label{Shomind}

In the analysis of unlabelled graphs and their limits, homomorphism
densities play a fundamental and important role. In particular,
homomorphism densities provide a characterization of convergence of graph
sequences called left convergence. We now introduce an analogous family
of functions in the labelled setting and show how it can be used to
characterize graph convergence. We also prove other results for this
family, parallel to results for hom-densities in the graphon literature.
We work here with graphs that need not be countable.

\begin{definition}
Given labelled graphs $H,G \in \G(V)$ on an arbitrary labelled vertex set
$V$, define the {\it injective homomorphism indicator} $t'_{\inj}(H,G)$
to be the indicator of the event that $H$ occurs as a subgraph of $G$.
Similarly define the {\it induced homomorphism indicator}
$t'_{\ind}(H,G)$ to be the indicator of the event that $H$ occurs as an
induced subgraph of $G$.

A sequence of countable graphs $G_n \in \G(V)$ is said to {\it left
converge} (to a graph $G \in \G(V)$) if the corresponding sequences of
injective homomorphism indicators $t'_{\inj}(H,G_n)$ converges (to
$t'_{\inj}(H,G)$) for all finite graphs $H \in \G_0(V)$.
\end{definition}

One can similarly define a notion of graph limits using the induced
homomorphism indicators. It is now natural to ask if graph convergence
can be encoded using either of these families of homomorphism indicators.
The following result provides a positive answer to the question.

\begin{lemma}[Inclusion-exclusion and left-convergence]
Given any labelled vertex set $V$ and a finite graph $H \in \G_0(V)$, the
induced and injective homomorphism indicators from $H$ are related as
follows:
\begin{equation}\label{Einclexcl}
t'_{\inj}(H,G) = \sum_{H \subset H' \subset K_{V(H)}} t'_{\ind}(H',G),
\qquad t'_{\ind}(H,G) = \sum_{H \subset H' \subset K_{V(H)}}
(-1)^{|E(H' \setminus H)|} t'_{\inj}(H',G),
\end{equation}

\noindent for all $G \in \G(V)$. (Here $V(H)$ denotes the non-isolated
nodes of $H$.)
Moreover, the topologies induced by left-convergence and by the
convergence of the induced homomorphism indicators, both coincide with
the Hausdorff product topology on $\G(V)$.

Furthermore, for any left-convergent sequence of graphs on an arbitrary
labelled vertex set $V$, there exists a (unique) limiting object in
$\G(V)$.
\end{lemma}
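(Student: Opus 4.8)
The plan is to prove the three claims of the lemma in sequence, treating the inclusion-exclusion identities \eqref{Einclexcl} as the combinatorial engine that drives the topological statements. For the first identity, I would fix a finite graph $H \in \G_0(V)$ and an arbitrary $G \in \G(V)$, and observe that a copy of $H$ as a (not-necessarily-induced) subgraph of $G$ on a given injective placement of $V(H)$ into $V$ is precisely a copy of some $H'$ with $H \subset H' \subset K_{V(H)}$ as an induced subgraph. Since $t'_{\inj}$ and $t'_{\ind}$ are $\{0,1\}$-valued indicators of the \emph{existence} of such a placement (not counts), I must be slightly careful: the first identity in \eqref{Einclexcl} should be read as a Boolean ``or'' over the $H'$, and the second as Möbius inversion over the Boolean lattice $[H, K_{V(H)}]$. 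Concretely, for the second identity I would apply the standard Möbius function $(-1)^{|E(H'\setminus H)|}$ for the subset lattice and verify the alternating sum collapses to the induced indicator; this is the one spot where I want to double-check that the arithmetic in $\Z$ (not $\Z/2\Z$) is what is intended, since the right-hand sides are signed sums.

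Next, for the topological equivalence, I would argue that each $t'_{\inj}(H,\cdot)$ and each $t'_{\ind}(H,\cdot)$ is continuous on $\G(V)$ for $H \in \G_0(V)$. This is because whether $H$ embeds (as a subgraph or induced subgraph) into $G$ depends only on finitely many edges of $G$ at a time in the following sense: for induced embeddings on a fixed placement, it depends on exactly $\binom{|V(H)|}{2}$ coordinates, and $t'_{\ind}(H,\cdot)$ is a (possibly infinite, for infinite $V$) supremum of such finitely-determined indicators — but by Lemma \ref{Lconv}(4), convergence $G_n \to G$ in the product topology means every finite edge-set is eventually constant, so in particular each finitely-determined indicator is eventually constant, and a routine argument shows $t'_{\ind}(H,G_n) \to t'_{\ind}(H,G)$. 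Hence the product topology refines both the left-convergence topology and the induced-indicator topology. Conversely, to see these topologies refine the product topology, it suffices to recover each coordinate indicator ${\bf 1}_{e \in G}$ from the homomorphism indicators: taking $H$ to be the single edge $e$ (on its two endpoints), $t'_{\inj}(H,G) = {\bf 1}_{e \in G}$ directly, so convergence of all injective indicators forces coordinatewise convergence; and via the inclusion-exclusion relation \eqref{Einclexcl} the induced indicators determine the injective ones and vice versa, so all three topologies coincide.

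Finally, the existence (and uniqueness) of a limiting object for a left-convergent sequence follows by combining the previous paragraph with compactness. Given a left-convergent sequence $G_n$, the sequence of functions $H \mapsto t'_{\inj}(H,G_n)$ converges pointwise on $\G_0(V)$; in particular (taking single-edge $H$) the coordinatewise limits ${\bf 1}_{e \in G_n}$ all exist, which defines a candidate graph $G \in \G(V)$, and then $G_n \to G$ in the product topology by Definition \ref{D2}. By the continuity established above, $t'_{\inj}(H,G_n) \to t'_{\inj}(H,G)$ for all finite $H$, so $G$ is indeed the left-limit; uniqueness is immediate since the single-edge indicators already pin down $G$ (alternatively, invoke Lemma \ref{Lconv}(1)). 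I would note that here one does \emph{not} even need $V$ countable or the sequential compactness of Proposition \ref{Pseqcpt} — the limit is constructed directly coordinate by coordinate — which is why the statement holds for arbitrary labelled $V$. The main obstacle I anticipate is purely bookkeeping: being scrupulous about the distinction between homomorphism \emph{indicators} (Boolean) and homomorphism \emph{counts/densities} (numerical), so that the signed sum in \eqref{Einclexcl} is justified, and handling the supremum-over-placements carefully when $V$ is infinite so that continuity of $t'_{\ind}(H,\cdot)$ genuinely follows from the finite-edge-set criterion of Lemma \ref{Lconv}(4).
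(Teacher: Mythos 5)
Your overall route---M\"obius inversion on the interval $[H,K_{V(H)}]$ for \eqref{Einclexcl}, matching the topologies by recovering the coordinate functions ${\bf 1}_{e \in G}$ from single-edge indicators, and building the limit of a left-convergent sequence coordinate by coordinate (so that no countability of $V$ is needed)---is exactly the paper's. But there is one genuine conceptual error: the ``injective placements'' of $V(H)$ into $V$. In this labelled setting there are no placements to choose: $H$ and $G$ live on the \emph{same} labelled vertex set, so $t'_{\inj}(H,G)$ is simply ${\bf 1}_{E(H) \subset E(G)} = \prod_{e \in H} {\bf 1}_{e \in G}$, and $t'_{\ind}(H,G) = {\bf 1}_{G \cap K_{V(H)} = H}$. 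Two of your steps depend on getting this right. First, the first identity in \eqref{Einclexcl} is a genuine arithmetic identity, not a Boolean ``or'': every $H'$ with $H \subset H' \subset K_{V(H)}$ has $V(H') = V(H)$, so $t'_{\ind}(H',G) = \delta_{H',\,G \cap K_{V(H)}}$, at most one summand is nonzero, and the sum equals ${\bf 1}_{H \subset G \cap K_{V(H)}} = t'_{\inj}(H,G)$. Reading it as an ``or'' would actually block the M\"obius inversion you then invoke, since inversion needs the additive form; your fallback of verifying the alternating sum directly does work, but only because the sum is additive in the first place.

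Second, and more seriously, your description of $t'_{\ind}(H,\cdot)$ as a possibly infinite supremum of finitely-determined indicators would, if it reflected the definition, leave a real gap in the continuity argument: an infinite supremum of locally constant $\{0,1\}$-valued functions on $\G(V)$ need not be continuous (the paper's Counting Lemma, Theorem \ref{Tcounting}, shows precisely that indicators depending on infinitely many edges are never continuous), and Lemma \ref{Lconv}(4) gives eventual constancy of each term but not of the supremum, so the ``routine argument'' would fail as stated. Under the correct reading each indicator depends only on the finitely many coordinates in $K_{V(H)}$, hence is locally constant, and continuity is immediate. With this correction the remaining steps---coordinatewise convergence from single-edge indicators, the coordinatewise construction of the limiting graph, and uniqueness---are correct and coincide with the paper's proof.
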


\noindent Consequently, for countable $V$, a sequence of graphs is
left-convergent if and only if it is a Cauchy sequence in the $d_\varphi$
metric for any $\varphi \in \ell^1_+(K_V)$.

\begin{proof}
We first show Equation \eqref{Einclexcl}. Define $H'_G := G \cap
K_{V(H)}$ for $G \in \G(V)$. Then $t'_{\ind}(H',G) = \delta_{H',H'_G}$
for all $H \subset H' \subset K_{V(H)}$.
The first equality in Equation \eqref{Einclexcl} now follows, and from it
the second equality is deduced by M\"obius inversion in the poset of
subgraphs of $K_{V(H)}$.

In particular, it follows from Equation \eqref{Einclexcl} that the
topologies induced by the two families of homomorphism indicators
coincide. Now if $H$ is the graph with precisely one edge $e$, then
$t'(e,G) = {\bf 1}_{e \in G}$. More generally, for any finite graph $H
\in \G_0(V)$,
\begin{equation}\label{Ehomind}
t'_{\inj}(H,G) = \prod_{e \in H} t'_{\inj}(e,G) = \prod_{e \in H} {\bf
1}_{e \in G}, \qquad
t'_{\ind}(H,G) = \prod_{e \in H} {\bf 1}_{e \in G} \prod_{e \in K_{V(H)}
\setminus H} {\bf 1}_{e \notin G}.
\end{equation}

\noindent It follows that the topology of left convergence agrees with
coordinate-wise convergence, i.e., with the product topology. To show the
final assertion, define the limiting object to have $e \in K_V$ as an
edge if and only if the indicator sequence ${\bf 1}_{e \in G_n}$ is
eventually $1$.
\end{proof}

We now prove certain fundamental properties of homomorphism indicators.
We term the following result as the {\it Counting Lemma} for countable
labelled graphs, given its similarity to the Counting Lemma for graphons
\cite{Lo}, which says that homomorphism densities are Lipschitz functions
with respect to the cut-norm.

\begin{theorem}[Counting Lemma for labelled graphs]\label{Tcounting}
Suppose $V$ is any set, and $I_0, I_1 \subset K_V$ are disjoint. Let
$f_{I_0, I_1} : \G(V) \to \{ 0, 1 \}$ be the indicator of the event that
the edges in $I_0$ are not in a graph, while the edges in $I_1$ are. Then
the following are equivalent:
\begin{enumerate}
\item $f_{I_0,I_1}$ is locally constant.

\item $f_{I_0,I_1}$ is continuous.

\item $I_0 \sqcup I_1$ is finite.
\end{enumerate}

\noindent If $V$ is countable and $\varphi \in \ell^1_+(K_V)$ induces the
translation-invariant metric $d_\varphi$ on $\G(V)$, then the above
conditions are also equivalent to:
\begin{enumerate}
\setcounter{enumi}{3}
\item $f_{I_0,I_1} : (\G(V), d_\varphi) \to \R$ is Lipschitz (for any
$\varphi$).
\end{enumerate}

\noindent In this case $f_{I_0, I_1}$ has ``best'' possible Lipschitz
constant equal to $1 / \min_{e \in I_0 \sqcup I_1} \varphi(e)$.
\end{theorem}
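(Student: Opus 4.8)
The plan is to prove the cyclic chain of implications $(1)\Rightarrow(2)\Rightarrow(3)\Rightarrow(1)$ among the first three conditions, and then to handle $(4)$ separately in the countable case by showing $(3)\Rightarrow(4)$ and $(4)\Rightarrow(2)$, together with the sharp Lipschitz constant. The implication $(1)\Rightarrow(2)$ is immediate since a locally constant function on any topological space is continuous. The implication $(3)\Rightarrow(1)$ is also essentially formal: if $E_0 := I_0 \coprod I_1$ is finite, then by Lemma \ref{Lconv}(4) the value of $f_{I_0,I_1}$ at a graph $G$ depends only on $E_0 \cap G$, so the open cylinder $\{G' : E_0 \cap G' = E_0 \cap G\}$ is a neighborhood of $G$ on which $f_{I_0,I_1}$ is constant. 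This is exactly the description of the subbasic open sets in the product topology used in the proof of Proposition \ref{Ttop}.

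The interesting implication is $(2)\Rightarrow(3)$, which I would prove by contrapositive: assuming $I_0 \coprod I_1$ is infinite, I would construct a sequence of graphs converging to some limit graph along which $f_{I_0,I_1}$ fails to converge to the right value. Pick a countably infinite sequence of distinct edges $e_1, e_2, \dots$ inside $I_0 \coprod I_1$. Take the limiting graph $G$ to be, say, $I_1$ itself (so $f_{I_0,I_1}(G) = 1$, using that $I_0, I_1$ are disjoint), and define $G_n$ to agree with $G$ on all edges except that $G_n$ ``flips'' the membership of $e_n$: remove $e_n$ if $e_n \in I_1$, or add $e_n$ if $e_n \in I_0$. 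Since any fixed edge is flipped for at most one value of $n$, Lemma \ref{Lconv}(4) gives $G_n \to G$; but $f_{I_0,I_1}(G_n) = 0$ for every $n$ by construction, so $f_{I_0,I_1}$ is not continuous at $G$. This is the main obstacle only in the mild sense that one has to choose the flipped-edge construction carefully so that convergence still holds; the argument is a direct analogue of the non-convergent-subsequence construction in the proof of Proposition \ref{Pseqcpt}.

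For the countable case, $(4)\Rightarrow(2)$ is trivial since Lipschitz maps are continuous. For $(3)\Rightarrow(4)$, suppose $E_0 := I_0 \coprod I_1$ is finite and let $m := \min_{e \in E_0} \varphi(e) > 0$. If $f_{I_0,I_1}(G_1) \neq f_{I_0,I_1}(G_2)$, then the two graphs disagree on the required membership of at least one edge $e \in E_0$, hence $e \in G_1 \Delta G_2$, so $d_\varphi(G_1,G_2) \geq \varphi(e) \geq m$; thus $|f_{I_0,I_1}(G_1) - f_{I_0,I_1}(G_2)| \leq 1 \leq d_\varphi(G_1,G_2)/m$, giving the Lipschitz bound with constant $1/m$. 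For sharpness, fix an edge $e^\ast \in E_0$ attaining the minimum $\varphi(e^\ast) = m$, and take two graphs $G_1, G_2$ that both satisfy the membership conditions of $f_{I_0,I_1}$ except that exactly one of them flips $e^\ast$ (arranging the other edges of $E_0$ to be as prescribed by $I_0, I_1$ in $G_1$, and copying them in $G_2$); then $d_\varphi(G_1,G_2) = m$ while the function values differ by $1$, so no Lipschitz constant smaller than $1/m$ works. This completes all the equivalences and the final sharpness claim.
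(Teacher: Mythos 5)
Your proof is correct and follows essentially the same route as the paper's: the product-topology argument for $(3)\Rightarrow(1)\Rightarrow(2)$, an explicit sequence witnessing discontinuity at a limit graph when $I_0 \coprod I_1$ is infinite, and the same lower bound $d_\varphi(G_1,G_2) \geq \min_{e \in I_0 \coprod I_1}\varphi(e)$ whenever the function values differ. Your single-edge-flip sequence converging to $I_1$ and your sharpness witness $G_2 = G_1 \Delta \{e^\ast\}$ with $f_{I_0,I_1}(G_1)=1$ are only minor variants of the paper's constructions (and the latter is in fact slightly more careful, since the paper's pair $G = \emptyset$, $G' = \{e'\}$ need not have function values differing by $1$ when $I_1 \neq \emptyset$).
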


\noindent In particular, the Counting Lemma holds for all
induced and injective homomorphism indicators.

\begin{proof}
That $(3) \implies (1)$ follows by considering the product topology, and
$(1) \implies (2)$ because the cylinder open sets in the product topology
of $\G(V)$ are also closed. We now assume that $I_0 \sqcup I_1$ is
infinite and show that $(2)$ fails. Suppose $I_1$ is infinite (the proof
is similar for $I_0$ infinite). Then $I_1$ contains a countable set $I'_1
: \{ (v_{i_n}, v_{j_n}) : n \in \N \}$ for some vertices $v_{i_n} \neq
v_{j_n}$. Define $G_n := (K_V \setminus (I_0 \sqcup I'_1)) \sqcup \{
(v_{i_1}, v_{j_1}), \dots, (v_{i_n}, v_{j_n}) \}$ for all $n$. Then it is
clear that $G_n \to K_V \setminus I_0$. However, $f_{I_0,I_1}(G_n) = 0$
for all $n$ while $f_{I_0,I_1}(K_V \setminus I_0) = 1$. Therefore
$f_{I_0,I_1}$ is not continuous at $K_V \setminus I_0$ and (2) fails to
hold. This proves that (1)--(3) are equivalent for any labelled vertex
set $V$.

Now suppose $V$ is countable. Then clearly $(4) \implies (2)$. Conversely
suppose (3) holds, and $G,G' \in \G(V)$. Then $|f_{I_0,I_1}(G) -
f_{I_0,I_1}(G')|$ is either $0$ or $1$, so to show that $f_{I_0,I_1}$ is
Lipschitz we only need to consider the pairs of graphs $G,G'$ for which
the above difference is $1$. But this implies that at least one of the
indicators $\{ {\bf 1}_{e \in H} : e \in I_0 \sqcup I_1 \}$ attains
distinct values for $H = G,G'$. In particular, $G \Delta G'$ has nonempty
intersection with the finite subset $I_0 \sqcup I_1$ of $K_V$. Now set
$c_I := \min_{e \in I} \varphi(e)$ for all $I \subset K_V$. Then
$d_\varphi(G,G') \geq c_{I_0 \sqcup I_1}$, whence
\[ |f_{I_0,I_1}(G) - f_{I_0,I_1}(G')| = 1 \leq \frac{1}{c_{I_0 \sqcup
I_1}} d_\varphi(G,G'). \]

\noindent The same inequality clearly holds if $|f_{I_0,I_1}(G) -
f_{I_0,I_1}(G')| = 0$, which proves (4) as desired. Finally, that
$1 / c_{I_0 \sqcup I_1}$ is the best possible Lipschitz constant follows
by considering $G = \emptyset$ and $G' = \{ e' \}$, where $e'$ is any
edge which minimizes $\varphi$ over $I_0 \sqcup I_1$.
\end{proof}

\begin{remark}
Note that one can also formulate a variant of the Inverse Counting Lemma
for countable labelled graphs; however, this is obvious for $\G(V)$. This
variant states that if for two graphs $G_1,G_2 \in \G(V)$ the difference
of the homomorphism indicators $|t'(e,G_1) - t'(e,G_2)|$ is smaller than
$1$ for all edges $e \in K_V$, then $G_1 = G_2$. One can use either
induced or injective homomorphism indicators in this case, because they
agree on all finite graphs which are complete.
\end{remark}

We end this part with two further properties that are satisfied by the
homomorphism indicators in the labelled setting (as also by the
hom-densities in the unlabelled setting \cite[Section~2]{DGKR}).

\begin{prop}[Stone--Weierstrass; Lagrange Interpolation]
Suppose $V$ is any labelled vertex set.
The linear span of homomorphism indicators of all finite graphs is dense
in the space of continuous real-valued functions on $\G(V)$.

Moreover, given graphs $G_1, \cdots, G_k \in \G(V)$ and real numbers
$a_1, \cdots, a_k$, there exists a finite set of finite graphs $H_1,
\dots, H_m \in \G_0(V)$ and constants $c_i \in \R$ such that
$\sum_{i=1}^m c_i t'(H_i, G_j) = a_j$ for all $1 \leq j \leq k$.
\end{prop}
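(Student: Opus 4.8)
The plan is to prove the two assertions in turn, deriving the Lagrange interpolation statement as a consequence of (the method behind) the Stone--Weierstrass statement.

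For the density assertion, I would invoke the classical Stone--Weierstrass theorem for the compact Hausdorff space $\G(V)$ (compactness comes from Proposition \ref{Ttop}). The subalgebra $\A$ generated by the homomorphism indicators $t'_{\inj}(H,G)$ over all finite $H \in \G_0(V)$ contains the constant function $1$ (take $H = {\bf 0}$, the empty graph), so it suffices to check that $\A$ separates points. Given $G_1 \neq G_2$ in $\G(V)$, there is an edge $e \in G_1 \Delta G_2$; then $t'_{\inj}(e, \cdot)$ is exactly the indicator ${\bf 1}_{e \in \cdot}$ by \eqref{Ehomind}, and it takes distinct values $0$ and $1$ on $G_1$ and $G_2$. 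Each such indicator is continuous by Theorem \ref{Tcounting} (the event $\{e \in G\}$ is $f_{\emptyset, \{e\}}$, with $I_0 \coprod I_1$ finite), and products of continuous functions are continuous, so $\A \subset C(\G(V), \R)$. Stone--Weierstrass then gives that $\A$ is dense; but by \eqref{Ehomind} every element of $\A$ is a $\R$-linear combination of injective homomorphism indicators $t'_{\inj}(H, \cdot)$ of finite graphs $H$ (products of the edge-indicators $t'_{\inj}(e,\cdot)$ are themselves such indicators, since $t'_{\inj}(H,G) = \prod_{e \in H} {\bf 1}_{e \in G}$), so the linear span of homomorphism indicators is already dense. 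The same conclusion for induced indicators follows from the inclusion-exclusion relations \eqref{Einclexcl}, which show the two linear spans coincide.

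For the interpolation assertion, the key observation is that it only involves the finitely many graphs $G_1, \dots, G_k$, and these are distinguished from one another by finitely many edges. Concretely, I would pick a finite edge set $E_0 \subset K_V$ large enough that the restrictions $G_j \cap E_0$ are pairwise distinct for distinct $j$ whenever $G_j \neq G_{j'}$ — such $E_0$ exists since $G_j \Delta G_{j'}$ is nonempty and one may choose one edge from each of the finitely many symmetric differences. (If some $G_j = G_{j'}$ but $a_j \neq a_{j'}$, no solution exists and the hypothesis implicitly rules this out; I would note that the natural reading is that $a_j$ depends only on $G_j$.) For each $j$, let $H_j$ be the induced subgraph $G_j \cap E_0$, viewed as a finite graph. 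Then $t'_{\ind}(H_j \cap E_0, G_{j'}) = \delta_{jj'}$ when the $H_j$ are distinct, using the computation $t'_{\ind}(H', G) = \delta_{H', G \cap K_{V(H)}}$ from the proof of Lemma \ref{Einclexcl} restricted to $E_0$; so $\sum_j a_j t'_{\ind}(H_j, \cdot)$ evaluates to $a_{j'}$ on $G_{j'}$. Finally, converting these induced indicators to injective ones via \eqref{Einclexcl} (or simply allowing induced indicators, which the two-family equivalence permits) yields the desired finite set $H_1, \dots, H_m$ and constants $c_i$.

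I expect the main subtlety to be bookkeeping rather than any deep obstacle: in the interpolation part one must be careful that the induced-subgraph indicators $t'_{\ind}$ are taken relative to the \emph{same} vertex set (so that the Kronecker-delta orthogonality $t'_{\ind}(H_j, G_{j'}) = \delta_{jj'}$ genuinely holds), which is why working with a common finite edge set $E_0$ containing all the distinguishing edges is essential. A second minor point is handling possible coincidences among the $G_j$ — these force a compatibility condition on the $a_j$ which, as noted, must be assumed. In the Stone--Weierstrass part, the only thing to verify carefully is that products of homomorphism indicators remain homomorphism indicators (so that the generated subalgebra equals the linear span), which is immediate from the product formula \eqref{Ehomind}.
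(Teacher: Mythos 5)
Your Stone--Weierstrass argument is essentially identical to the paper's: both invoke the classical theorem on the compact Hausdorff space $\G(V)$, note that the empty graph gives the constant $1$, that single-edge indicators separate points, and that the product formula \eqref{Ehomind} shows the linear span is already a subalgebra. For the interpolation part your idea is the same in spirit (build Lagrange basis functions out of distinguishing edges), but the paper's construction is more direct: it reduces to producing a function equal to $1$ at $G_1$ and $0$ at $G_2,\dots,G_k$, picks one edge $e_j \in G_1 \Delta G_j$ for each $j>1$, and writes down the product $\prod_{j=2}^k \bigl({\bf 1}_{e_j \in G_1}\, t'(e_j,-) + {\bf 1}_{e_j \in G_j}(1 - t'(e_j,-))\bigr)$, which expands by \eqref{Ehomind} into a finite linear combination of injective homomorphism indicators with no further bookkeeping. (Both proofs implicitly assume the $G_j$ are pairwise distinct, as you correctly note.)

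One technical slip in your version: the identity $t'_{\ind}(H_j, G_{j'}) = \delta_{jj'}$ with $H_j := G_j \cap E_0$ is not literally correct under the paper's definition, because $t'_{\ind}(H,G)$ tests whether $G \cap K_{V(H)} = H$, and $K_{V(H_j)}$ may contain edges outside your chosen set $E_0$. In particular $G_j$ could have an edge between two non-isolated vertices of $H_j$ that does not lie in $E_0$, which would force $t'_{\ind}(H_j, G_j) = 0$ rather than $1$. The fix is to work instead with the functions $f_{I_0,I_1}$ of Theorem \ref{Tcounting} with $I_0 \coprod I_1 = E_0$, i.e.\ $\prod_{e \in E_0 \cap G_j} {\bf 1}_{e \in \cdot} \prod_{e \in E_0 \setminus G_j} (1 - {\bf 1}_{e \in \cdot})$, which do satisfy the Kronecker-delta property and expand into the required linear combination of injective indicators; this is exactly what the paper's product achieves with a minimal choice of distinguishing edges. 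With that repair your argument is sound.
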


\noindent Note that both assertions involve the linear span of all
homomorphism indicators of finite graphs $\{ t'(H,-) : H \in \G_0(V) \}$;
thus, the result holds for both the injective and induced homomorphism
indicators, by Equation \eqref{Einclexcl}.

\begin{proof}
The first part follows from the usual Stone--Weierstrass Theorem since
the homomorphism indicator of the ``empty graph'' is the constant function
$1$, and the aforementioned linear span is indeed a subalgebra that
separates points (e.g., if $e \in G \Delta G'$ then $t'_{\ind}(e,-) =
t'_{\inj}(e,-)$ separates $G$ and $G'$).
For the second part, it suffices to demonstrate the existence of such
graphs $H_i$ and constants $c_i$ such that $a_1 = 1$ and $a_2 = \cdots =
a_k = 0$. To see why this holds, given any $1 < j$, choose an edge $e_j
\in G_1 \Delta G_j$. Then
\[ \prod_{j=2}^k ({\bf 1}_{e_j \in G_1} t'(e_j,-) + {\bf 1}_{e_j \in G_j}
(1 - t'(e_j,-))) \]

\noindent satisfies the given requirements.
\end{proof}
%}}}

\section{Differential calculus on countable graphs}\label{Sdiff}

We now come to the main goal of this paper: to establish a theory of
differential calculus on graph space $\G(V)$, and to prove results in
Newton--Leibnitz calculus such as an analogue of the First Derivative
Test, on $\G(V)$. We remark that a theory of differential calculus on
graphon space was established in a parallel paper \cite{DGKR} for
unlabelled graphs. Developing a calculus on labelled graph space $\G(V)$
also naturally follows in the progression of results developed, from
topology, to analysis of graph space, to a deeper study of functions
defined on $\G(V)$. In particular, akin to an application of differential
calculus on the real line, one would like to ask: given a ``score
function'' on graph space, is it possible to maximize or minimize it? We
now propose a mechanism to answer this question using a formalism akin to
the usual Newton--Leibnitz theory on $\R$.

%{{{1 Section 3.1 - A special family of metrics on graph space
\subsection{A special family of metrics on graph space}\label{Scompact}

The goal of this subsection is to identify a large subset of graph space
with a more familiar topological model. To this end, we introduce and
study a special family $\heart{\cdot}{a}$ of metrics on countable graph
space $\G(V)$. This family of metrics is important for several reasons:
(i)~it helps find a more familiar model for $\G(V)$,
(ii)~it plays a crucial role in the theory of differential calculus
developed in the present paper, and
(iii)~it will also be crucially used in a subsequent paper \cite{KR2} in
developing integration and probability theory on $\G(V)$.

\begin{definition}\label{Dheart}
Given a countable labelled vertex set $V$ and a bijection $\psi : K_V \to
\N$, $a>1$, and a graph $G = (V,E)$, define
\[ \heart{G}{a} := \sum_{e \in E} a^{-\psi(e)}, \qquad E_n(\psi) := \{ e
\in K_V : \psi(e) \leq n \}. \]

\noindent Also denote by $\bbd_2$ the set of dyadic rationals, i.e.,
rational numbers with a finite binary expansion.
\end{definition}

Note that $\heart{\cdot}{a}$ is precisely the map $d_{\varphi_a}({\bf
0},-)$, where $\varphi_a \in \ell^1_+(K_V)$ is defined via: $\varphi_a(G)
:= \sum_{e \in G} a^{-\psi(e)}$. Thus it too metrizes the product
topology on $\G(V)$, for all $a>1$.

We now study basic properties of the family $\heart{\cdot}{a}$ of metrics
on graph space. The first observation is that the metric
$\heart{\cdot}{a}$ satisfies an analogue of the Weak Regularity Lemma.
More precisely, a countable graph can be ``$\epsilon$-approximated'' by a
finite graph with $O(\log \epsilon^{-1})$ edges. The proof is
straightforward and hence omitted.

\begin{lemma}[Weak Regularity Lemma for labelled graphs]\label{Lapprox2}
Fix $a>1$ and $\epsilon > 0$. Given $G \in \G(V)$, there exists a graph
$G_0 \in \G_0(V)$ with at most $1 - \log(\epsilon(a-1))/\log(a)$ edges,
such that $\heart{G - G_0}{a} < \epsilon$.
\end{lemma}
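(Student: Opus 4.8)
The plan is to start from the binary expansion idea implicit in the statement. Given $G = (V,E) \in \G(V)$ and the bijection $\psi : K_V \to \N$, write $\heart{G}{a} = \sum_{e \in E} a^{-\psi(e)}$; this series converges since it is dominated by the geometric series $\sum_{n \geq 1} a^{-n} = 1/(a-1)$. The natural candidate for the approximating finite graph is the truncation $G_0 := G \cap E_n(\psi) = \{ e \in E : \psi(e) \leq n \}$ for a suitable cutoff $n$ to be chosen. Then $G - G_0 = G \Delta G_0$ (symmetric difference) consists exactly of the edges $e \in E$ with $\psi(e) > n$, so $\heart{G - G_0}{a} = \sum_{e \in E,\ \psi(e) > n} a^{-\psi(e)} \leq \sum_{m > n} a^{-m} = \frac{a^{-n}}{a-1}$, using that $\psi$ is a bijection and hence hits each integer at most once.

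Next I would choose $n$ to make this tail bound smaller than $\epsilon$. The inequality $\frac{a^{-n}}{a-1} < \epsilon$ is equivalent to $a^{-n} < \epsilon(a-1)$, i.e. $-n \log a < \log(\epsilon(a-1))$, i.e. $n > -\log(\epsilon(a-1))/\log a$. So it suffices to take any integer $n \geq 1 - \log(\epsilon(a-1))/\log(a)$ (the additional $+1$ also harmlessly covers the case where the right-hand side is nonpositive, so that $n = 1$ works, and absorbs the strictness of the inequality). With this choice, $G_0 = G \cap E_n(\psi)$ has at most $|E_n(\psi)| = n \leq 1 - \log(\epsilon(a-1))/\log(a)$ edges, and $\heart{G - G_0}{a} \leq \frac{a^{-n}}{a-1} < \epsilon$, which is exactly the claimed bound.

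There is essentially no main obstacle here; the only minor point to be careful about is the edge case where $\epsilon(a-1) \geq 1$, so that $\log(\epsilon(a-1)) \geq 0$ and the "bound" $1 - \log(\epsilon(a-1))/\log(a)$ could be less than $1$ or even negative. In that regime one should note that $G_0 = \emptyset$ (i.e. $n = 0$, or equivalently just take the empty graph) already satisfies $\heart{G - \emptyset}{a} = \heart{G}{a} \leq 1/(a-1) \leq \epsilon$, so the statement holds trivially with zero edges. Packaging both cases, one concludes that $G_0$ can always be taken with at most $\max\{0,\, 1 - \log(\epsilon(a-1))/\log(a)\}$ edges, which is what the lemma asserts. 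This is why the paper calls the proof straightforward and omits it.
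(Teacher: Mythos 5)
Your proof is correct and is exactly the truncation argument the paper has in mind: it omits the proof, but its remark immediately afterwards that truncating the expansion at the $N$th place yields precisely $G \cap E_N(\psi)$ confirms this is the intended route, and your bookkeeping of the geometric tail $\sum_{m>n} a^{-m} = a^{-n}/(a-1)$ and the choice of $n$ in the unit interval $\bigl(-\log(\epsilon(a-1))/\log a,\ 1-\log(\epsilon(a-1))/\log a\bigr]$ is right. The only microscopic blemish is in your final ``packaging'' of the edge case: when $\epsilon(a-1)=1$ the chain $\heart{G}{a} \leq 1/(a-1) \leq \epsilon$ yields only a non-strict inequality (e.g.\ for $G=K_V$), but this is harmless because the lemma's bound then permits one edge and your main argument with $n=1$ already gives the strict inequality.
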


\noindent This result is akin to the well-known Weak Regularity Lemma for
unlabelled graphs shown by Frieze and Kannan \cite{FK}, where one needs a
graph with $\sim 2^{40\epsilon^{-2}}$ edges to `well-approximate' a
weighted graph with $\sim \epsilon^{-1}$ edges. In the labelled setting,
the edges in $G_0$ can be chosen independently of $G \in \G(V)$.
Note that given $G$ or $\heart{G}{2}$, it is easy to approximate $G$ as
closely as desired: truncating the binary expansion of $G$ at, say, the
$N$th place yields precisely $G \cap E_N(\psi)$.
Moreover, observing only the edges -- or interactions -- between
``previously identified key nodes'' is a linear time process, and one
which reduces to computations in finite graph theory.

We now state some further properties of the family of metrics
$\heart{\cdot}{a}$ for $a>1$. For example, the uniqueness (outside a
countable set) of writing a number in binary notation implies a similar
property for $\heart{\cdot}{2} : \G(V) \to [0, 1]$.

\begin{prop}\label{P5}
Suppose $\psi : K_V \to \N$ is an injection.
\begin{enumerate}
\item If $\heart{\cdot}{a} : \G(V) \to [0, \heart{K_V}{a}]$ is
surjective, then $1 < a \leq 2$. The converse holds if $\psi$ is a
bijection.

\item If $a>2$, then $\heart{\cdot}{a}$ is injective on $\G(V)$. The
converse holds if $\psi$ is a bijection. More precisely, if $a \leq 2$,
$\psi$ is a bijection, and $G \in \G_0(V)$ is finite and nonempty, then
there exists $G' \neq G$ such that $\heart{G}{a} = \heart{G'}{a}$.

\item Suppose $\psi$ is a bijection and $a=2$. Then $\heart{\cdot}{2} :
\G(V) \to [0,1]$ is a surjection such that every preimage has size at
most two. More precisely, for all finite nonempty graphs $G$ such that
$\psi(G) = \{ n_1 < \dots < n_k \} \subset \N$,
\[ \heart{G}{2} = \heart{\psi^{-1} \left( \{ n_1, \dots, n_{k-1} \}
\sqcup \{ n_k + 1, n_k + 2, \dots \} \right) }{2}, \]

\noindent and $\heart{\cdot}{2}$ is a bijection onto $[0,1]$ outside
finite graphs -- i.e., on $\G(V) \setminus \G_0(V)$ -- as well as outside
co-finite graphs.

\item If $a \geq 2$ and $\psi$ is a bijection, then the map
$\heart{\cdot}{a} : K_V \to [0,1]$ is order-preserving with respect to
the lexicographic order on $K_V$ (arranged according to $\psi^{-1}(1),
\psi^{-1}(2), \dots$).
\end{enumerate}
\end{prop}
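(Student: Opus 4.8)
The plan is to reduce the whole proposition to the elementary arithmetic of ``base-$a$ expansions with binary digits.'' Using the injection $\psi$, I identify a graph $G\in\G(V)$ with the digit sequence $(c_n)_{n\ge1}\in\{0,1\}^{\N}$ defined by $c_n=\mathbf 1_{\psi^{-1}(n)\in G}$ (with $c_n=0$ whenever $n\notin\psi(K_V)$), so that $\heart{G}{a}=\sum_{n\ge1}c_na^{-n}$ and $\heart{K_V}{a}=\sum_{n\in\psi(K_V)}a^{-n}\le\sum_{n\ge1}a^{-n}=\tfrac1{a-1}$, with equality exactly when $\psi$ is onto. Two facts do essentially all the work. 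First, the \emph{tail estimate}: at the least index $n$ where two graphs differ (or where a graph is first nonzero), the leading contribution $a^{-n}$ must be compared with the whole remaining tail $\sum_{m>n}a^{-m}=\tfrac{a^{-n}}{a-1}$, which is $<a^{-n}$ when $a>2$, $=a^{-n}$ when $a=2$, and $>a^{-n}$ when $a<2$. Second, a \emph{greedy-expansion lemma}: if $1<a\le2$ and $0\le x\le\tfrac1{a-1}$, then putting $r_0=x$ and inductively $c_n=1,\ r_n=r_{n-1}-a^{-n}$ when $r_{n-1}\ge a^{-n}$, else $c_n=0,\ r_n=r_{n-1}$, one proves by induction the invariant $0\le r_n\le\tfrac{a^{-n}}{a-1}$ — in the branch $c_n=0$ this reads $r_{n-1}<a^{-n}\le\tfrac{a^{-n}}{a-1}$, which is exactly where $a\le2$ enters — so $r_n\to0$ and $x=\sum_nc_na^{-n}$.

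Given these, parts (1) and (2) are short. For (1): if $a>2$, take $n=\min\psi(K_V)$; any $\heart{G}{a}$ is $\ge a^{-n}$ or $\le\tfrac{a^{-n}}{a-1}<a^{-n}$, so the nonempty interval $\bigl(\tfrac{a^{-n}}{a-1},a^{-n}\bigr)\subseteq[0,\heart{K_V}{a}]$ is omitted and $\heart{.}{a}$ is not surjective; and when $1<a\le2$ with $\psi$ a bijection, $[0,\heart{K_V}{a}]=[0,\tfrac1{a-1}]$ and the greedy lemma realises every target, so $\heart{.}{a}$ is onto. For (2): if $a>2$ and $G\ne G'$, pick $n=\min\psi(G\Delta G')$ with, say, $\psi^{-1}(n)\in G\setminus G'$; the tail estimate gives $\heart{G}{a}-\heart{G'}{a}\ge a^{-n}-\tfrac{a^{-n}}{a-1}=a^{-n}\tfrac{a-2}{a-1}>0$, so $\heart{.}{a}$ is injective. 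For the converse, if $a\le2$, $\psi$ is a bijection, and $G$ is finite nonempty with $\psi(G)=\{n_1<\dots<n_k\}$, then $a^{-n_k}\le\sum_{m>n_k}a^{-m}$, so the greedy lemma applied to the index set $\{n_k+1,n_k+2,\dots\}$ yields $S\subseteq\{n_k+1,n_k+2,\dots\}$ with $\sum_{m\in S}a^{-m}=a^{-n_k}$; then $G':=\psi^{-1}\bigl((\psi(G)\setminus\{n_k\})\cup S\bigr)$ is distinct from $G$ and $\heart{G'}{a}=\heart{G}{a}$. When $a=2$ the greedy choice forces $S=\{n_k+1,n_k+2,\dots\}$, which is the identity displayed in part (3).

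For part (3) ($a=2$, $\psi$ a bijection): surjectivity onto $[0,1]$ is the case $a=2$ of (1). For the fibres, suppose $\heart{G}{2}=\heart{G'}{2}$ with $G\ne G'$; with $n=\min\psi(G\Delta G')$ and $\psi^{-1}(n)\in G\setminus G'$, the tail estimate forces $\sum_{m\ge n}c_m2^{-m}=\sum_{m\ge n}c'_m2^{-m}=2^{-n}$, whence $c_m=0$ for all $m>n$ and $c'_m=1$ for all $m>n$; so $G$ and $G'$ are determined by their common value, every fibre has at most two points, and a two-point fibre consists of a finite nonempty graph $G$ (say $\psi(G)=\{n_1<\dots<n_k\}$) together with the co-finite graph $\psi^{-1}\bigl(\{n_1,\dots,n_{k-1}\}\cup\{n_k+1,n_k+2,\dots\}\bigr)$, which both gives the displayed identity and shows that $\heart{.}{2}$ is injective on $\G(V)\setminus\G_0(V)$ and on $\G(V)\setminus\G_1(V)$. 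For part (4) — reading the domain as $\G(V)=\calp(K_V)$, the hypothesis $a\ge2$ being what keeps the range inside $[0,1]$ — if $G<_{\rm lex}G'$ and $n$ is the first $\psi$-coordinate where they differ, then $\psi^{-1}(n)\notin G$, $\psi^{-1}(n)\in G'$, and the two agree below $n$, so $\heart{G'}{a}-\heart{G}{a}\ge a^{-n}-\tfrac{a^{-n}}{a-1}=a^{-n}\tfrac{a-2}{a-1}\ge0$ for $a\ge2$: the map is order-preserving (strictly so when $a>2$).

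The only genuinely delicate point is the greedy-expansion lemma — getting the invariant $r_n\le a^{-n}/(a-1)$ right and recognising that the branch $c_n=0$ is precisely where the hypothesis $a\le2$ is needed — since this single lemma powers both converse implications, in parts (1) and (2). Everything else reduces to the one-line comparison of the leading term $a^{-n}$ against the geometric tail $a^{-n}/(a-1)$, plus routine bookkeeping about finite versus co-finite graphs.
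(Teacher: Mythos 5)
Your proof is correct. The paper in fact offers no proof of this proposition --- it is asserted as a consequence of ``standard facts'' about binary (base-$a$) expansions --- and your write-up supplies exactly the details being taken for granted: the comparison of the leading term $a^{-n}$ against the geometric tail $a^{-n}/(a-1)$ handles injectivity, monotonicity, and non-surjectivity for $a>2$, while the greedy-expansion lemma (with the invariant $0\le r_n\le a^{-n}/(a-1)$, whose $c_n=0$ branch is indeed where $a\le 2$ is used) handles surjectivity and the two-expansion phenomenon at $a=2$. All four parts check out, including the correct reading of the typo in part (4) (the domain should be $\G(V)=\calp(K_V)$, not $K_V$). One pedantic remark on part (3): as stated, ``bijection onto $[0,1]$'' on $\G(V)\setminus\G_0(V)$ fails at the single point $0$, whose only preimage is the empty (finite) graph, so the restriction is really a bijection onto $(0,1]$ (and dually onto $[0,1)$ for $\G(V)\setminus\G_1(V)$); you prudently claim and prove only injectivity there, which is the substantive content and is what the rest of the paper (e.g.\ Proposition \ref{Thomeo}) actually uses.
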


\begin{remark}\label{RA2}
If $a \in (2,\infty)$ then the $\heart{\cdot}{a}$-weak norm is injective
but not surjective, while at $a=2$, it is injective except on the
countable set of finite and co-finite graphs. If $a \in (1,2)$ then
$\heart{\cdot}{a}$ is surjective from $\G(V)$ onto an interval, but not
injective. How ``non-injective'' does $\heart{\cdot}{a}$ get in this
case? The following result asserts that for all but countably many
algebraic numbers $a$, the answer is: on an uncountable set. As the
result is not relevant to the main focus of the paper, its proof is
omitted.
\end{remark}

\begin{prop}\label{PA2}
Suppose $\phi := (1 + \sqrt{5})/2$ is the golden ratio, and either $a \in
(1,\phi]$, or $a \in (\phi,2)$ is transcendental. Also assume that $\psi
: K_V \to \N$ is a bijection, and $G \in \G_0(V)$ is finite. Then there
exist uncountably many graphs $G' \supset G$ such that $\heart{G'}{a} =
\heart{G''}{a}$ for some $G'' \neq G'$ also containing $G$.
\end{prop}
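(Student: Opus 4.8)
The plan is to reduce the statement to a fact about $\{0,1\}$-expansions in base $a$ (i.e.\ representations $\sum d_n a^{-n}$ with $d_n\in\{0,1\}$), and then to encode a Cantor set of collisions. Since $\psi$ is a bijection I identify a graph with the subset $S\subseteq\N$ of $\psi$-labels of its edges, so that $\heart{\psi^{-1}(S)}{a}=\sum_{n\in S}a^{-n}=:\Phi(S)$; the goal then becomes to produce uncountably many $S\supseteq\psi(G)$, each admitting some $S'\neq S$ with $S'\supseteq\psi(G)$ and $\Phi(S)=\Phi(S')$. As $G$ is finite, fix $m\in\N$ with $m>\max\psi(G)$ (take $m:=1$ if $G=\emptyset$), so that $\psi(G)\subseteq\{1,\dots,m-1\}$; every graph constructed below will contain $G$ and differ from it only in edges with $\psi$-label $>m$, so that $\psi(G)$ merely contributes the additive constant $\Phi(\psi(G))$ everywhere and plays no further role.

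The main step---and, I expect, the only real obstacle---is the following lemma: \emph{there is a nonempty set $\tilde C\subseteq\{m+1,m+2,\dots\}$ with $\Phi(\tilde C)=a^{-m}$ whose complement $W:=\{m+1,m+2,\dots\}\setminus\tilde C$ is infinite.} To construct $\tilde C$, I would run the greedy algorithm for the target value $a^{-m}$ using only positions $m+1,m+2,\dots$: put $r_m:=a^{-m}$ and, for $n>m$, set $d_n:=1$ if $a^{-n}\le r_{n-1}$ and $d_n:=0$ otherwise, with $r_n:=r_{n-1}-d_n a^{-n}$; let $\tilde C:=\{n>m:d_n=1\}$. A routine induction using only $a\le 2$ gives $0\le r_n\le\sum_{k>n}a^{-k}=a^{-n}/(a-1)\to 0$, whence $\Phi(\tilde C)=a^{-m}$, and $\tilde C\neq\emptyset$ as this value is positive. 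For the infinitude of $W$: if $W$ were finite then $d_n=1$ for all large $n$, and also $W\neq\emptyset$ (else $a^{-m}=\sum_{n>m}a^{-n}=a^{-m}/(a-1)$ forces $a=2$); letting $j:=\max W$ and using that the greedy remainder equals the value of the unread tail, $r_{j-1}=\sum_{n>j}a^{-n}=a^{-j}/(a-1)$, so the equality $d_j=0$, i.e.\ $a^{-j}>r_{j-1}$, forces $a>2$, a contradiction. Hence $W$ is infinite. (This is the only place the arithmetic of $a$ enters, and the argument in fact goes through for every $a\in(1,2)$, so the hypothesis on $a$ is stronger than necessary for this conclusion; conceptually, since $a^{-m}$ lies strictly below the maximal tail sum $a^{-m}/(a-1)$, its greedy expansion is not eventually all $1$'s and so must leave infinitely many positions equal to $0$.)

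Given $\tilde C$ and the infinite set $W$, I would let $D$ range over all subsets of $W$ and set
\[
S_D:=\psi(G)\coprod\{m\}\coprod D,\qquad S'_D:=\psi(G)\coprod\tilde C\coprod D,
\]
these unions being disjoint because $\psi(G)\subseteq\{1,\dots,m-1\}$, $m\notin\tilde C\cup D$, and $\tilde C\cap D=\emptyset$ (as $D\subseteq W$). Then
\[
\Phi(S_D)=\Phi(\psi(G))+a^{-m}+\Phi(D)=\Phi(\psi(G))+\Phi(\tilde C)+\Phi(D)=\Phi(S'_D),
\]
so the graphs $G'_D:=\psi^{-1}(S_D)$ and $G''_D:=\psi^{-1}(S'_D)$ satisfy $\heart{G'_D}{a}=\heart{G''_D}{a}$; both properly contain $G$ (since $m\notin\psi(G)$ and $\tilde C\neq\emptyset$), and $G'_D\neq G''_D$ because the edge $\psi^{-1}(m)$ lies in $G'_D$ but not in $G''_D$. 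Finally $D\mapsto G'_D$ is injective, since $D=\psi(G'_D)\cap W$, so there are $2^{\aleph_0}$ pairwise distinct graphs $G'_D\supset G$, each paired with a distinct graph $G''_D\neq G'_D$ that also contains $G$. This proves the proposition.
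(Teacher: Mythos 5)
The paper never supplies a proof of Proposition \ref{PA2}: Remark \ref{RA2} says explicitly that the proof is omitted as not relevant to the paper's main focus, so there is no official argument to compare yours against. Judged on its own, your proof is correct and complete. The reduction to subsets of $\N$ via the bijection $\psi$ is legitimate; the greedy-expansion lemma is sound (the invariant $0\le r_n\le a^{-n}/(a-1)$ needs only $a\le 2$, and the ``not eventually all ones'' step, via $d_j=0$ forcing $a^{-j}>a^{-j}/(a-1)$ and hence $a>2$, needs only $a<2$); and the padding construction $D\mapsto (S_D,S'_D)$ over the $2^{\aleph_0}$ subsets $D$ of the infinite set $W$ produces uncountably many pairs of distinct graphs containing $G$ with equal value of $\heart{\cdot}{a}$, the two members of each pair being separated by the edge $\psi^{-1}(m)$ and the map $D\mapsto G'_D$ being injective because $D=\psi(G'_D)\cap W$.

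One observation worth recording: as you yourself note, your argument nowhere uses that $a\le\phi$ or that $a\in(\phi,2)$ is transcendental, and it establishes the stated conclusion for every $a\in(1,2)$. This strongly suggests the authors had in mind either a different proof or a finer statement tied to the theory of non-unique $q$-expansions --- e.g.\ the Erd\H{o}s--Jo\'o--Komornik phenomenon that for $1<q\le\phi$ \emph{every} point of the relevant interval admits a continuum of $\{0,1\}$-expansions, while for $q>\phi$ points with unique expansions appear --- which is exactly where the golden ratio and the transcendence condition become germane. Your device of exhibiting a single localized collision $\{m\}\leftrightarrow\tilde C$ above level $m$ with infinite co-support $W$, and then translating it by arbitrary subsets of $W$, sidesteps all of that arithmetic; the price is that it proves only the literal claim (uncountably many colliding graphs) rather than any stronger ``almost every value has uncountably many preimages'' assertion. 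Since the literal claim is what the proposition asserts, this is a perfectly acceptable --- indeed more general --- route.
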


Proposition \ref{PA2} is also related to {\it interval-filling
sequences}; see \cite{DJK1,DJK2} for more on these. Moreover, the phase
transition occurring at $2$, as discussed in Remark \ref{RA2}, is
crucially used later in this section.\medskip

It is now possible to state and prove the main result of this subsection.
The result asserts that the maps $\heart{\cdot}{a}$ help identify more
familiar models for graph space $\G(V)$.

\begin{prop}\label{Thomeo}
Fix a countable set $V$ and a bijection $\psi : K_V \to \N$. Define
$\G'(V) \subset \G(V)$ to be the subset of countable graphs on $V$, which
are neither finite nor co-finite. 
Then for each $a > 2$, the map $\heart{\cdot}{a}$ is a homeomorphism from
$\G(V)$ onto its image in $\R$ (which is the Cantor set for $a=3$). The
same holds for $a=2$, when restricted to the dense subset $\G'(V)$ but
not to any domain strictly containing $\G'(V)$.
\end{prop}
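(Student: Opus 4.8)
The plan is to treat the three regimes ($a>2$ on all of $\G(V)$; $a=2$ on $\G'(V)$; and the negative assertion for $a=2$ on larger domains) separately, leveraging the structural facts already assembled in Proposition \ref{P5} and the compactness of $\G(V)$ from Proposition \ref{Ttop}. The common engine is the elementary topological fact that a continuous bijection from a compact space to a Hausdorff space is automatically a homeomorphism; since $\heart{.}{a} = d_{\varphi_a}(\mathbf{0},-)$ already metrizes the product topology, continuity is for free, so in each regime the entire task reduces to injectivity on the relevant domain.

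First I would handle $a>2$. By Proposition \ref{P5}(2), $\heart{.}{a}$ is injective on all of $\G(V)$. Since $\G(V)$ is compact (Proposition \ref{Ttop}) and $\R$ is Hausdorff, the continuous injection $\heart{.}{a}$ is a homeomorphism onto its image, which is automatically compact. For the parenthetical claim about $a=3$: the image is $\{\sum_{e\in E} 3^{-\psi(e)} : E\subset K_V\}$; transporting along the bijection $\psi$, this is exactly the set of real numbers in $[0,1/2]$ (or $[0,\sum 3^{-n}]$, a harmless rescaling) whose base-$3$ expansion uses only the digits $0$ and $1$, i.e., a linear image of the standard middle-thirds Cantor set — so I would just cite the standard description of the Cantor set as $\{\sum_n c_n 3^{-n} : c_n\in\{0,1\}\}$.

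Next, the case $a=2$ restricted to $\G'(V)$. Here I would invoke Proposition \ref{P5}(3): $\heart{.}{2}$ is a bijection from $\G(V)\setminus(\G_0(V)\cup\G_1(V)) = \G'(V)$ onto its image, the only collisions of $\heart{.}{2}$ being the pairs (finite graph, cofinite-type graph) exhibited there, all of which involve a member of $\G_0(V)\cup\G_1(V)$. Density of $\G'(V)$ in $\G(V)$ follows since $\G_0(V)\cup\G_1(V)$ is countable while $\G(V)$ is perfect (Proposition \ref{Ttop}, $V$ infinite), so its complement is dense; alternatively $\G'(V)$ contains the dense set of limits obtained by perturbing any graph at a single cofinite tail. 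Now $\G'(V)$ is not compact, so I cannot repeat the compactness argument verbatim; instead I would argue directly that $\heart{.}{2}\colon \G'(V)\to\R$ is open, or equivalently that its inverse is continuous. The clean route: extend to the compact space, i.e., observe that $\heart{.}{2}\colon\G(V)\to[0,1]$ is a continuous surjection that is at most two-to-one, collapsing only countably many pairs, and that the two collapsed points in each pair are \emph{far apart} in $\G(V)$ — so the equivalence relation "same $\heart{.}{2}$-value" is closed, making $\heart{.}{2}$ a quotient map onto $[0,1]$; restricting a quotient map to the saturated-complement of the (countable) collision set, on which it is injective, yields a homeomorphism onto the corresponding subset of $[0,1]$. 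I expect verifying that this restriction lands homeomorphically — in particular that the subspace topology on $\G'(V)$ from $\G(V)$ matches the one pulled back from $[0,1]$ — to be the main obstacle, and the cleanest fix is to check directly that basic cylinder sets in $\G'(V)$ have open image, using the explicit order-preservation of $\heart{.}{2}$ with respect to the lexicographic order (Proposition \ref{P5}(4)): a cylinder fixing the first $n$ edge-coordinates maps to an interval intersected with the image, because lexicographic order on the cylinder corresponds to the natural order on the dyadic sub-interval.

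Finally, the negative assertion: that $\heart{.}{2}$ fails to be a homeomorphism on any domain strictly containing $\G'(V)$. Such a domain $D$ contains some finite graph $G\in\G_0(V)$ or some cofinite graph $G\in\G_1(V)$; by Proposition \ref{P5}(3) there is a distinct graph $G'\in\G'(V)\subset D$ with $\heart{G}{2}=\heart{G'}{2}$, so $\heart{.}{2}$ is not injective on $D$ and hence not a homeomorphism. This disposes of the last clause and completes the argument; the only real work is the continuity-of-the-inverse verification in the $a=2$ case, everything else being a citation of Proposition \ref{P5} plus the compact-to-Hausdorff principle.
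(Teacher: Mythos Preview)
Your arguments for $a>2$ and for $a=2$ restricted to $\G'(V)$ are correct and essentially match the paper's approach: both use the compact-to-Hausdorff principle for $a>2$, and for $a=2$ the paper checks continuity of the inverse directly via convergence of binary digits, while your closed-map route also works once one notes that the restriction of a closed surjection to a saturated subset is again closed.

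The genuine gap is in the final, negative clause. You argue that any $D\supsetneq\G'(V)$ contains some $G\in\G_0(V)\cup\G_1(V)$, and that Proposition \ref{P5}(3) then supplies a distinct $G'\in\G'(V)\subset D$ with $\heart{G}{2}=\heart{G'}{2}$, killing injectivity. But the partner produced by Proposition \ref{P5}(3) is not in $\G'(V)$: if $G\in\G_0(V)$ is nonempty with $\psi(G)=\{n_1<\cdots<n_k\}$, its partner $\psi^{-1}\bigl(\{n_1,\dots,n_{k-1}\}\cup\{n_k+1,n_k+2,\dots\}\bigr)$ is \emph{cofinite}, hence in $\G_1(V)$; and ${\bf 0}$, $K_V$ have no partner at all. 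So for $D=\G'(V)\cup\{{\bf 0}\}$, or $D=\G'(V)\cup\{G\}$ with a single finite $G$, the map $\heart{\cdot}{2}$ remains injective on $D$ and your argument is silent.

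What actually fails on such $D$ is continuity of the inverse, and this requires a construction rather than a citation. The paper builds, for any $G\in\G_0(V)$, a sequence $G_n\in\G'(V)$ with $\heart{G_n}{2}\to\heart{G}{2}$ but $G_n\not\to G$: one removes the top edge $\psi^{-1}(\max\psi(G))$ from $G$ and replaces it by an increasing family of ``tails'' that are neither finite nor cofinite, so that the binary values approach $\heart{G}{2}$ from below while the graphs themselves converge to the cofinite partner of $G$ rather than to $G$. A symmetric construction handles $G\in\G_1(V)$. You will need something of this sort to close the argument.
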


\begin{proof}
First suppose $a>2$. By the closed map lemma, the bijection
$\heart{\cdot}{a} : \G(V) \to \R$ is continuous and closed, hence a
homeomorphism. That the image is the Cantor set for $a=3$ is also easily
shown, e.g.~by results in \cite{Cam}.

Now suppose $a=2$. (Note that the closed map lemma does not apply to
$\G(V)$ since $\heart{\cdot}{2}$ is not a bijection on $\G(V)$.) Then
$\heart{\cdot}{2} : \G'(V) \to [0,1]$ is continuous and a bijection. We
now show that $(\heart{\cdot}{2})^{-1} : [0,1] \setminus \bbd_2 \to
\G'(V)$ is also continuous. Suppose $\heart{G_n}{2} \to \heart{G}{2}$ in
$[0,1] \setminus \bbd_2$. In other words the ``binary expansions'' of
$G_n$ converge to that of $G$. But then for all $k$, the $k$th digit of
the binary expansion - which corresponds to ${\bf 1}_{\psi^{-1}(k) \in
G_n}$ is eventually equal to the $k$th digit of $G$, since
$\heart{\cdot}{2}$ is a bijection on $\G'(V)$. This shows that $G_n \to
G$ in the product topology (in $\G'(V)$).

Finally, to show that $\G'(V)$ is maximal for the property of
$\heart{\cdot}{2}$ being a homeomorphism, suppose $G \in \G_0(V)$. We
will construct a sequence of graphs $G_n \in \G'(V)$ such that
$\heart{G_n}{2} \to \heart{G}{2}$ but $G_n \not\to G$. Indeed, fix any
partition of $\N \setminus \{ 1, \dots, \max(\psi(G)) \}$ into two
infinite subsets $S = \{ m_n : n \in \N \}$ and $T$, and define
\[ G_n := \psi^{-1}(T) \sqcup \{ \psi^{-1}(m_1), \dots, \psi^{-1}(m_n)
\} \sqcup G \setminus \{ \psi^{-1}(\max(\psi(G))) \}. \]

\noindent Then $G_n \in \G'(V)$ satisfies the desired assertions, showing
that $\heart{\cdot}{2}$ is not a homeomorphism on any set containing
$\G'(V) \cup \{ G \}$, for any finite graph $G \in \G_0(V)$. The proof is
similar for $G \in \G_1(V)$.
\end{proof}
%}}}

%{{{1 Section 3.2 - Newton--Leibnitz differential calculus on graph space
\subsection{Newton--Leibnitz differential calculus on graph space}\label{S9}

We now provide a novel approach to developing differential calculus on
labelled graph space. To do so, we propose a model that allows us to
transport differentiation on $\R$ to $\G(V)$. Using Proposition
\ref{Thomeo} (in a manner explained below), we first define the
derivative on graph space $\G(V)$.

\begin{definition}[Derivative of a function at a graph]\label{Dder}
Suppose $V$ is countable, with fixed bijection $\psi : K_V \to \N$. Now
given $f : \G(V) \to \R$, define its derivative at a graph $G \neq {\bf
0}, K_V$ to be:
\[ f'(G) := \lim_{G_1 \to G,\ G_1 \in \G'(V)} \frac{f(G_1) -
f(G)}{\heart{G_1}{2} - \heart{G}{2}}, \]

\noindent if this limit exists.
\end{definition}

\begin{remark}
This version of the derivative is a natural candidate to work with, as it
transports the topological structure of $[0,1]$ into graph space. There
is a further parallel to the usual derivative in one-variable calculus:
\[ f'(x) = \lim_{y \to x} \frac{f(y) - f(x)}{y-x}. \]

\noindent Observe that if $y > x$ or $y < x$, then the denominator in the
right-hand limit is positive or negative respectively. When defining the
derivative $f'(G)$ above, the same holds for $G_1 \neq G$ in the
lexicographic order on $\G(V)$, by using Proposition \ref{P5}(4).
We remark also that $f'(G)$ also equals the limit
\[
\lim_{G_1 \to G,\ G_1 \in \G'(V)} \frac{f(G_1)- f(G)}{\heart{G_1
\setminus G}{2} - \heart{G \setminus G_1}{2}}.
\]
\end{remark}

We are now able to state and prove a version of the First and Second
Derivative Tests on labelled graph space.

\begin{theorem}[First and Second Derivative Tests]\label{Tfder}
Suppose $f : \G(V) \to \R$ has a local extreme point at $G_0 \in \G(V)$,
with $G_0 \neq {\bf 0}, K_V$. Then $G_0$ is a critical point of $f$ --
i.e., $f'(G_0)$ is zero if it exists.

Suppose instead that $f$ is twice differentiable at a critical point $G_0
\neq {\bf 0}, K_V$, and also continuous in a neighborhood of $G_0$. If
$f''(G_0) < 0$, then $G_0$ is a local maximum for $f$.
\end{theorem}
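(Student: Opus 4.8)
The plan is to reduce both statements to the corresponding facts on the real line, using the homeomorphism $\heart{.}{2} \colon \G'(V) \to [0,1] \setminus \bbd_2$ from Proposition \ref{Thomeo} to transport the local behavior of $f$ near $G_0$ to the local behavior of a real function near the point $x_0 := \heart{G_0}{2}$. First I would observe that since $G_0 \neq {\bf 0}, K_V$, the difference quotient defining $f'(G_0)$ is taken over graphs $G_1 \in \G'(V)$ approaching $G_0$, so only the values of $f$ on $\G'(V) \cup \{G_0\}$ matter; I would set $x_0 = \heart{G_0}{2}$ and note $x_0 \in (0,1)$, with $x_0 \in \bbd_2$ exactly when $G_0 \in \G_0(V) \cup \G_1(V)$ and $x_0 \notin \bbd_2$ otherwise.

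For the First Derivative Test, suppose $f'(G_0)$ exists and is nonzero, say $f'(G_0) > 0$ (the other case is symmetric). By the very definition of the limit in Definition \ref{Dder}, there is a neighborhood of $G_0$ in which, for $G_1 \in \G'(V)$, the sign of $f(G_1) - f(G_0)$ agrees with the sign of $\heart{G_1}{2} - \heart{G_0}{2}$. By Proposition \ref{P5}(4) the quantity $\heart{G_1}{2} - \heart{G_0}{2}$ is positive for $G_1$ lexicographically above $G_0$ and negative for $G_1$ below; since $G_0 \neq {\bf 0}, K_V$, graph space $\G(V)$ — and in fact $\G'(V)$, which is dense by Proposition \ref{Thomeo} — contains graphs arbitrarily close to $G_0$ on both sides of it in the lex/$\heart{.}{2}$ order. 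Hence in every neighborhood of $G_0$ there are graphs $G_1 \in \G'(V)$ with $f(G_1) > f(G_0)$, contradicting the assumption that $f$ is locally maximized at $G_0$. Therefore $f'(G_0) = 0$. One small point to handle carefully: if $G_0 \in \G_0(V) \cup \G_1(V)$ then $x_0$ is dyadic and has two $\heart{.}{2}$-preimages, but Proposition \ref{P5}(3) shows the other preimage is again a finite or cofinite graph, hence not in $\G'(V)$, so the one-sided approach within $\G'(V)$ still reaches both sides of $x_0$ in $[0,1]$; I would spell this out.

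For the Second Derivative Test, the key is that twice-differentiability of $f$ at $G_0$ together with continuity of $f$ near $G_0$ lets us transport to a genuine real-variable statement. Define $g \colon [0,1] \setminus \bbd_2 \to \R$ by $g := f \circ (\heart{.}{2})^{-1}$ on $\G'(V)$, and extend it to $x_0$ by $g(x_0) := f(G_0)$; by continuity of $f$ and of the homeomorphism this $g$ is continuous near $x_0$, and by construction $g'(x_0) = f'(G_0) = 0$ and $g''(x_0) = f''(G_0) < 0$. The standard Second Derivative Test on $\R$ then gives a punctured neighborhood of $x_0$ on which $g(x) < g(x_0)$; pulling back through the homeomorphism yields a neighborhood of $G_0$ in $\G'(V)$ on which $f < f(G_0)$. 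Finally, to upgrade this from "local max among graphs in $\G'(V)$" to "local max in $\G(V)$," I would use that $\G'(V)$ is dense in $\G(V)$ (Proposition \ref{Thomeo}) together with the assumed continuity of $f$ near $G_0$: any graph $G_1 \in \G(V) \setminus \G'(V)$ sufficiently close to $G_0$ is a limit of graphs in $\G'(V)$ all satisfying $f < f(G_0)$, so $f(G_1) \le f(G_0)$ by continuity. The main obstacle I anticipate is precisely this last density/continuity argument — making sure the strict inequality on the dense set $\G'(V)$ yields the (non-strict, but sufficient for a local max) inequality on all of $\G(V)$, and checking that the dyadic-preimage subtlety at finite and cofinite graphs does not break the two-sided-approach argument in the First Derivative Test.
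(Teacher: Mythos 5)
Your overall strategy --- transporting $f$ through $\heart{.}{2}$ to a function on $[0,1]\setminus\bbd_2$, running the classical sign argument for the first test, and using density of $\G'(V)$ plus continuity near $G_0$ to upgrade the second test from $\G'(V)$ to $\G(V)$ --- is essentially the paper's own route (the paper packages the restricted real-variable limit as a ``$\T$-derivative'' with $\T=[0,1]\setminus\bbd_2$, but the content is the same). For $G_0\in\G'(V)$ your First Derivative Test argument is correct: there $\heart{.}{2}$ carries a product-topology neighborhood of $G_0$ homeomorphically onto a relative neighborhood of $x_0$ in $[0,1]\setminus\bbd_2$, which accumulates at $x_0$ from both sides, so the two-sided sign argument goes through.

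The gap is at $G_0\in\G_0(V)\cup\G_1(V)$, and your parenthetical fix does not repair it. If $G_0$ is finite and $N\geq\max\psi(G_0)$, then every $G_1$ in the cylinder neighborhood fixing the indicators of $\psi^{-1}(1),\dots,\psi^{-1}(N)$ satisfies $G_1\cap E_N(\psi)=G_0$, hence $\heart{G_1}{2}=\heart{G_0}{2}+\sum_{e\in G_1,\ \psi(e)>N}2^{-\psi(e)}\geq\heart{G_0}{2}$: the approach to $G_0$ in the product topology is genuinely one-sided in the $\heart{.}{2}$-order (dually for cofinite $G_0$). The graphs of $\G'(V)$ whose $\heart{.}{2}$-value lies just below $x_0$ are those close to the \emph{other} binary expansion of $x_0$; they are bounded away from $G_0$ in the product topology, so they are not admissible in the limit defining $f'(G_0)$. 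Thus your claim that every neighborhood of $G_0$ contains graphs of $\G'(V)$ on both sides is false there, and the sign argument collapses: with $G_0=\{\psi^{-1}(1)\}$ and $f(G):=\heart{G_0}{2}-\heart{G}{2}$, the cylinder $\{G:\psi^{-1}(1)\in G\}$ is a neighborhood on which $f\leq f(G_0)=0$, yet the difference quotient is identically $-1$, so $f'(G_0)=-1\neq 0$. The same one-sidedness undermines the identifications $g'(x_0)=f'(G_0)$ and $g''(x_0)=f''(G_0)$ that your second-test argument uses at such $G_0$ (the pullback direction you need for concluding a local maximum is less affected, since a product neighborhood of $G_0$ maps \emph{into} a one-sided $\heart{.}{2}$-neighborhood of $x_0$). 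The paper's own proof dismisses this case with the single clause ``restrict to a sufficiently small neighborhood of $G_0$'' and is exposed to the same objection; but as written your proof is only complete for $G_0\in\G'(V)$, and you should either restrict to that case or supply a genuinely different argument at finite and cofinite graphs.
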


\begin{proof}
Given $\T \subset \R$ and $g : \overline{\T} \to \R$, define the {\it
$\T$-derivative} of $g$ at $x_0 \in \overline{\T}$ to be
\[ D^\T g(x_0) := \lim_{x \in \T,\ x \to x_0} \frac{g(x) - g(x_0)}{x -
x_0}, \]

\noindent if this (two-sided) limit exists. This definition is weaker
than the usual notion of the derivative, which is the special case $\T =
\R$. However, it also satisfies the standard properties of
differentiation (for real-valued functions), such as the product,
quotient, and chain rules. In particular, if $x_0$ is a local maximum and
an interior point of $\overline{\T}$, then we can adopt the proof of the
usual First Derivative Test to taking limits as $x \to x_0, \ x \in \T$.
We conclude that $D^\T g(x_0) = 0$ at local extreme points $x_0$ which
are interior points of $\overline{\T}$.

Now fix $\T := [0,1] \setminus \bbd_2$, and suppose $f'(G_0)$ exists.
Then observe that $f$ is continuous at $G_0$. Let $x_0 :=
\heart{G_0}{2}$, and consider the function $g : ([0,1] \setminus \bbd_2)
\cup \{ x_0 \} \to \R$, given by $g(x) := f(G_0)$ if $x = x_0$, and
$f((\heart{\cdot}{2})^{-1}(x))$ otherwise. If $G_0$ is either finite or
co-finite, then restrict to a sufficiently small neighborhood of $G_0$;
this shows that if $G_1 \to G_0$ with $G_1 \in \G'(V)$, then
$g(\heart{G_1}{2}) \to g(x_0)$. Now since $f'(G_0)$ exists, so does $D^\T
g(x_0)$ by Proposition \ref{Thomeo}. Moreover, $f'(G_0) = D^\T g(x_0) =
0$ by the above analysis, since $G_0$ is a local maximum for $f$.

This proves the First Derivative Test; we now show the Second Derivative
Test. Suppose $G_0 \neq {\bf 0}, K_V$ is a critical point and $f''(G_0) <
0$. As above, this implies that $D^\T D^\T g(x_0) < 0$. Now adopt the
proof of the Second Derivative Test to taking limits as $x \to x_0, \ x
\in \T = [0,1] \setminus \bbd_2$. Thus $x_0$ is a local maximum for $g$
on $([0,1] \setminus \bbd_2) \cup \{ x_0 \}$, whence $G_0$ is a local
maximum for $f$ on $\G'(V) \cup \{ G_0 \}$. We are now done since $f$ is
continuous near $G_0$.
\end{proof}

It is also not hard to show that the derivative in graph space satisfies
the usual product, quotient, and chain rules. We write down two of these
results; the proofs are as in one-variable calculus.

\begin{lemma}[Product and Chain Rules]\label{Lchain}
Suppose $f,g : \G(V) \to \R$ are differentiable at $G \neq {\bf 0}, K_V$,
and $h : \R \to \R$ is differentiable at $f(G)$. Then $f \cdot g$ and $h
\circ f$ are differentiable at $G$, and moreover,
\[ (f \cdot g)'(G) = f(G) g'(G) + f'(G) g(G), \qquad
(h \circ f)'(G) = h'(f(G)) f'(G). \]
\end{lemma}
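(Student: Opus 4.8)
The plan is to reduce both identities to the corresponding statements for the $\T$-derivative $D^\T g$ on $\R$ introduced in the proof of Theorem~\ref{Tfder}, applied with $\T = [0,1] \setminus \bbd_2$, and then to invoke the homeomorphism $\heart{.}{2} : \G'(V) \to [0,1]$ of Proposition~\ref{Thomeo}. First I would fix $G \neq {\bf 0}, K_V$ and set $x_0 := \heart{G}{2}$. For a function $u : \G(V) \to \R$ that is differentiable at $G$, define $\widetilde u : ([0,1] \setminus \bbd_2) \cup \{ x_0 \} \to \R$ by $\widetilde u(x_0) := u(G)$ and $\widetilde u(x) := u((\heart{.}{2})^{-1}(x))$ otherwise. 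Exactly as in the First Derivative Test, differentiability of $u$ at $G$ is equivalent to the existence of $D^\T \widetilde u(x_0)$, and the two values agree; moreover $u$ is then continuous at $G$, so $\widetilde u$ is continuous at $x_0$ as well (after restricting to a small neighbourhood when $G$ is finite or co-finite). The point is that $\widetilde{f \cdot g} = \widetilde f \cdot \widetilde g$ and $\widetilde{h \circ f} = h \circ \widetilde f$ as functions on (a neighbourhood of $x_0$ in) the domain, so the graph-space identities follow verbatim from the real-variable product and chain rules for $D^\T$.

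The second step is therefore to record that $D^\T$ obeys the product and chain rules. For the product rule, write the difference quotient of $\widetilde f \cdot \widetilde g$ at $x_0$ as
\[
\frac{\widetilde f(x)\widetilde g(x) - \widetilde f(x_0)\widetilde g(x_0)}{x - x_0}
= \widetilde f(x)\,\frac{\widetilde g(x) - \widetilde g(x_0)}{x - x_0}
+ \widetilde g(x_0)\,\frac{\widetilde f(x) - \widetilde f(x_0)}{x - x_0},
\]
and let $x \to x_0$ along $\T$, using continuity of $\widetilde f$ at $x_0$ and the existence of $D^\T \widetilde f(x_0)$ and $D^\T \widetilde g(x_0)$. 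For the chain rule one uses the standard Carath\'eodory device: since $h$ is differentiable at $y_0 := f(G) = \widetilde f(x_0)$, there is a function $\eta$ continuous at $y_0$ with $\eta(y_0) = h'(y_0)$ and $h(y) - h(y_0) = \eta(y)(y - y_0)$ for all $y$; then
\[
\frac{h(\widetilde f(x)) - h(\widetilde f(x_0))}{x - x_0}
= \eta(\widetilde f(x))\,\frac{\widetilde f(x) - \widetilde f(x_0)}{x - x_0},
\]
and letting $x \to x_0$ along $\T$ (using that $\widetilde f(x) \to \widetilde f(x_0)$ by continuity and $\eta$ is continuous at $y_0$) gives $D^\T(h \circ \widetilde f)(x_0) = h'(f(G))\, D^\T \widetilde f(x_0)$. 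Translating back through $\heart{.}{2}$ yields exactly the two displayed formulas, and in particular shows $f \cdot g$ and $h \circ f$ are differentiable at $G$.

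I do not expect a genuine obstacle here: the only subtlety is the bookkeeping at finite and co-finite graphs $G$, where $(\heart{.}{2})^{-1}$ is two-valued globally but, as in the proof of Theorem~\ref{Tfder}, one simply passes to a neighbourhood of $G$ small enough that the inverse is single-valued and $\widetilde f, \widetilde g$ are defined and continuous at $x_0 = \heart{G}{2}$; this is legitimate because the derivative in Definition~\ref{Dder} only sees $G_1 \to G$ with $G_1 \in \G'(V)$. Everything else is the one-variable argument applied to the transported functions, which is why the paper can legitimately say ``the proofs are as in one-variable calculus.''
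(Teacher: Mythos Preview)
Your proposal is correct. The paper does not actually give a proof of this lemma --- it simply states that ``the proofs are as in one-variable calculus'' --- so there is nothing detailed to compare against, and your argument is a legitimate elaboration of that remark.

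That said, your route through the transfer $\widetilde u$ and the $\T$-derivative is more machinery than the paper likely intended. The difference quotient in Definition~\ref{Dder} already has the exact form of a one-variable difference quotient (the denominator $\heart{G_1}{2} - \heart{G}{2}$ is a real number, and the limit is over $G_1 \to G$), so the standard algebraic decompositions can be written directly in $\G(V)$: for the product rule,
\[
\frac{f(G_1)g(G_1) - f(G)g(G)}{\heart{G_1}{2} - \heart{G}{2}}
= f(G_1)\,\frac{g(G_1) - g(G)}{\heart{G_1}{2} - \heart{G}{2}}
+ g(G)\,\frac{f(G_1) - f(G)}{\heart{G_1}{2} - \heart{G}{2}},
\]
and one passes to the limit using continuity of $f$ at $G$; the Carath\'eodory factorization for the chain rule works identically with $G_1$ in place of $x$. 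This avoids invoking Proposition~\ref{Thomeo} and the bookkeeping at finite/co-finite graphs altogether. Your version is not wrong --- it is the same computation viewed through the homeomorphism --- but the direct reading is probably what the authors had in mind by ``as in one-variable calculus.''
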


We have thus seen that the derivative in graph space satisfies several
well-known properties in the one-variable theory on $\R$. We now show
that it does not always satisfy a ``translation-invariance'' property.

\begin{prop}
Suppose $f : \G(V) \to \R$ is differentiable at $G \neq {\bf 0}, K_V$.
For each $\varepsilon \in \{ 0, 1 \}$ and $G_0 \in \G_\varepsilon(V)$,
the function $g(G) := f(G + G_0)$ is differentiable at $G - G_0$, and
$g'(G-G_0) = (-1)^\varepsilon f'(G)$. However, $g'(G-G_0)$ does not exist
if $G_0 \in \G'(V)$ and $f'(G) \neq 0$.
\end{prop}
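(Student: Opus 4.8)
The plan is to reduce all three parts to a single absolutely convergent edge-by-edge identity for the weak norm under translation, and then to analyse the resulting difference quotient separately in the three regimes for $G_0$. Throughout write $H_0 := G - G_0 = G \Delta G_0$ (in the $\Z/2\Z$-algebra $-G_0 = G_0$), so that $g(H_0) = f(H_0 + G_0) = f(G)$, and assume $H_0 \neq {\bf 0}, K_V$ so that $g'(H_0)$ is meaningful in the first place. Expanding ${\bf 1}_{e \in H \Delta G_0} = {\bf 1}_{e \in H} + {\bf 1}_{e \in G_0} - 2\,{\bf 1}_{e \in H}{\bf 1}_{e \in G_0}$ and summing against $2^{-\psi(e)}$ (legitimate since $\sum_{e} 2^{-\psi(e)} = \heart{K_V}{2}$ is finite) yields, for every $G_1 \in \G(V)$,
\[
\heart{G_1 \Delta G_0}{2} - \heart{G \Delta G_0}{2} = \sum_{e \in G_1 \Delta G} \bigl({\bf 1}_{e \in G_1} - {\bf 1}_{e \in G}\bigr)\,(-1)^{{\bf 1}_{e \in G_0}}\, 2^{-\psi(e)} .
\]
In words: the translated difference is obtained from $\heart{G_1}{2} - \heart{G}{2}$ by reversing the sign of exactly those terms indexed by an ``active'' edge $e \in G_1 \Delta G$ lying in $G_0$. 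This identity drives the whole argument.

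\emph{Finite or cofinite $G_0$.} Here $H \mapsto H \Delta G_0$ is a continuous involution of $\G(V)$ (Lemma~\ref{Lconv}(3)) that restricts to a self-homeomorphism of $\G'(V)$, since perturbing a graph at a finite or cofinite edge set does not change whether it is finite or cofinite. Substituting $H_1 = G_1 \Delta G_0$ converts the limit defining $g'(H_0)$ into $\lim_{G_1 \to G,\ G_1 \in \G'(V)} (f(G_1) - f(G))/(\heart{G_1 \Delta G_0}{2} - \heart{G \Delta G_0}{2})$. By Lemma~\ref{Lconv}(4), once $G_1$ lies in a small enough neighbourhood of $G$ it agrees with $G$ on every edge $e$ with $\psi(e) \le \max_{e \in G_0}\psi(e)$ (resp.\ $\psi(e) \le \max_{e \notin G_0}\psi(e)$ when $G_0$ is cofinite); for such $G_1$ all active edges lie outside $G_0$ (resp.\ inside $G_0$), and the identity collapses to $\heart{G_1 \Delta G_0}{2} - \heart{G \Delta G_0}{2} = \pm(\heart{G_1}{2} - \heart{G}{2})$, with sign $+$ for finite $G_0$ and $-$ for cofinite $G_0$. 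Hence on a neighbourhood of $G$ the difference quotient for $g$ at $H_0$ is literally $\pm$ the difference quotient for $f$ at $G$, so $g'(H_0)$ exists and equals $\pm f'(G)$.

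\emph{$G_0 \in \G'(V)$ with $f'(G) \neq 0$.} Now both $G_0$ and $K_V \setminus G_0$ are infinite, and the plan is to exhibit two sequences $H_1^{(n)}, \widetilde H_1^{(n)} \in \G'(V) \setminus \{H_0\}$ converging to $H_0$ along which the difference quotient of $g$ tends to $f'(G)$ and to $-f'(G)$ respectively; since $f'(G) \neq -f'(G)$ this forces $g'(H_0)$ not to exist. I build them as $H_1^{(n)} = G_1^{(n)} \Delta G_0$, where $G_1^{(n)}$ agrees with $G$ off the infinite set $S_n := \{e \notin G_0 : \psi(e) > n\}$, disagrees with $G$ at the $\psi$-least element of $S_n$, and on the remaining elements of $S_n$ alternates (in $\psi$-order) between edge and non-edge. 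Then $G_1^{(n)}$ has infinitely many edges and infinitely many non-edges, and so does $G_1^{(n)} \Delta G_0$, because on $S_n \subset K_V \setminus G_0$ the translation $\Delta G_0$ is the identity; thus both lie in $\G'(V)$, with $G_1^{(n)} \to G$, $G_1^{(n)} \neq G$, hence $H_1^{(n)} \to H_0$, $H_1^{(n)} \neq H_0$. Since every active edge is outside $G_0$, the identity gives $\heart{H_1^{(n)}}{2} - \heart{H_0}{2} = \heart{G_1^{(n)}}{2} - \heart{G}{2}$, which is nonzero by injectivity of $\heart{\cdot}{2}$ on $\G'(V)$ together with Proposition~\ref{P5} (which separates $\G'(V)$ from the dyadic values attained at finite and cofinite graphs). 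Therefore the quotient for $g$ along $H_1^{(n)}$ equals $(f(G_1^{(n)}) - f(G))/(\heart{G_1^{(n)}}{2} - \heart{G}{2}) \to f'(G)$. Repeating the construction verbatim with $\widetilde S_n := \{e \in G_0 : \psi(e) > n\}$ in place of $S_n$ makes every active edge lie in $G_0$ (and $\Delta G_0$ now acts as complementation within $\widetilde S_n$, which still preserves infinitude of both edges and non-edges), so the identity gives $\heart{\widetilde H_1^{(n)}}{2} - \heart{H_0}{2} = -(\heart{\widetilde G_1^{(n)}}{2} - \heart{G}{2})$ and the quotient for $g$ along $\widetilde H_1^{(n)}$ tends to $-f'(G)$, completing the argument.

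The one genuinely delicate step is this last construction: one must perturb $G$ using edges taken entirely from $G_0$ (or entirely from its complement) while keeping both the perturbation and its $G_0$-translate inside $\G'(V)$. This is exactly where the hypothesis $G_0 \in \G'(V)$ is used — it makes both $G_0$ and $K_V \setminus G_0$ infinite — and the alternating pattern on $S_n$ (resp.\ $\widetilde S_n$) is precisely what guarantees the ``neither finite nor cofinite'' property survives the translation. The translation identity, the eventual coordinatewise agreement near $G$ in the finite/cofinite case, and the non-vanishing of all the denominators are then routine bookkeeping.
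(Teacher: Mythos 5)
Your proof is correct and follows essentially the same route as the paper's: your signed edge-by-edge identity is the paper's observation that translation by $G_0$ negates exactly the $G_0$-contribution to the weak norm, and your two sequences of perturbations supported in $K_V \setminus G_0$ and in $G_0$ play precisely the role of the paper's admissible sequences realizing the ratios $T(G_n)=+1$ and $T(G_n)=-1$. The alternating-pattern construction is just a cleaner substitute for the paper's ``admissible graph'' bookkeeping, which it uses to keep all relevant graphs in $\G'(V)$ and the denominators nonzero.
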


(Here $\G_\varepsilon(V) = \G_0(V)$ or $\G_1(V)$ denote the sets of
finite or co-finite graphs, respectively.)

\begin{proof}
If $G_0 \in \G_0(V) \cup \G_1(V)$ then it is not hard to see that for all
$G \in \G(V)$ and $G_1$ sufficiently close to $G$, $\heart{G_1 + G_0}{2}
- \heart{G + G_0}{2} = \pm (\heart{G_1}{2} - \heart{G}{2})$, with the
choice of sign equal to $+$ or $-$ depending on if $G_0 \in \G_0(V)$ or
$\G_1(V)$. It follows that $g'(G - G_0) = g'(G_0 - G) = \pm f'(G)$. (This
is akin to saying that if $g(x) = f(1 \pm x)$ for $x \in \R$, then $g'(0)
= \pm f'(1)$.)

Now suppose $G_0 \in \G'(V)$. Fix $G \in \G(V)$ and partition $K_V$ into
four components:
\[ A_1 := G \setminus G_0, \quad A_2 := G \cap G_0, \quad A_3 := G_0
\setminus G, \quad A_4 := K_V \setminus (G \cup G_0). \]

\noindent We now define a graph $G' \in \G(V)$ to be \textit{admissible
(with respect to $G,G_0$)} if whenever $A_j$ is infinite for $1 \leq j
\leq 4$, the sets $G' \cap A_j$ and $A_j \setminus G'$ are also infinite.
Then the following properties of admissible graphs are not hard to show:
(a) such graphs always exist; (b) if a graph $G'$ is admissible then so
is $G' \cap \psi^{-1}([n,\infty))$ for all $n \in \N$; and (c) if $G'$ is
admissible then $G', G+G', G-G_0+G' \in \G'(V)$. The last property can be
shown by considering the cases when $G$ is infinite or $K_V \setminus G$
is infinite, and similarly for $G-G_0$.

Now given a graph $G' \in \G(V)$, define $x_j(G') := \heart{G' \cap
A_j}{2}$ for $1 \leq j \leq 4$. Then for all $G' \in \G(V)$, we have:
\begin{equation}\label{Eratio2}
\frac{\heart{G + G'}{2} - \heart{G}{2}}{\heart{G - G_0 + G'}{2} -
\heart{G - G_0}{2}} = \frac{x_3(G') + x_4(G') - x_1(G') -
x_2(G')}{x_2(G') + x_4(G') - x_1(G') - x_3(G')}.
\end{equation}

Denote by $T(G')$ the quantity in~\eqref{Eratio2}.
Also note that $A_2 \cup A_3 = G_0$ and $A_1 \cup A_4 = K_V \setminus
G_0$ are both infinite sets of edges since $G_0 \in \G'(V)$. Thus we can
choose a sequence $G_n \in \G'(V)$ of graphs satisfying: (i) $G_n$ is
admissible with respect to $G, G_0$; (ii) $G_n \subset G_0 \cap
\psi^{-1}([n,\infty))$; and (iii) $x_2(G_n) \neq x_3(G_n)$ for all $n$.
For this sequence, we obtain $T(G_n) = -1$ for all $n$
in~\eqref{Eratio2}, whence using the hypothesis that $f'(G) \neq 0$, and
the properties of admissibility, we compute:
\[ \lim_{n \to \infty} \frac{g(G - G_0 + G_n) - g(G - G_0)}{\heart{G -
G_0 + G_n}{2} - \heart{G - G_0}{2}} \cdot \left( \frac{f(G + G_n) -
f(G)}{\heart{G + G_n}{2} - \heart{G}{2}} \right)^{-1} = \lim_{n \to
\infty} T(G_n) = -1. \]

Thus we must have $g'(G - G_0) = -f'(G)$ if the left-hand side exists.
Similarly, choose a sequence $G_n \in \G'(V)$ of graphs satisfying: (i)
$G_n$ is admissible with respect to $G, G_0$; (ii) $G_n \subset (K_V
\setminus G_0) \cap \psi^{-1}([n,\infty))$; and (iii) $x_1(G_n) \neq
x_4(G_n)$ for all $n$. For this sequence, we obtain $T(G_n) = 1$ for all
$n$ in~\eqref{Eratio2}, whence $g'(G - G_0)$ must equal $f'(G)$ if it
exists. We conclude that $g'(G - G_0)$ does not exist.
\end{proof}

\begin{remark}
We also note for completeness that a different candidate for the
definition of the derivative could also be explored, namely:
\[ (D'f)(G) := \lim_{G_1 \to G,\ G_1, \in \G'(V)} \frac{f(G_1) -
f(G)}{\heart{G_1 - G}{2}}. \]

\noindent Indeed, this formula is a special case of the notion of a
derivative in an arbitrary metric space. This candidate for the
derivative suffers from the drawback that if $(D'f)(G)$ exists for $G \in
\G'(V)$, then by considering sequences of graphs $G_n \uparrow G$ and
$G'_n \downarrow G$ in $\G'(V)$, it necessarily follows that $(D'f)(G) =
0$. For this reason we do not work with $D'f$ in the present paper.
\end{remark}
%}}}

%{{{1 Section 3.3 - Examples
\subsection{Examples}

We next discuss examples of functions on graph space which are
differentiable (together with their derivatives). Our first family of
examples comes from the $\ell^1_+$-family of metrics described above.

\begin{prop}\label{Pexample}
Suppose $V$ is countable, $\psi : K_V \to \N$ is a fixed bijection, $G_0
\in \G_0(V) \cup \G_1(V)$, and $\varphi \in \ell^1_+(K_V)$. Define $f(G)
:= d_\varphi(G_0,G)$. Then the following are equivalent:
\begin{enumerate}
\item $c_\varphi := \lim_{n \to \infty} 2^n \varphi(\psi^{-1}(n))$
exists.
\item $f'(G)$ exists for some $G$ such that $G \Delta G_0 \in \G_0(V)
\cup \G_1(V)$.
\end{enumerate}
In this case, $f'(G)$ exists whenever $G \Delta G_0 \in \G_0(V) \cup
\G_1(V)$, in which case
\begin{equation}\label{Ederexample}
f'(G) = \begin{cases} c_\varphi, & \text{ if } G, G \Delta G_0 \in
\G_0(V) \text{ or } G, G \Delta G_0 \in \G_1(V),\\
-c_\varphi, & \text{ if } (G, G \Delta G_0) \in \G_i(V) \times
\G_{1-i}(V) \text{ for some } i=0,1. \end{cases}
\end{equation}

\noindent However, $f'(G)$ need not exist (or be nonzero if it exists),
if $G \in \G'(V)$.
\end{prop}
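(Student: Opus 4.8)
The plan is to analyze the difference quotient defining $f'(G)$ directly, using the piecewise structure of $\heart{.}{2}$. Write $f(G) = d_\varphi(G_0, G) = \sum_{e \in G \Delta G_0} \varphi(e)$. For a graph $G_1$ near $G$ (with $G_1 \in \G'(V)$), the symmetric difference $G_1 \Delta G_0$ differs from $G \Delta G_0$ only in the "tail" coordinates $\psi^{-1}(n), \psi^{-1}(n+1), \dots$ for large $n$; more precisely, $(G_1 \Delta G_0) \Delta (G \Delta G_0) = G_1 \Delta G$, which is supported on high-$\psi$ edges when $G_1 \to G$. So the numerator $f(G_1) - f(G)$ equals $\sum_{e} \pm \varphi(e)$, the sum over $e \in G_1 \Delta G$, with the sign at $e$ being $+$ if $e \notin G \Delta G_0$ (so the edge is being "added" to the symmetric difference with $G_0$) and $-$ if $e \in G \Delta G_0$. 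The denominator $\heart{G_1}{2} - \heart{G}{2}$ is $\sum_{e \in G_1 \setminus G} 2^{-\psi(e)} - \sum_{e \in G \setminus G_1} 2^{-\psi(e)}$, again a signed sum over $e \in G_1 \Delta G$.

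First I would establish $(1) \implies (2)$ together with the formula \eqref{Ederexample}. Assume $c_\varphi$ exists, and take $G$ with $G \Delta G_0 \in \G_0(V) \cup \G_1(V)$; then for $G_1$ close enough to $G$, all edges of $G_1 \Delta G$ lie beyond the last (or past the first) "flip" of $G \Delta G_0$, so the sign $\sigma(e)$ that attaches $\pm \varphi(e)$ to the numerator is \emph{constant} in $e$ on this tail — equal to $+1$ if $G \Delta G_0$ is finite and $-1$ if it is cofinite. Hence $f(G_1) - f(G) = \sigma \sum_{e \in G_1 \Delta G} \varphi(e)$. Comparing with the denominator, the ratio becomes
\[
f'(G) = \sigma \cdot \lim_{G_1 \to G} \frac{\sum_{e \in G_1 \Delta G} \varphi(e)}{\sum_{e \in G_1 \setminus G} 2^{-\psi(e)} - \sum_{e \in G \setminus G_1} 2^{-\psi(e)}}.
\]
The key lemma needed here is: if $c_\varphi = \lim_n 2^n \varphi(\psi^{-1}(n))$ exists, then for any sequence of finite nonempty edge sets $S_n$ with $\min_{e \in S_n} \psi(e) \to \infty$, one has $\big(\sum_{e \in S_n} \varphi(e)\big) \big/ \big(\sum_{e \in S_n} 2^{-\psi(e)}\big) \to c_\varphi$ — a standard "Stolz–Cesàro / weighted average" estimate. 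This handles the case where $G_1 \supset G$ or $G_1 \subset G$ near $G$; the general case, where $G_1 \Delta G$ splits into $G_1 \setminus G$ and $G \setminus G_1$, needs Proposition \ref{P5}(4): in the lexicographic order the denominator has the sign determined by whether $G_1 \lex G$ or not, and one checks (as in the displayed remark after Definition \ref{Dder}) that the numerator carries a matching sign-correction, so the ratio still tends to $\sigma c_\varphi$. Matching $\sigma$ against the four cases of which of $G, G \Delta G_0$ are finite versus cofinite gives exactly \eqref{Ederexample}; note $G, G\Delta G_0$ cannot lie one in $\G_0$ and the other in $\G_1$ unless... actually they can, so the four-case split in \eqref{Ederexample} is correctly stated via the parities.

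Next, $(2) \implies (1)$: suppose $f'(G)$ exists for some such $G$. Take two comparison sequences: $G_1^{(n)} := G \cup \{\psi^{-1}(k_n)\}$ for a single large index $k_n \to \infty$ not in $G$, and more refined ones. For the single-edge perturbation, the difference quotient is $\pm \varphi(\psi^{-1}(k_n)) / (\pm 2^{-\psi(\psi^{-1}(k_n))}) = \pm 2^{k_n} \varphi(\psi^{-1}(k_n))$; since $k_n$ ranges over all sufficiently large integers, convergence of $f'(G)$ forces $2^n \varphi(\psi^{-1}(n))$ to converge, i.e., $c_\varphi$ exists. (One must be slightly careful that $G \in \G'(V)$ is not required here — but $G \Delta G_0$ is finite or cofinite, and $G$ itself can be taken in $\G'(V)$ by choosing $G$ appropriately, or handled directly; in any case single-edge tail perturbations of $G$ land in $\G'(V)$ when $G$ is, and otherwise a routine adjustment works.) Finally, for the last sentence — that $f'(G)$ need not exist, or may be zero, when $G \in \G'(V)$ — I would give the explicit counterexample: with $\varphi = \varphi_2$ (so $c_\varphi = 1$) and $G_0 = {\bf 0}$, take $G \in \G'(V)$ to be, say, $\psi^{-1}(\text{evens})$; then choosing $G_1$ to add a tail edge versus to delete a tail edge flips the sign of the numerator of $f(G_1)-f(G)$ relative to the denominator (because deleting an edge of $G$ from the symmetric difference with ${\bf 0}$ decreases $f$, while in lex order it may increase or decrease the denominator), producing subsequential limits $+1$ and $-1$; hence $f'(G)$ does not exist. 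A variant with a $\varphi$ tuned so that the contributions cancel gives $f'(G) = 0$.

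The main obstacle I expect is the bookkeeping of signs in the general (non-monotone) difference quotient — reconciling the sign $\sigma$ coming from the parity of $G \Delta G_0$ with the lexicographic-order sign of the denominator from Proposition \ref{P5}(4), uniformly over all $G_1$ in a neighborhood. Once the clean monotone case ($G_1 \supset G$ or $G_1 \subset G$) is settled via the Stolz–Cesàro-type lemma, the general case should follow by splitting $G_1 \Delta G = (G_1 \setminus G) \sqcup (G \setminus G_1)$ and observing both pieces have $\min \psi \to \infty$, but verifying that the two-sided limit (rather than just the monotone ones) exists and equals $\sigma c_\varphi$ requires the observation that in \eqref{Eratio2}-style notation the relevant ratio is a \emph{weighted difference of averages} each converging to $c_\varphi$, so it still converges to $c_\varphi$ regardless of the split — this is where one leans on $c_\varphi$ actually existing (as opposed to merely $\limsup/\liminf$ being finite), which is precisely why the equivalence with $(1)$ is sharp.
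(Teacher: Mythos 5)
There are genuine gaps in two of the three parts. The most serious is the final assertion (behaviour at $G \in \G'(V)$): your proposed counterexample takes $\varphi = \varphi_2$ and $G_0 = {\bf 0}$, but then $f(G) = d_{\varphi_2}({\bf 0}, G) = \heart{G}{2}$, so the difference quotient $\frac{f(G_1)-f(G)}{\heart{G_1}{2}-\heart{G}{2}}$ is identically $1$ and $f'$ exists (and equals $1$) everywhere. Your sign analysis is mistaken: deleting a tail edge decreases the numerator \emph{and} the denominator by the same amount $2^{-\psi(e)}$, so no sign flip occurs. The paper explicitly notes that scalar multiples of $\heart{\cdot}{2}$ are exactly the case where $f'\equiv \pm c_\varphi$ everywhere; a genuine counterexample requires a $\varphi$ whose ratio to $2^{-\psi(\cdot)}$ converges but oscillates at different rates along different residue classes (the paper builds two such $\varphi_j$ and shows the corresponding $f_j$ cannot both be differentiable with nonzero derivative at a particular $G\in\G'(V)$). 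Your "variant with contributions tuned to cancel" is not substantiated either, so this part of the statement is unproved.

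Second, in $(2)\Rightarrow(1)$ your test perturbations $G_1^{(n)} = G \cup \{\psi^{-1}(k_n)\}$ are inadmissible: since $G_0$ and $G\Delta G_0$ both lie in $\G_0(V)\cup\G_1(V)$, so does $G = G_0 \Delta (G\Delta G_0)$, hence $G$ is \emph{never} in $\G'(V)$ here and a single-edge perturbation of it is never in $\G'(V)$; the definition of $f'(G)$ only permits $G_1\in\G'(V)$. Your hedge ("single-edge tail perturbations land in $\G'(V)$ when $G$ is") rests on a case that cannot occur, and the "routine adjustment" is exactly where the work lies — the paper uses infinite perturbations $\psi^{-1}(N+n+2\N)$ and then a ratio-of-tail-sums (Stolz--Ces\`aro-type) argument to recover termwise convergence of $2^n\varphi(\psi^{-1}(n))$. (One can alternatively pad your single edge with a far-away infinite tail of negligible $\varphi$- and $2$-adic mass, but that must be said.) Finally, in $(1)\Rightarrow(2)$ your claim that a "weighted difference of averages each converging to $c_\varphi$ still converges to $c_\varphi$ regardless of the split" is false in general: if the denominator is $x_A - x_B$ with $x_A\approx x_B$, the ratio can blow up. What saves the argument is precisely that $G\in\G_0(V)\cup\G_1(V)$, so for $G_1$ close to $G$ the set $G_1\Delta G$ lies entirely in $G_1\setminus G$ (or entirely in $G\setminus G_1$), the two-sided case is vacuous, and only the clean one-signed weighted-average estimate is needed — this reduction, which is the paper's route, should be made explicit rather than replaced by the incorrect general lemma.
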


\noindent The result can be thought of as akin to computing the
derivative of the metric function $f(x) = |x-a| : \R \to \R$ for any $a
\in \R$. Note in the above result that both $d_\varphi(-)$ and
$d_{\varphi_2}({\bf 0}, -) = \heart{\cdot}{2}$ are nondecreasing
functions in the lexicographic order on $\G(V)$ (as in Proposition
\ref{P5}(4)).

\begin{proof}
We mention at the outset of this proof that since $G \Delta G_0, G_0$
(and hence $G$) all lie in $\G_0(V) \cup \G_1(V)$, to compute derivatives
in this proof it is equivalent to take limits as either $G_1 \to {\bf
0}$, or as $G_1 \to G$ or $G_1 \to G \Delta G_0$ (with $G_1 \in \G'(V)$).
Now suppose $c_\varphi$ exists, and $G \in \G(V)$ is such that $G \Delta
G_0 \in \G_0(V) \cup \G_1(V)$. We show that~\eqref{Ederexample}
holds; in particular, this shows that $(1) \implies (2)$.
Given a graph $G' \in \G_0(V) \cup \G_1(V)$, define $n_{G'} := \max
\psi^{-1}(G')$ if $G' \in \G_0(V)$, and $\max \psi^{-1}(K_V \setminus
G')$ if $G' \in \G_1(V)$. Note that $c_\varphi \geq 0$. Now given
$\epsilon > 0$, choose an integer $N \gg 0$ such that $N > \max(n_{G
\Delta G_0}, n_G)$ (since $G \in \G_0(V) \cup \G_1(V)$) and $2^n
\varphi(\psi^{-1}(n)) \in [\max(0,c_\varphi-\epsilon), c_\varphi +
\epsilon]$ for all $n>N$. Now compute the derivative $f'(G)$ by
considering $G_1 \to {\bf 0}$ with $G_1 \in \G'(V)$ and $\heart{G_1}{2} <
2^{-N}$. By choice of $N$, it follows in both of the aforementioned cases
that
\[
\frac{f(G+G_1) - f(G)}{\heart{G+G_1}{2} - \heart{G}{2}} =
\frac{d_\varphi(G \Delta G_0, G_1) - d_\varphi(G \Delta G_0,{\bf
0})}{\heart{G \Delta G_1}{2} - \heart{G}{2}} = \pm \frac{\sum_{e \in G_1}
\varphi(e)}{\sum_{e \in G_1} 2^{-\psi(e)}},
\]

\noindent where the choice of signs is as specified
in~\eqref{Ederexample}. Note that the ratio of the two sums (without the
signs) lies in $[\max(0,c_\varphi-\epsilon), c_\varphi + \epsilon]$ since
$\heart{G_1}{2} < 2^{-N}$. This proves the assertion if $G \Delta G_0$ is
finite or co-finite.

We next show that $(2) \implies (1)$. Suppose $f'(G)$ exists with $G
\Delta G_0$ finite or co-finite. Let $N := \max (n_{G \Delta G_0}, n_G)$
and consider $G_n := \psi^{-1}(N + n + 2\N) \in \G'(V)$. Then,
\[
\frac{f(G+G_n) - f(G)}{\heart{G+G_n}{2} - \heart{G}{2}} =
\frac{d_\varphi(G \Delta G_0, G_n) - d_\varphi(G \Delta G_0,{\bf
0})}{\heart{G \Delta G_n}{2} - \heart{G}{2}} = \pm \frac{\sum_{k \in \N}
\varphi(\psi^{-1}(N+n-1+2k))}{\sum_{k \in \N} 2^{-(N+n-1+2k)}},
\]

\noindent with the sign as in~\eqref{Ederexample} remaining unchanged for
all $n$. Call the previous ratio (without the signs) $a_n$, and consider
the odd and even terms of the convergent sequence $\{ a_n : n \in \N \}$
separately. Each of these subsequences comprise ratios of tail sums of
convergent series $\sum_{k \geq m} b_k$ and $c \sum_{k \geq m} 4^{-k}$,
say. Now the subsequences converge, whence
\[
\left| \frac{\sum_{k \geq m} b_k}{c \sum_{k \geq m} 4^{-k}} -
\frac{\sum_{k \geq m+1} b_k}{c \sum_{k \geq m+1} 4^{-(k+1)}} \right| \to
0
\]

\noindent as $m \to \infty$. Taking common denominators, we obtain:
\[ \frac{4^{-m}}{\sum_{k \geq m} 4^{-k}} \cdot \left| \frac{b_m}{c4^{-m}}
- \frac{\sum_{k \geq m+1} b_k}{c \sum_{k \geq m+1} 4^{-k}} \right| =
\frac{3}{4} \left| \frac{b_m}{c4^{-m}} - \frac{\sum_{k \geq m+1} b_k}{c
\sum_{k \geq m+1} 4^{-k}} \right| \to 0. \]

\noindent It follows that $b_m/(c 4^{-m}) \to \lim_n a_n$ as $m \to
\infty$. In particular, the sequences $2^{N+2k} \varphi(\psi^{-1}(N+2k))$
and $2^{N-1+2k} \varphi(\psi^{-1}(N-1+2k))$ both converge, and to the
common limit $\lim_{n \to \infty} a_n$. Hence $\lim_{n \to \infty} a_n =
\lim_{n \to \infty} 2^n \varphi(\psi^{-1}(n)) = c_\varphi$ exists.

Finally, we study if $f'(G)$ exists and equals $\pm c_\varphi$ for all $G
\in \G(V)$. Note that this indeed happens on occasion -- for instance, if
the function $\varphi(-)$ is a scalar multiple of $\heart{\cdot}{2}$ on
$K_V$, and $G_0 \in \G_0(V) \cup \G_1(V)$, then clearly $f'(G) = \pm
c_\varphi \ \forall G \in \G(V)$. However, we now show that if $G \Delta
G_0 \in \G'(V)$, then $f'(G)$ may not exist or equal $\pm c_\varphi$ for
general $\varphi$. Specifically, take $G_0 := {\bf 0}$ and $G :=
\psi^{-1}(\N \setminus (-3+4\N)) \subset K_V$, so that $G \Delta G_0 \in
\G'(V)$. Define $G_n := \{ \psi^{-1}(2), \psi^{-1}(4), \dots,
\psi^{-1}(4n);\ \psi^{-1}(4n-3) \} \cup (-1+4\N)$, and $\varphi_2,
\varphi_3 \in \ell^1_+(K_V)$ via:
\[
\varphi_j(\psi^{-1}(4n-3)) := (1-(4n-3)^{-j}) \cdot 2^{-4n}/3,\ j =
2,3,
\]

\noindent and $\varphi_2(\psi^{-1}(m)) = \varphi_3(\psi^{-1}(m)) :=
2^{-m}$ for all other $m \in \N$. Therefore $c_{\varphi_j} = 1$ for
$j=2,3$; moreover, $G_n \in \G'(V)$ converges to $G$. Now set $f_j(G) :=
d_{\varphi_j}(G,{\bf 0})$. If $f'_j(G)$ exists and is nonzero for
$j=2,3$, then we compute:
\[
0 \neq \frac{f'_3(G)}{f'_2(G)} = \lim_{n \to \infty}
\frac{d_{\varphi_3}(G_n, {\bf 0}) - d_{\varphi_3}(G,{\bf
0})}{d_{\varphi_2}(G_n, {\bf 0}) - d_{\varphi_2}(G,{\bf 0})} = \lim_{n
\to \infty} \frac{(1 - (4n-3)^{-3}) (2^{-4n}/3) - 2^{-2-4n} \cdot
(4/3)}{(1 - (4n-3)^{-2}) (2^{-4n}/3) - 2^{-2-4n} \cdot (4/3)} = 0,
\]

\noindent which is impossible. It follows that at least one of
$f_{\varphi_2}, f_{\varphi_3}$ is not differentiable at $G \in \G'(V)$
with nonzero derivative, even though $c_{\varphi_2} = c_{\varphi_3} = 1$.
\end{proof}

Note that more examples can be generated from differentiable functions by
using the standard rules of differentiation (which also hold for graph
space, as discussed in Lemma \ref{Lchain}). We now discuss a third
example, which can be obtained by adapting the $p$-adic norm to $\G(V)$.
The proof of the following result is omitted for brevity.

\begin{cor}\label{Cconst}
If $f : \G'(V) \to \R$ is locally constant, then $f$ is differentiable on
$\G'(V)$, and $f' \equiv 0$ on $\G'(V)$. For instance, fix a function
$\zeta : K_V \to (0,\infty)$ such that $\zeta(\psi^{-1}(n)) \to 0$ as $n
\to \infty$ and $\zeta(K_V)$ has no other accumulation points. Now define
$\normi{G} := \min_{e \in G} \zeta(e)$ and $\normi{{\bf 0}} := 0$. Then
$\normi{.}$ is locally constant and hence has derivative zero.
\end{cor}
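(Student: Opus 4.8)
This statement has two parts: the general fact that a locally constant function on $\G'(V)$ is differentiable with vanishing derivative, and the observation that $\normi{.}$ is such a function. The plan is to prove them in this order, the second being a direct instance of the first.

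For the first part, fix a locally constant $f : \G'(V) \to \R$ and a graph $G \in \G'(V)$. Since $G$ is by definition neither finite nor cofinite we have $G \neq {\bf 0}, K_V$, so $f'(G)$ is defined by Definition \ref{Dder} as the limit of the difference quotients $(f(G_1) - f(G))/(\heart{G_1}{2} - \heart{G}{2})$ over $G_1 \in \G'(V)$ with $G_1 \to G$ (a genuine limit over a nonempty family, since $\G'(V)$ is dense in the perfect space $\G(V)$ by Proposition \ref{Ttop}, hence has no isolated points). By local constancy there is an open neighbourhood $U$ of $G$ in $\G'(V)$ on which $f$ equals $f(G)$ identically. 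I would then observe that for every $G_1 \in U$ the numerator $f(G_1) - f(G)$ is $0$, while for every $G_1 \neq G$ the denominator $\heart{G_1}{2} - \heart{G}{2}$ is nonzero because $\heart{.}{2}$ is injective on $\G'(V)$ (Proposition \ref{Thomeo}, equivalently the ``bijection outside finite graphs'' part of Proposition \ref{P5}(3)). Hence the difference quotient is identically $0$ on the punctured neighbourhood $U \setminus \{G\}$, so the limit exists and $f'(G) = 0$. The one place care is needed is this nonvanishing of the denominator, and it is exactly the reason the derivative is only set up on $\G'(V)$: on the finite and cofinite graphs $\heart{.}{2}$ is not injective.

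For the second part it suffices, by the first, to check that $\normi{.}$ is locally constant on $\G'(V)$; in fact I would prove this on all of $\G(V) \setminus \{{\bf 0}\}$. The hypotheses on $\zeta$ enter as follows. First, since $\zeta(\psi^{-1}(n)) \to 0$, the level set $S_c := \{ e \in K_V : \zeta(e) \geq c \}$ is finite for every $c > 0$; second, since $\zeta(K_V)$ has no accumulation point other than $0$, for any $G \neq {\bf 0}$ the value $c := \normi{G}$ is positive and is attained at some edge lying in $G \cap S_c$. I would then check that $\normi{G'} = c$ for every graph $G'$ with $G' \cap S_c = G \cap S_c$: the extremal edge of $G$ still belongs to $G'$, and no edge of $G'$ outside the finite set $S_c$ can move the extremum past the level $c$. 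Since the condition $G' \cap S_c = G \cap S_c$ only constrains finitely many edge-indicators, it carves out an open neighbourhood of $G$; thus $\normi{.}$ is locally constant away from ${\bf 0}$, in particular on $\G'(V)$, and by the first part its derivative there is $0$. (The standard differentiation rules one might also want for such examples are the routine transcriptions of the one-variable proofs, as in Lemma \ref{Lchain}.)

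The main, and essentially only, obstacle is the bookkeeping in the second part: one has to read off from the accumulation-point hypothesis the finite set $S_c$ of edges that genuinely determine $\normi{G}$, and then verify that altering $G$ away from $S_c$ changes neither the extremal edge nor its $\zeta$-value. Everything in the first part is immediate once the injectivity of $\heart{.}{2}$ on $\G'(V)$, from Propositions \ref{Thomeo} and \ref{P5}, has been invoked.
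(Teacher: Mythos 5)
The paper explicitly omits its own proof of this corollary, so there is nothing to compare against; judged on its own, your argument for the main assertion is correct and is surely the intended one. On a punctured neighbourhood of $G \in \G'(V)$ inside $\G'(V)$ the difference quotient in Definition \ref{Dder} has zero numerator (local constancy) and nonzero denominator (injectivity of $\heart{.}{2}$ on $\G'(V)$, i.e.\ Proposition \ref{P5}(3) or Proposition \ref{Thomeo}), so the limit exists and equals $0$; you also correctly note that the limit is taken over a nonempty punctured neighbourhood and that this injectivity is precisely why the derivative is only set up away from the finite and cofinite graphs.

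One caveat on the example. As literally written, $\normi{G} = \min_{e \in G} \zeta(e)$ is not attained for infinite $G$: since $\zeta(\psi^{-1}(n)) \to 0$, the infimum over any infinite edge set is $0$ and is never achieved, so your assertion that ``$c := \normi{G}$ is positive and is attained at some edge lying in $G \cap S_c$'' is false for the stated definition. Your argument is, however, exactly the correct proof for $\normi{G} = \max_{e \in G} \zeta(e)$, which is evidently what the paper intends: that is the reading under which $\zeta_p$ reproduces the $p$-adic norm $p^{-\min \psi(G)}$, and under which $d_\zeta(G,G') = \normi{G - G'}$ is a genuine metric as claimed in the subsequent Remark (with $\min$ it would vanish whenever $G \Delta G'$ is infinite). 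With $\max$ in place of $\min$ your bookkeeping is right: $S_c = \{ e : \zeta(e) \geq c \}$ is finite because $\zeta(\psi^{-1}(n)) \to 0$, the maximum over $G$ is attained in $G \cap S_c$, every edge outside $S_c$ has $\zeta$-value strictly below $c$, and the condition $G' \cap S_c = G \cap S_c$ cuts out a cylinder neighbourhood of $G$ on which $\normi{\cdot} \equiv c$. So the proposal proves the intended statement; you should just flag, rather than silently repair, the $\min$/$\max$ discrepancy.
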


\noindent For example, one can choose $\zeta_p(\psi^{-1}(n)) := p^{-n}$
to be the ``$p$-adic norm'' for $p>1$. The function $\zeta_3$ was used in
\cite{Cam} to identify graph space $\G(V)$ with the Cantor set. Also note
that homomorphism indicators from finite graphs -- and more generally the
functions $f_{I_0, I_1}$ studied in Theorem \ref{Tcounting} -- are
locally constant, hence have zero derivative by Corollary \ref{Cconst}.

\begin{remark}
The functions $\normi{.}$ are of independent interest, being not only
locally constant but also translation-invariant metrics on labelled graph
space. More precisely, define $\ell^\infty_+(K_V)$ to be the set of all
$\zeta : K_V \to (0,\infty)$ which satisfy the conditions in the
statement of Corollary \ref{Cconst}. Then the map $d_\zeta(G,G') :=
\normi{G-G'}$ is a translation-invariant metric on $\G(V)$ which once
again metrizes the product topology. In fact, the following more general
result is true, and the proof follows using standard topological
arguments.

\begin{prop}\label{Pmixed}
Similar to the $I = K_V$ case, define $\ell^p_+(I)$ for $p = 1,\infty$
and any subset $I \subset K_V$. Now given a partition $K_V = I_0 \coprod
I_1$, as well as $\varphi \in \ell^1_+(I_0)$ and $\zeta \in
\ell^\infty_+(I_1)$, define the corresponding ``mixed norm'' to be:
\[ d_{\varphi,\zeta}(G,G') := \normo{(G - G') \cap I_0} + \normi{(G - G')
\cap I_1}. \]

\noindent Then all mixed norms $\{ d_{\varphi, \zeta} : I_0 \subset K_V,\
\varphi \in \ell^1_+(I_0),\ \zeta \in \ell^\infty_+(K_V \setminus I_0)
\}$ are translation-invariant metrics on $\G(V)$ which metrize the
product topology (and hence are topologically equivalent).
\end{prop}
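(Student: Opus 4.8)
The plan is to verify translation-invariance together with the three metric axioms, and then to identify the resulting topology with the product topology; the argument runs parallel to the cases already treated for the metrics $d_\varphi$ (Proposition~\ref{Pnorms}) and for the pure $\ell^\infty_+$ metrics $d_\zeta$ (Corollary~\ref{Cconst} and the Remark following it). Translation-invariance is immediate: in the group $\G(V) = (\Z/2\Z)^{K_V}$ (Proposition~\ref{Ttop}) one has $(G+A)\Delta(G'+A) = G\Delta G'$ for every $A$, while $d_{\varphi,\zeta}$ is by definition a function of $G-G' = G\Delta G'$ alone; symmetry and non-negativity are equally immediate from $G\Delta G' = G'\Delta G$ and the positivity of $\varphi,\zeta$.

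For the remaining axioms I would first record the elementary properties of the two building blocks, restricted along the partition $K_V = I_0\coprod I_1$. On $I_0$ the quantity $\normo{\cdot}$ is a sum of positive terms, hence monotone and subadditive in its set argument, $\normo{A}\leq\normo{A\cup B}\leq\normo{A}+\normo{B}$, with $\normo{A}=0$ iff $A=\emptyset$; and since $\sum_{e\in I_0}\varphi(e)<\infty$, the tail $\normo{I_0\setminus F}$ is small for large finite $F\subset I_0$. On $I_1$ the quantity $\normi{\cdot}$ is monotone with $\normi{A\cup B}=\max(\normi{A},\normi{B})$, vanishes exactly on $A=\emptyset$, and, crucially, the defining property of $\ell^\infty_+(I_1)$ forces $\{e\in I_1:\zeta(e)\geq\epsilon\}$ to be finite for every $\epsilon>0$. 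Given these, identity of indiscernibles is clear, since $d_{\varphi,\zeta}(G,G')=0$ forces both $(G\Delta G')\cap I_0$ and $(G\Delta G')\cap I_1$ to be empty. For the triangle inequality I would use the inclusion $G\Delta G''\subseteq(G\Delta G')\cup(G'\Delta G'')$, intersect it with $I_0$ and with $I_1$ separately, and apply subadditivity of $\normo{\cdot}$ on the first piece and $\normi{A\cup B}=\max(\normi{A},\normi{B})\leq\normi{A}+\normi{B}$ on the second; summing the two estimates yields $d_{\varphi,\zeta}(G,G'')\leq d_{\varphi,\zeta}(G,G')+d_{\varphi,\zeta}(G',G'')$.

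Finally, to see that $d_{\varphi,\zeta}$ metrizes the product topology, note that both topologies are first-countable (the product topology by Proposition~\ref{Pnorms}, and the $d_{\varphi,\zeta}$-topology by the previous paragraph), so it suffices to show they have the same convergent sequences, i.e.\ that $d_{\varphi,\zeta}(G_n,G)\to 0$ if and only if $G_n\to G$ in the sense of Definition~\ref{D2}. By translation-invariance and continuity of $H\mapsto H-G$ (Lemma~\ref{Lconv}(3)) one reduces to $G={\bf 0}$, so the claim becomes: $\normo{G_n\cap I_0}+\normi{G_n\cap I_1}\to 0$ iff $G_n\to{\bf 0}$. If $G_n\to{\bf 0}$, then given $\epsilon>0$ choose a finite $F_0\subset I_0$ with $\normo{I_0\setminus F_0}<\epsilon/2$ and put $F_1:=\{e\in I_1:\zeta(e)\geq\epsilon/2\}$, which is finite; once $G_n$ is disjoint from the finite set $F_0\cup F_1$ (eventually, by Lemma~\ref{Lconv}(4)) one gets $\normo{G_n\cap I_0}\leq\normo{I_0\setminus F_0}<\epsilon/2$ and $\normi{G_n\cap I_1}\leq\epsilon/2$, hence $d_{\varphi,\zeta}(G_n,{\bf 0})<\epsilon$. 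Conversely, for a fixed edge $e_0$, the membership $e_0\in G_n$ forces $d_{\varphi,\zeta}(G_n,{\bf 0})\geq\varphi(e_0)$ or $\geq\zeta(e_0)$ according as $e_0\in I_0$ or $e_0\in I_1$, so $d_{\varphi,\zeta}(G_n,{\bf 0})\to 0$ implies $e_0\notin G_n$ eventually, i.e.\ $G_n\to{\bf 0}$. Since the common topology is the product topology, which does not depend on $I_0,\varphi,\zeta$, all mixed norms are automatically topologically equivalent. None of this is deep; the only steps needing genuine care are the finiteness of $F_1$ — which is exactly where the hypothesis $\zeta\in\ell^\infty_+(I_1)$ enters — and tracking the $I_0$- and $I_1$-components in parallel throughout, particularly when splitting each symmetric difference along $K_V=I_0\coprod I_1$ in the triangle inequality.
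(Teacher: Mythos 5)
Your proof is correct, and it is precisely the ``standard topological argument'' that the paper alludes to but omits: verify the metric axioms componentwise along the partition $K_V = I_0 \coprod I_1$, then match convergent sequences with the product topology via first-countability. One small point worth noting: your key identity $\normi{A \cup B} = \max(\normi{A}, \normi{B})$ tacitly reads $\normi{A}$ as $\max_{e \in A} \zeta(e)$ rather than the $\min$ literally written in Corollary \ref{Cconst}; this is indeed the intended (and only workable) reading --- the analogue of the $p$-adic valuation --- since the minimum is not attained on infinite edge sets and the $\min$-version would fail to metrize the product topology.
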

\end{remark}

Note that the above theory of differentiation on $\G(V)$ contrasts the
situation in the unlabelled setting, where graphon space is
path-connected and convex, so that one directly uses G\^ateaux
derivatives instead of operating through a homeomorphism to $\R$. In that
case it is more standard to state and prove a First Derivative Test using
G\^ateaux derivatives. Moreover, in the graphon setting homomorphism
densities are not locally constant (as they are in $\G(V)$), and their
derivatives have been carefully explored in joint work \cite{DGKR} with
Diao and Guillot.

\begin{remark}
Similar results (as in this section) can be obtained by using other
homeomorphisms from a subset of $\G(V)$ onto its image in $\R$. If we fix
our domain as $\G'(V)$, then any two such maps ``differ'' by a
self-homeomorphism of $[0,1] \setminus \bbd_2$.
%The group of such self-homeomorphisms is quite large: it can easily be
%shown to contain: the group (under composition) of strictly increasing
%functions $f : [0,1] \to [0,1]$ that preserve $\bbd_2$ and fix $0,1$.
%This includes all piecewise linear functions with all slopes rational,
%that are strictly increasing and fix $0,1$. The group of
%self-homeomorphisms also includes the function $f(x) = 1-x$, and an
%infinite group of piecewise linear functions $: [0,1] \to [0,1]$ with
%all slopes $1$, that fix $0$.
\end{remark}
%}}}

%{{{1 Section 3.4 - Invariance of local extrema under choice of edge-labelling
\subsection{Invariance of local extrema under choice of
edge-labelling}\label{Slabel}

In order to examine the local extreme values of a given function $f :
\G(V) \to \R$, one approach is to proceed via the First Derivative Test
on graph space. This approach consists of the following steps:
\begin{enumerate}
\item Fix a labelling of the edges, which is a bijection $\psi : K_V \to
\N$. One advantage of the First Derivative Test as in Theorem \ref{Tfder}
is that any bijection will suffice, as is explained presently.

\item Now compute $f'(G)$ at a point $G \neq {\bf 0}, K_V$ via the
definition, and solve the equation: $f'(G) = 0$.

\item To compute whether or not this is a local maximum or minimum, also
use the Second Derivative Test.
\end{enumerate}

A natural concern that may arise is regarding how the topological and
differential structure of graph space (such as the determination of local
extreme values) depends on a specific choice of vertex- or
edge-labelling. Indeed, note that there is a large symmetry group that
acts on $\G(V)$, consisting of all vertex relabellings - i.e., the
permutations of $V$. This is parallel to the unlabelled setting of
graphons, where one works with the group of all measure-preserving
bijections of $[0,1]$ (or more generally, weak isomorphisms).

In the labelled setting of $\G(V)$, we now study not just the
permutations of $V$ but also the larger group $S_{K_V}$ of permutations
of the edge set $K_V$. The first observation is that $S_{K_V}$ leaves
unchanged the topology of $\G(V)$, since any two labellings induce
topologically equivalent metrics by (the remarks following) Proposition
\ref{Pnorms}. Second, the determination of local extreme values is also
independent of the edge-labelling. More precisely, any bijection can be
chosen in the first step mentioned above. Indeed, this is clear because
local extreme points are topological in nature.
%so that local extrema of $f : \G(V) \to \R$ coincide under any two
%edge-labellings.

Given this information about the critical points of functions on graph
space, a question that naturally arises is how edge-labellings influence
the \textit{differential} structure of graph space and functions defined
on it. To answer this question we need some notation.

\begin{definition}
Define $S_\infty := \lim_{n \to \infty} S_{\psi^{-1}(\{ 1, \dots, n \})}
= \bigcup_{n \in \N} S_{\psi^{-1}(\{ 1, \dots, n \})}$ to be the set of
permutations of $K_V$, which fix all but finitely many edges.
\end{definition}

Note that the set $S_\infty$ is a proper normal subgroup of the group
$S_{K_V}$ of permutations of $K_V$. Moreover, once the vertex set $V$ is
labelled, every graph in $\G(V)$ is completely determined by the edges it
contains. Thus, a function $f : \G(V) \to \R$ does not depend on a choice
of edge-labelling as this is not needed to uniquely specify a graph in
$\G(V)$. Our next result discusses the effect of $S_{K_V}$ on the
differential structure of graph space.

\begin{theorem}\label{Tdifferent}
Suppose $\sigma \in S_{K_V}$ (recall that $\psi : K_V \to \N$ is fixed).
Now given a function $f : \G(V) \to \R$, define the $\sigma$-twisted
derivative of $f$ at a point $G \neq {\bf 0}, K_V$ via:
\[ f'_\sigma(G) := \lim_{G_1 \to G,\ G_1 \in \G'(V)} \frac{f(G_1) -
f(G)}{\| G_1 \|_{\sigma \circ \psi,2} - \| G \|_{\sigma \circ \psi,2}}.
\]

\noindent Then the automorphisms in $S_{K_V}$ preserving the differential
structure of graph space are precisely $S_\infty$. More precisely, if
$\sigma \in S_\infty$ and $f'(G)$ exists, then $f'_\sigma(G)$ exists and
equals $f'(G)$. If on the other hand $\sigma \in S_{K_V} \setminus
S_\infty$, $G \in \G'(V)$, and $f'(G) \neq 0$, then $f'_\sigma(G)$ does
not exist.
\end{theorem}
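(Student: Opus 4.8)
Write $\chi := \sigma \circ \psi$ for the twisted edge-labelling (a bijection $K_V \to \N$, so that $\|\cdot\|_{\chi,2} = \|\cdot\|_{\sigma\circ\psi,2}$), put $\delta(e) := \chi(e) - \psi(e) \in \Z$, and note that $g := \chi \circ \psi^{-1}$ is a permutation of $\N$, that $\chi$ and $\psi$ differ exactly on the support of $\sigma$, and hence that $\sigma \in S_\infty$ precisely when $\delta$ vanishes off a finite set of edges. The computation underlying everything is that for a \emph{single} edge $e$ and any graph $G$,
\[
\frac{\heart{G \Delta \{e\}}{2} - \heart{G}{2}}{\|G \Delta \{e\}\|_{\chi,2} - \|G\|_{\chi,2}} \;=\; 2^{\delta(e)},
\]
since numerator and denominator are $\pm 2^{-\psi(e)}$ and $\pm 2^{-\chi(e)}$ with the same sign. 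More generally, for $G_1$ close to $G$ the symmetric difference $G_1 \Delta G$ misses any prescribed finite edge set. This already disposes of the first assertion: if $\sigma \in S_\infty$, then $\chi = \psi$ off the finite support of $\sigma$, so once $G_1$ is near $G$ we have $\|G_1\|_{\chi,2} - \|G\|_{\chi,2} = \heart{G_1}{2} - \heart{G}{2}$; thus the difference quotients defining $f'_\sigma(G)$ and $f'(G)$ agree in a neighbourhood of $G$, and $f'_\sigma(G) = f'(G)$ whenever $f'(G)$ exists.

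For the converse, fix $\sigma \in S_{K_V} \setminus S_\infty$, a graph $G \in \G'(V)$, and assume $f'(G) \neq 0$. For any sequence of edges $(e_n)$ with $\psi(e_n) \to \infty$, the graphs $G_n := G \Delta \{e_n\}$ lie in $\G'(V)$ (flipping a single edge preserves ``neither finite nor cofinite'') and tend to $G$, and by the displayed identity together with $f'(G) \neq 0$,
\[
\frac{f(G_n) - f(G)}{\|G_n\|_{\chi,2} - \|G\|_{\chi,2}} \;=\; \frac{f(G_n) - f(G)}{\heart{G_n}{2} - \heart{G}{2}} \cdot 2^{\delta(e_n)} \;\longrightarrow\; f'(G)\cdot \lim_n 2^{\delta(e_n)}
\]
whenever the limit on the right exists in $[0,\infty]$. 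Hence it suffices to exhibit two sequences of edges with $\psi$-labels $\to \infty$ along which $\delta$ converges to \emph{distinct} values $c_1 \neq c_2$ of $\Z \cup \{\pm\infty\}$: the $\sigma$-twisted difference quotient then has the two subsequential limits $f'(G)2^{c_1} \neq f'(G)2^{c_2}$ (with $2^{\pm\infty} \in \{0,\infty\}$; if a $c_i$ is $\pm\infty$ the corresponding quotient is simply unbounded), so $f'_\sigma(G)$ cannot exist.

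The heart of the matter is therefore a combinatorial statement about the permutation $g$ of $\N$: because $\sigma \notin S_\infty$, the displacement $\delta$ must accumulate at (at least) two points of $\Z \cup \{\pm\infty\}$ along $\psi$. The key lemma is that \emph{no permutation $g$ of $\N$ satisfies $g(m) > m$ for all large $m$} --- for if $g(m) > m$ whenever $m > N$, then $g^{-1}(\{1,\dots,N+1\})$ would be an $(N+1)$-element subset of $\{1,\dots,N\}$ --- and a symmetric argument (or applying this to $g^{-1}$) shows none has $g(m) < m$ for all large $m$. Translating back, both $\{e : \delta(e) \le 0\}$ and $\{e : \delta(e) \ge 0\}$ are infinite. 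A short case split now finishes: if $\{e : \delta(e) = 0\}$ is infinite, take $c_1 = 0$ and extract $c_2 \neq 0$ from the infinitely many edges with $\delta(e) \neq 0$ (pass to a subsequence, with $\psi$-labels increasing, on which $\delta$ is constant, or else tends to $\pm\infty$); if $\{e : \delta(e) = 0\}$ is finite, then $\{e : \delta(e) > 0\}$ and $\{e : \delta(e) < 0\}$ are both infinite, yielding $c_1 \in \{1,2,\dots\} \cup \{+\infty\}$ and $c_2 \in \{-1,-2,\dots\} \cup \{-\infty\}$. In every case one obtains the required pair of sequences, and the theorem follows. The main obstacle is precisely isolating and proving this combinatorial lemma and organizing the case analysis; the topological and analytic ingredients are routine and parallel earlier results of this section (notably the non-translation-invariance proposition).
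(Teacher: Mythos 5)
Your proposal is correct and takes essentially the same route as the paper's proof: both parts rest on the single-edge computation $\bigl(\heart{G\Delta\{e\}}{2}-\heart{G}{2}\bigr)\big/\bigl(\| G\Delta\{e\}\|_{\sigma\circ\psi,2}-\| G\|_{\sigma\circ\psi,2}\bigr)=2^{\sigma(\psi(e))-\psi(e)}$ together with the counting fact that no permutation of $\N$ can eventually satisfy $\sigma(n)>n$ (or $\sigma(n)<n$) for all large $n$. The only difference is organizational --- the paper assumes $f'_\sigma(G)$ exists and derives a contradiction from the forced existence of $\lim_n 2^{\sigma(n)-n}$, whereas you argue in the contrapositive by exhibiting two single-edge sequences with distinct subsequential limits (your parenthetical that $c_i=-\infty$ makes the quotient ``unbounded'' should read that it tends to $0$, but this does not affect the conclusion).
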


\noindent In particular, a stronger statement holds than the topological
one discussed above - namely, not merely the critical points, but the
derivative itself remains invariant under the family $S_\infty$ of
eventually constant permutations of edge-labellings (but not under other
permutations).

\begin{proof}
In this proof we will freely identify $S_{K_V}$ with $S_{\N}$ via the
bijection $\psi$. Suppose $\sigma \in S_{K_V} = S_{\N}$ fixes all $n>N$
for some $N > 0$. One can then show that
\begin{equation}\label{Edifferent}
\heart{G - G_1}{2} < 2^{-N} \quad \Longleftrightarrow \quad \| G - G_1
\|_{\sigma \circ \psi,2} < 2^{-N}.
\end{equation}

\noindent Also note that in this case the smallest label of an edge in $G
- G_1 = G \Delta G_1$ under either $\psi$ or $\sigma \circ \psi$ is at
least $N+1$. Therefore,
\[ 0 \neq \heart{G_1}{2} - \heart{G}{2} = \| G_1 \|_{\sigma \circ \psi,2}
- \| G \|_{\sigma \circ \psi,2}, \]

\noindent if $G_1$ is ``close enough'' (in the $\varphi_2$-metric induced
by either $\psi$ or $\sigma \circ \psi$) to $G$. It follows that $f'(G) =
f'_\sigma(G)$ if either derivative exists.

Now suppose $f'(G)$ exists and is nonzero for some $G \in \G'(V)$ and $f
: \G(V) \to \R$, and $f'_\sigma(G)$ exists for some $\sigma \in S_{K_V}$.
Since $f'_\sigma(G)$ exists, then so does the limit
\[ \frac{f'_\sigma(G)}{f'(G)} = \lim_{n \to \infty} \frac{\heart{G \Delta
\{ \psi^{-1}(n) \}}{2} - \heart{G}{2}}{\| G \Delta \{ \psi^{-1}(n) \}
\|_{\sigma \circ \psi,2} - \| G \|_{\sigma \circ \psi,2}} = \lim_{n \to
\infty} \frac{2^{-n}}{2^{-\sigma(n)}} = \lim_{n \to \infty} 2^{\sigma(n)
- n}. \]

\noindent Note that the sequence $\sigma(n) - n$ is integer-valued. Thus
if the above limit is at most $1/2$, then there exists $N > 0$ such that
$\sigma(n) - n \geq 1$ for all $n > N$. But then the bijection $\sigma :
\N \to \N$ maps a subset of $\{ 1, \dots, N \}$ onto $\{ 1, \dots, N+1
\}$, which is impossible.

Therefore the above limit is equal to $2^{n_0}$ for some $n_0 \leq 0$.
Hence there exists $N > 0$ such that $\sigma(n) - n = n_0$ for all $n>N$.
Now if $n_0 < 0$, then $\sigma$ maps $\{ 1, \dots N+n_0 \}$ onto a subset
of $\{ 1, \dots, N \}$, which is impossible. The only remaining case is
that $n_0 = 0$, i.e., $\sigma \in S_\infty$.
\end{proof}

Given Theorem \ref{Tdifferent}, it is natural to ask when an automorphism
$\sigma \in S_{K_V}$ preserves (the derivative at) critical points of
$f$. The following result provides partial information along these lines.

\begin{prop}
Suppose $f : \G(V) \to \R$ is such that $f'(G) = 0$. Suppose there exists
a sequence $G_n \to G$ in $\G'(V)$ such that $f(G_n) \neq f(G)$ and $G_n
\setminus G$ is finite for all $n$. Then there exists $\sigma \in
S_{K_V}$ such that $f'_\sigma(G) \neq 0$.
\end{prop}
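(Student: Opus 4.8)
The plan is to exploit the freedom in choosing $\sigma$ so that the sequence $G_n$ — which already witnesses a nonzero ``difference quotient numerator" $f(G_n) - f(G)$ — is mapped to a sequence whose $\|\cdot\|_{\sigma\circ\psi,2}$-denominators decay much faster than the original $\heart{\cdot}{2}$-denominators, forcing the twisted difference quotient to blow up. Concretely, passing to a subsequence, I may assume the finite sets $D_n := G_n \Delta G = (G_n \setminus G) \coprod (G \setminus G_n)$ are nonempty (since $f(G_n) \neq f(G)$ and $f'(G) = 0$ forces $\heart{G_n}{2} - \heart{G}{2} \to 0$, so the $D_n$ involve edges of arbitrarily large $\psi$-label; in particular we may further thin out so that the $D_n$ are pairwise disjoint and $\min_{e \in D_n}\psi(e) \to \infty$). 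Write $M_n := \max_{e \in D_n} \psi(e)$. Since $f'(G) = 0$ we have
\[
\frac{f(G_n) - f(G)}{\heart{G_n}{2} - \heart{G}{2}} \longrightarrow 0,
\]
and since $|\heart{G_n}{2} - \heart{G}{2}| \leq \heart{\psi^{-1}(\{1,\dots,M_n\})}{2} \leq 2^{-\,\min_{e\in D_n}\psi(e)+1}$, we may choose (again thinning) a rapidly growing sequence $N_1 < N_2 < \cdots$ with $N_k \geq M_k$ and with $|f(G_k) - f(G)| \geq \alpha_k \cdot 2^{-N_{k}}$ for some positive $\alpha_k$ that I can arrange to tend to $\infty$ — here I will need to be slightly careful, using only that $f(G_k) \neq f(G)$ together with the rate information above; the relevant point is that $|f(G_k)-f(G)|/2^{-N_k}$ can be made unbounded by choosing $N_k$ large relative to $-\log_2|f(G_k)-f(G)|$.

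Next I build $\sigma \in S_{K_V} \cong S_{\N}$. The idea is to reserve, for each $k$, a ``high-address block" of integers and have $\sigma$ send the labels occurring in $D_k$ (all of which are $\leq M_k \leq N_k$) into that block, so that under $\sigma\circ\psi$ every edge of $D_k$ has an enormous label, hence $\|G_k\|_{\sigma\circ\psi,2} - \|G\|_{\sigma\circ\psi,2}$ has absolute value at most $2^{-L_k + 1}$ where $L_k := \min_{e \in D_k}(\sigma\circ\psi)(e)$ can be taken astronomically large compared to $N_k$. Since the $D_k$ are finite and pairwise disjoint, such a $\sigma$ exists: partition $\N$ into the ``source" labels $\bigcup_k \psi(D_k)$, the remaining source labels, and a disjoint family of target blocks, and define $\sigma$ to be any bijection carrying $\psi(D_k)$ into the $k$-th block (chosen far out) and acting as a bijection on the complement. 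With this choice,
\[
\left| \frac{f(G_k) - f(G)}{\|G_k\|_{\sigma\circ\psi,2} - \|G\|_{\sigma\circ\psi,2}} \right|
\;\geq\; \frac{\alpha_k \, 2^{-N_k}}{2^{-L_k + 1}} \;=\; \alpha_k \, 2^{\,L_k - N_k - 1} \longrightarrow \infty,
\]
provided $L_k$ was chosen with $L_k > N_k$; meanwhile $G_k \to G$ in $\G'(V)$ by hypothesis. Hence $f'_\sigma(G)$, if it existed, would have to equal a limit of difference quotients along $G_k \to G$ that diverges, so $f'_\sigma(G)$ does not exist; in particular $f'_\sigma(G) \not\to 0$, and since $\sigma \notin S_\infty$ this is entirely consistent with Theorem \ref{Tdifferent}.

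The step I expect to be the main obstacle is the bookkeeping in choosing $N_k$ (and hence the positive scalars $\alpha_k$) so that $|f(G_k) - f(G)| / 2^{-N_k}$ is genuinely forced to be unbounded \emph{while simultaneously} $N_k \geq M_k$ and the target blocks can be placed with $L_k > N_k$. This is delicate only because $f$ is an arbitrary function: we have no lower bound on $|f(G_k) - f(G)|$ beyond its being positive, so we must let $N_k$ grow as fast as $-\log_2|f(G_k)-f(G)|$ demands, and then push $L_k$ even further out. Since $\sigma$ is built after all the $G_k$ and $N_k$ are fixed, and since only finitely many source labels are constrained at each stage, there is no circularity: we simply define $\sigma$ block-by-block with each target block chosen large enough. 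Everything else — disjointifying the $D_k$, the estimate $|\heart{H}{2} - \heart{G}{2}| \leq 2^{-\min(\cdot)+1}$ for $H \Delta G$ supported on labels $\geq \min(\cdot)$, and the fact that $G_k \in \G'(V)$ with $G_k \to G$ is preserved under passing to subsequences — is routine.
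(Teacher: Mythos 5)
Your overall strategy is the one the paper uses: relabel the edges on which $G_n$ and $G$ differ to addresses so deep (of order $-\log_2|f(G_n)-f(G)|$ plus a divergent margin) that the twisted denominator $\|G_n\|_{\sigma\circ\psi,2}-\|G\|_{\sigma\circ\psi,2}$ becomes smaller than the numerator, so the difference quotients along $G_n\to G$ cannot tend to $0$. The bookkeeping you flag as the main worry --- choosing $N_k$ after seeing $|f(G_k)-f(G)|$ and then pushing the targets past $N_k$ --- is indeed unproblematic and matches the paper's choice of $t_k \geq -\log_2|f(G_{n_k})-f(G)|$.

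The genuine gap is earlier, in the reduction. You declare the sets $D_n := G_n\Delta G$ to be finite and then thin so that they are pairwise disjoint, but the hypothesis only makes $G_n\setminus G$ finite; the sets $G\setminus G_n$ may all be infinite, and then no amount of thinning makes the $D_n$ pairwise disjoint. For instance, take $G=\psi^{-1}(2\N)$ and $G_n = G\setminus\psi^{-1}\bigl(4\N\cap[4n,\infty)\bigr)$: then $G_n\to G$ in the product topology and $G_n\setminus G=\emptyset$, yet the sets $D_n=G\setminus G_n$ are nested infinite sets, so any two of them meet in an infinite set, and $M_k=\max\psi(D_k)$ is not even defined. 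Your construction of $\sigma$ --- carrying each $\psi(D_k)$ into its own far-out block --- therefore cannot be carried out as stated, because the source sets overlap. The repair is essentially the paper's construction: build $\sigma$ as a single permutation that shifts the whole interval of labels $(m_k,m_{k+1})$ forward by an increasing amount $t_k-m_k$ (so that every label exceeding $m_k$ lands beyond $t_k$), with the finite sets $\psi(G_{n_k}\setminus G)$ arranged to lie in these intervals, and thin the subsequence further so that $G\setminus G_{n_k}$ contains no edge of label $\leq m_k$ (possible since $G_n\to G$ coordinatewise). Then every edge of $G_{n_k}\Delta G$ --- including the possibly infinite part $G\setminus G_{n_k}$ --- has $\sigma\circ\psi$-label exceeding $t_k$, which gives $\bigl|\|G_{n_k}\|_{\sigma\circ\psi,2}-\|G\|_{\sigma\circ\psi,2}\bigr|\leq 2^{-t_k}<|f(G_{n_k})-f(G)|$ and hence the desired conclusion. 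With that modification your argument goes through and coincides with the paper's.
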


\begin{proof}
We first define subsequences $n_k, m_k, m'_k$ of $\N$ as follows: set
$E_n := G_n \setminus G$, $m_1 := 0$, and $n_1 := 1$. Now given $n_k$, define
\[ m'_k := \max \psi(E_{n_k}), \qquad m_{k+1} := \min ([m'_k + 1,\infty)
\cap \N) \setminus \psi(G). \]

\noindent Next, define $n_{k+1}$ to be the least $n$ such that $\min
\psi(E_n) > m_{k+1}$. Now we define $t_0 := 0$ and an auxiliary sequence
$0 < t_1 < t_2 < \cdots \in \N$ as follows:
\[ t_k := 1 + \max( t_{k-1} + m_k - m_{k-1}, \frac{-\ln |f(G_{n_k}) -
f(G)|}{\ln 2}). \]

Now define the permutation $\sigma_{\N} : \N \to \N$ as follows:
$\sigma_{\N}$ sends $n \in (m_k, m_{k+1})$ to $n + t_k - m_k$ for all $k
\in \N$, and is an order-preserving bijection from $\{ m_2, m_3, \dots
\}$ onto $\N \setminus \sqcup_{k \in \N} (t_k, t_k + m_{k+1} - m_k)$.
Finally, define $\sigma : K_V \to K_V$ via: $\sigma := \psi^{-1} \circ
\sigma_{\N} \circ \psi$.
We then claim that $f'_\sigma(G) \neq 0$. More precisely, note that for
all $k$,
\[ |\| G_{n_k} \|_{\sigma \circ \psi,2} - \| G \|_{\sigma \circ \psi,2}|
\leq 2^{-(t_k + 1)} + 2^{-(t_k + 2)} + \dots = 2^{-t_k} < |f(G_{n_k}) -
f(G)|. \]

\noindent It follows that
\[ \lim_{k \to \infty} \frac{f(G_{n_k}) - f(G)}{\| G_{n_k} \|_{\sigma
\circ \psi,2} - \| G \|_{\sigma \circ \psi,2}} \neq 0, \]

\noindent which concludes the proof.
\end{proof}
%}}}

%{{{1 Section 4 - The limiting edge density of a countable graph
\section{The limiting edge density of a countable graph}\label{Sedge}

Thus far we have explored the space of all labelled graphs in terms of
its topology and differential structure.
We now proceed to draw further parallels to the unlabelled setting which
are not directly related to the theory of differentiation developed
above. Recall that the notion of homomorphism density has a labelled
analogue via homomorphism indicators, and these were explored in Section
\ref{Shomind}. In the final section of this paper, we introduce and
briefly examine an alternate approach to studying homomorphism densities
in the labelled setting: namely, in a limiting sense.

\begin{definition}
Henceforth fix a countable vertex set $V$, as well as a bijection $\xi :
V \to \N$ which labels the vertices. Given a graph $G \in \G(V)$ and $n
\in \N$, define $G_V[n]$ to be the finite subgraph of $G$ induced on the
vertices $\xi^{-1}(1), \dots, \xi^{-1}(n)$.

Given a finite simple graph $H$ and a finite simple labelled graph $G$,
denote by $\ind(H,G)$ the number of embeddings of $H$ into $G$ as an
induced subgraph. Now define the corresponding {\it induced homomorphism
density} to equal $t_{\ind}(H,G) := \ind(H,G) \cdot (|V(G)|-|V(H)|)! /
|V(G)|!$.

Next, if $G \in \G(V)$, then define the {\it limiting induced
homomorphism density of $H$ in $G$} to equal
\[
t_{\ind}(H,G) := \lim_{n \to \infty} t_{\ind}(H, G_V[n]),
\]

\noindent if this limit exists. Also define $S(H,G)$ to be the set of
accumulation points of the sequence $t_{\ind}(H,G_V[n])$.
\end{definition}

We define and study the sets $S(H,G)$ because it is not always the case
that limiting induced homomorphism densities converge. Thus we initiate
the study of (the sets of) limiting homomorphism densities in this
section. We will focus on the {\it limiting edge density} of a countable
graph. Given a finite graph $G$ on $n$ vertices, the edge density is
precisely the number of edges in it, divided by the total number of
possible edges -- namely, $e(G) := |E(G)| / \binom{n}{2}$. It is now
natural to ask questions on the limiting edge density. For instance, if
the number of vertices is a fixed finite number $n$, what is the
probability mass function satisfied by the edge density on all labelled
graphs on $n$ vertices? Is there a limit of these probability
distributions as more and more nodes are added to the graph -- i.e., as $n
\to \infty$?
To answer the first of these questions, note that there are
$2^{\binom{n}{2}}$ possible labelled graphs on $n$ vertices, and the
number of edges in a graph follows a binomial distribution. Thus if $e_n$
denotes the edge density of a random graph on $n$ vertices, then
\[ \bp(e_n = \binom{n}{2}^{-1} m) = \binom{\binom{n}{2}}{m}
2^{-\binom{n}{2}}. \]

\noindent In the limit as $n \to \infty$, these edge densities converge
almost everywhere by the Strong Law of Large Numbers to $1/2$, since
$e_n$ is the average of $\binom{n}{2}$ i.i.d. Bernoulli($\frac{1}{2}$)
random variables.

Thus, we take a closer look at edge densities. The main result in this
section studies the set of possible edge densities that can arise for
countable labelled graphs.

\begin{theorem}\label{Thomdensity}
Suppose $H$ is a finite simple graph and $G \in \G(V)$. Then $S(H,G)$ is
a closed and nonempty subset of $[0,1]$. If $H = K_2$ is the complete
graph on $2$ vertices, then $S(K_2,G)$ is a closed subinterval of
$[0,1]$. Conversely, given any nonempty closed subinterval $[a,b] \subset
[0,1]$, there exists $G \in \G(V)$ such that $S(K_2,G) = [a,b]$.
\end{theorem}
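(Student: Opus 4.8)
The first two assertions are the routine ones. Every term $t_{\ind}(H,G_V[n])$ lies in the compact set $[0,1]$, so the sequence has at least one accumulation point and $S(H,G)\neq\emptyset$; and the set of accumulation points of any sequence is automatically closed. When $H=K_2$ one checks that $t_{\ind}(K_2,G_V[n])$ equals the edge density $e(G_V[n])=:\rho_n$ of the subgraph induced on the first $n$ vertices. The plan here is to exploit the slow variation of this sequence: writing $d_{n+1}\in\{0,1,\dots,n\}$ for the number of neighbours of $\xi^{-1}(n+1)$ among $\xi^{-1}(1),\dots,\xi^{-1}(n)$, a one-line computation gives
\[ \rho_{n+1}-\rho_n=\frac{2}{n(n+1)}\bigl(d_{n+1}-n\rho_n\bigr),\qquad\text{hence}\qquad |\rho_{n+1}-\rho_n|\le\frac{2}{n+1}\longrightarrow 0. \]
A bounded real sequence whose consecutive differences tend to $0$ has as its set of accumulation points exactly the closed interval $[\liminf\rho_n,\limsup\rho_n]$ — the only nontrivial point being that no value strictly between two accumulation points can be ``jumped over'' once the step sizes fall below the gap. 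Combined with the first assertion this yields $S(K_2,G)=[\liminf_n\rho_n,\limsup_n\rho_n]$, a closed subinterval of $[0,1]$.

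For the converse, given $[a,b]\subseteq[0,1]$ I would build a graph $G\in\G(V)$ one vertex at a time along $\xi^{-1}(1),\xi^{-1}(2),\dots$, choosing $d_{n+1}$ at each step so that $\liminf_n\rho_n=a$ and $\limsup_n\rho_n=b$; by the first paragraph this forces $S(K_2,G)=[a,b]$. If $a=b=:c$, simply join $\xi^{-1}(n+1)$ to the first $\lfloor cn\rfloor$ of its predecessors, so that $|E(G_V[n])|=c\binom{n}{2}+O(n)$ and $\rho_n\to c$. Assume now $a<b$. Two elementary moves are available at each step: a \emph{saturating} step ($d_{n+1}=n$) multiplies $1-\rho_n$ by $(n-1)/(n+1)$, so under repeated saturation $\rho_n\uparrow 1$; an \emph{isolating} step ($d_{n+1}=0$) multiplies $\rho_n$ by $(n-1)/(n+1)$, so under repeated isolation $\rho_n\downarrow 0$. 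I would build $G$ in alternating blocks: run the $k$-th saturating block until the first index with $\rho_n\ge b-2^{-k}$, then the $k$-th isolating block until the first index with $\rho_n\le a+2^{-k}$, and repeat. Each block terminates by the two monotone limits just noted (including at the endpoints $b=1$ and $a=0$).

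The remaining work — and the one genuine subtlety — is to verify $\liminf_n\rho_n=a$ and $\limsup_n\rho_n=b$, which amounts to controlling the overshoot at the end of each block. If the $k$-th saturating block stops at vertex-count $n+1$, then $\rho_n<b-2^{-k}$ and the last step has size at most $2/(n+1)$, so $\rho_{n+1}<b-2^{-k}+2/(n+1)$; since each earlier block contributes at least one vertex once $2^{-k}$ is small, the vertex count grows with $k$, so both $2^{-k}$ and $2/(n+1)$ tend to $0$, giving $\limsup_n\rho_n=b$. The symmetric estimate (per-step decrease at most $2/(n+1)$) gives $\liminf_n\rho_n=a$. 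Finally, for $c\in(a,b)$ and $\epsilon>0$, in each sufficiently large $k$-th saturating block $\rho_n$ climbs from below $a+2^{-k}$ past $b-2^{-k}$ in steps eventually smaller than $\epsilon$, hence lands within $\epsilon$ of $c$; so $c$ too is an accumulation point, and $S(K_2,G)=[\liminf_n\rho_n,\limsup_n\rho_n]=[a,b]$ by the first paragraph. I expect the dovetailing of the two decay estimates — the shrinking target gap $2^{-k}$ against the shrinking overshoot bound $2/(n+1)$, i.e.\ making sure the vertex count genuinely grows with the block index — to be the only step requiring real care; everything else is bookkeeping.
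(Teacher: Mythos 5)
Your proof is correct and follows essentially the same route as the paper's: nonemptiness and closedness come from compactness of $[0,1]$, the interval property for $S(K_2,G)$ from the step bound $|\rho_{n+1}-\rho_n|\le 2/(n+1)$ (the paper's Equation \eqref{Eedgedensity-growth}), and the converse from an oscillating construction whose block-end overshoots are controlled by that same bound. The only differences are cosmetic: you add one vertex at a time via the exact recursion $\rho_{n+1}-\rho_n=\tfrac{2}{n(n+1)}(d_{n+1}-n\rho_n)$ and invoke the general fact that a bounded sequence with vanishing successive differences has accumulation set $[\liminf\rho_n,\limsup\rho_n]$, whereas the paper attaches batches of vertices and edges in a separately stated lemma (Theorem \ref{Tedgelimits}) and extracts the intermediate accumulation points by hand.
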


The technical heart of the proof of Theorem \ref{Thomdensity} lies in the
following result.

\begin{theorem}\label{Tedgelimits}
Fix an enumeration $\xi : V \to \N$ of the vertices in $V$.
\begin{enumerate}
\item For every countable graph $G \in \G(V)$, there exists a subsequence
$n_1 < n_2 <  \cdots$ in $\N$, such that $e(G_V[n_k])$ converges as $n
\to \infty$ (to a limit in $[0,1]$).

\item The previous statement is ``sharp'' in the sense that for any finite
set of points $0 \leq e_1 < \cdots < e_m \leq 1$ with $m>1$, there exists
a countable graph $G \in \G(V)$ and subsequences $n_{k1} < n_{k2} <
\cdots < n_{km} \in \N$, such that
\[
\lim_{k \to \infty} e(G_V[n_{ki}]) = e_i, \qquad
\forall 1 \leq i \leq m,
\]

\noindent and all limiting edge densities of $G$ lie in $[e_1, e_m]$.

\item Given $e \in [0,1]$, there exists $G \in \G(V)$ such that
$e(G_V[n]) \to e$.
\end{enumerate}
\end{theorem}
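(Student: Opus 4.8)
The plan is to treat the three assertions separately, as they are of different depths. Assertion (1) is immediate: $e(G_V[n])\in[0,1]$ for every $n$, so the Bolzano--Weierstrass theorem hands us a convergent subsequence, and nothing about graphs is used. Assertions (2) and (3) are both constructions; (3) is a routine greedy argument, while (2) is where the actual work lies.

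For assertion (3) I would build $G$ one vertex at a time, monitoring $m_n:=|E(G_V[n])|$. Passing from $G_V[n]$ to $G_V[n+1]$ amounts to deciding how many of the $n$ potential edges joining the new vertex $\xi^{-1}(n+1)$ to the earlier ones to keep --- call this number $a_{n+1}\in\{0,1,\dots,n\}$ --- so that $m_{n+1}=m_n+a_{n+1}$ while $\binom{n+1}{2}=\binom{n}{2}+n$. Given the target $e$, I would choose $a_{n+1}$ so that $m_{n+1}$ is the element of $\{m_n,m_n+1,\dots,m_n+n\}$ closest to $e\binom{n+1}{2}$. Since $e\binom{n+1}{2}-m_n=en+(e\binom{n}{2}-m_n)$, an induction shows that for $0<e<1$ this target lies in the feasible range $[0,n]$ once $n$ is large (and it is met exactly when $e\in\{0,1\}$), so $|m_n-e\binom{n}{2}|$ stays bounded; dividing by $\binom{n}{2}\to\infty$ gives $e(G_V[n])\to e$, which is assertion (3).

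For assertion (2) I would again build $G$ one vertex at a time, but now at the step that introduces $\xi^{-1}(n+1)$ I either keep all $n$ new potential edges or keep none of them. The two ingredients are: (a) the elementary bound
\[ \bigl|e(G_V[n+1])-e(G_V[n])\bigr| \;=\; \frac{|a_{n+1}\binom{n}{2}-m_n\,n|}{\binom{n}{2}\binom{n+1}{2}} \;\le\; \frac{n}{\binom{n}{2}} \;=\; \frac{2}{n-1}, \]
so densities move in $O(1/n)$-sized steps; and (b) two monotone ``drives'' toward a fixed target $\tau\in[0,1]$: while $e(G_V[n])<\tau$ one keeps all $n$ new edges, whereupon the deficit $\tau\binom{n}{2}-m_n$ drops by $(1-\tau)n$ at stage $n$ and hence becomes $\le 0$ in finitely many stages (when $\tau=1$ the density instead rises monotonically to $1$); while $e(G_V[n])>\tau$ one keeps no new edges, so $m_n$ is frozen while $\binom{n}{2}$ grows and the density falls monotonically to $0$. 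Now run super-rounds: in super-round $2j-1$ keep no new edges until the density first drops below $e_1+1/j$, and in super-round $2j$ keep all new edges until the density first exceeds $e_m-1/j$. By (b) each super-round ends after finitely many stages, so infinitely many of them occur and the vertex-count tends to $\infty$; by (a) the density overshoots each stopping threshold by at most $2/(n-1)$, so $e(G_V[n])\in[e_1-o(1),\,e_m+o(1)]$ throughout and hence every accumulation point of $\bigl(e(G_V[n])\bigr)_n$ lies in $[e_1,e_m]$. Finally, in the upward super-round $2j$ the density climbs from below $e_1+1/j$ to above $e_m-1/j$ in $O(1/n)$-sized steps, so for each fixed $i$ it attains some value within $O(1/n)$ of $e_i$ at a stage in that super-round; letting $j\to\infty$ extracts, for each $i$, a subsequence $n_{i1}<n_{i2}<\cdots$ along which $e(G_V[n_{ik}])\to e_i$. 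The increasing family $\{G_V[n]\}$ assembles into a single $G=\bigcup_n G_V[n]\in\G(V)$ whose induced subgraph on the first $n$ vertices is precisely the $G_V[n]$ we built, since at every stage only edges incident to the freshly added vertex were decided.

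The one genuine difficulty is the bookkeeping in assertion (2): one has to check simultaneously that every super-round terminates (so each target is revisited infinitely often), that the transient overshoot at each stopping threshold is $O(1/n)$ and therefore vanishes (so no accumulation point escapes $[e_1,e_m]$), and that the $O(1/n)$ step size really does force the density within $o(1)$ of each $e_i$ during an upward drive. All three reduce to the two displayed facts (a) and (b) above, so the argument is routine once organized; no deeper idea seems to be needed.
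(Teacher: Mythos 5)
Your proposal is correct. Part (1) is the same Bolzano--Weierstrass observation as in the paper, and part (3) is the same greedy idea (the paper maintains the invariant $|e(G_n)-e|<\binom{n}{2}^{-1}$ by choosing how many edges join the new vertex to the old ones; your ``closest feasible $m_{n+1}$'' bookkeeping is an equivalent way to run the induction). For part (2) your route differs in organization from the paper's, though both rest on the same two facts, namely the $O(1/n)$ bound on $|e(G_V[n+1])-e(G_V[n])|$ and the ability to drive the density monotonically toward $0$ or $1$. The paper isolates a lemma that, starting from density $p_0$, produces a supergraph of density within $\epsilon$ of $p_1$ with all intermediate induced subgraphs having density in $[p_0,p_1]$ (adding a computed batch of vertices and then tuning by single edges, each of which moves the density by less than $\epsilon$), and applies it successively to the consecutive pairs $(e_i,e_{i+1})$, cycling $e_1\to e_2\to\cdots\to e_m\to e_1$ with shrinking tolerances; each target $e_i$ is thus hit directly once per cycle. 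You instead oscillate only between the two extremes $e_1$ and $e_m$ via all-or-nothing super-rounds and recover the interior targets $e_2,\dots,e_{m-1}$ from the discrete intermediate value property of the $O(1/n)$-step density sequence. Your version is arguably cleaner (no batch-size computation, no separate edge-by-edge tuning phase, and the containment of all accumulation points in $[e_1,e_m]$ falls out of the stopping rules plus the overshoot bound), while the paper's lemma gives slightly more explicit control of where the density sits between consecutive targets; the small gaps you would still need to write out (boundedness of the deficit in (3) before $n$ is large, and the $\tau=1$ and $e_1=0$ edge cases in the drives) are easily closed.
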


\begin{proof}\hfill
\begin{enumerate}
\item For all $n$, $e(G_V[n]) \in [0,1]$. Hence there exists a convergent
subsequence $e(G_V[n_k])$ as desired.

\item We first claim that the following holds: given $0 \leq p_0 < p_1
\leq 1$, $0 < \epsilon <(p_1-p_0)/2$, $i \in \{ 0, 1 \}$, and a finite
graph $G$ with $e(G) = p_i$, there exists a finite graph $H \supset G$
such that $|e(H) - p_{1-i}| < \epsilon$, and such that for each
``intermediate'' graph $H'$ (i.e., $|V(G)| < |V(H')| < |V(H)|$), $e(H')
\in [p_0,p_1]$.

We show only the claim for $i=0$; the other case is proved similarly. If
$G$ has $n_1$ vertices, then define:
\[ n_2 := 1 + \max(\sqrt{2/\epsilon}, n_1 \sqrt{p_1/(p_0+\epsilon)}, n_1
\sqrt{(1-p_0)/(1-p_1+\epsilon)}). \]

\noindent Note that $n_2 > n_1$ as desired. We now construct a graph $H
\supset G$ on $n_2$ vertices as follows: first attach $n_2 - n_1$
vertices to $G$. If no extra edges are added, then the resulting graph
has edge density at most $e(G) = p_0 < p_1$. If all extra edges (i.e.,
all edges that are not between vertices in $V(G)$) are added, then the
resulting graph has edge density
\[ \binom{n_2}{2}^{-1} \left[ p_0 \binom{n_1}{2} + \binom{n_2}{2} -
\binom{n_1}{2} \right] = 1 - \frac{n_1(n_1-1)(1-p_0)}{n_2(n_2-1)} \geq
1 - \frac{n_1^2(1-p_0)}{(n_2-1)^2} \geq p_1-\epsilon. \]

\noindent Thus we attach a suitable number of edges among the
$\binom{n_2}{2} - \binom{n_1}{2}$ extra edges. At each stage, the edge
density is increasing, so it is at least $p_0$. Finally, observe that it
is possible to increase the edge density until it lies in $(p_1 -
\epsilon, p_1]$, since adding an edge among the $n_2$ vertices changes
the edge density by $\binom{n_2}{2}^{-1} \leq ((n_2-1)^2/2)^{-1} <
\epsilon$. But then the edge densities of all ``intermediate'' graphs lie
in $[p_0,p_1]$, and the claim is proved.\medskip

We now show the result given the above claim. To do so, define
$\epsilon_0 := \frac{1}{2} \min_{0 < i <  m}(e_{i+1}-e_i)$, and begin
with a graph $G_{11}$ with edge density in the interval $[e_1,e_1 +
\epsilon_0) \subset (e_1 - \min(1,\epsilon_0), e_1 +
\min(1,\epsilon_0))$. This is easy to achieve by choosing integers $0 < r
< s$ such that $|e_1 - \binom{s}{2}^{-1} r| < \min(1,\epsilon_0)$. Set
$n_{11} := |V(G_{11})|$.

Now proceed inductively as follows: suppose we have constructed the
$G_{ik}$ on $n_{ik}$ vertices with edge density in $(e_i - \min(1/k,
\epsilon_0), e_i + \min(1/k, \epsilon_0)) \cap [e_1,e_m]$, for some $1
\leq i \leq m$ and $k \in \N$. If $i<m$, then apply the claim to
construct a graph $G_{i+1,k} \supset G_{ik}$, with increased edge
density: $e(G_{i+1,k}) \in (e_{i+1} - \min(1/k,\epsilon_0), e_{i+1}]$.
Now define $n_{i+1,k} := |V(G_{i+1,k})|$. Similarly, if $i=m$, then apply
the claim to construct a graph $G_{1,k+1} \supset G_{m,k}$, with reduced
edge density: $e(G_{1,k+1} \in [e_1, e_1+ \min(1/(k+1),\epsilon_0))$. Now
define $n_{1,k+1} := |V(G_{1,k+1})|$.
This construction clearly proves the result, once we observe that by the
above claim in the proof, the edge densities always stay inside
$[e_1,e_m]$.

\comment{
We also remark that if we further desire the interlacing of the
subsequences as follows:
\[ n_{1k} < n_{2k} < \cdots < n_{mk} < n_{1,k+1} \qquad \forall k \in \N,
\]

\noindent and the $e_i$ are not necessarily in increasing order, then it
is again possible to construct a desired sequence of increasing graphs in
an indirect manner. First choose $\sigma \in S_m$ such that
$e_{\sigma(1)} < e_{\sigma(2)} < \cdots < e_{\sigma(m)}$. Now define
$n'_{ik}$ as in the above construction, for the densities
$e_{\sigma(i)}$. Finally, set
\[ n_{ik} := n'_{\sigma^{-1}(i), mk+i}, \qquad \forall 1 \leq i \leq m, \
k \in \N. \]

\noindent This concludes the proof, for we get:
\[ \lim_{k \to \infty} e(G_V[n_{ik}]) = e(G_V[n'_{\sigma^{-1}(i),mk+i}])
= e_{\sigma(\sigma^{-1}(i))} = e_i. \]
}

\item We first record a fact which is useful later in this paper. Namely,
suppose $G$ is a graph with $n$ vertices and edge density $e$. Then
adding one more vertex and no additional edges yields a graph with edge
density
\[ \binom{n+1}{2}^{-1} \binom{n}{2} e = \frac{n-1}{n+1} e, \]

\noindent while adding all additional edges yields a graph with edge
density
\[ \binom{n+1}{2}^{-1} \left( \binom{n}{2} e + \binom{n+1}{2} -
\binom{n}{2} \right) = 1 - \frac{n-1}{n+1}(1-e) = \frac{n-1}{n+1}e +
\frac{2}{n+1}. \]

\noindent Thus for all graphs $G \in \G(V)$ and $n \in \N$,
\begin{equation}\label{Eedgedensity-growth}
\frac{n-1}{n+1} e(G_V[n]) \leq e(G_V[n+1]) \leq \frac{n-1}{n+1} e(G_V[n])
+ \frac{2}{n+1}.
\end{equation}

\noindent Now given $e \in [0,1]$, if $e = 0$ or $1$ then we choose $G =
\emptyset$ or $K_V$ to obtain the desired countable graph with edge
density $e$. Otherwise suppose $e \in (0,1)$ and set $G_2 := K_2$. Then
$|e(G_2) - e| < \binom{2}{2}^{-1} = 1$. We now inductively construct an
increasing sequence of graphs $G_n$ such that $|e - e(G_n)| <
\binom{n}{2}^{-1}$ for all $n \geq 2$; this shows that $\lim_{n \to
\infty} e(G_n) = e$ as desired. Given $G_n$ for $1 < n \in \N$ such that
$|e(G_n) - e| < \binom{n}{2}^{-1}$, add a node to $G_n$ together with a
certain number of additional edges (which are specified presently) not
between the nodes of $G_n$. By~\eqref{Eedgedensity-growth}, this yields a
graph with edge density between $a := \frac{n-1}{n+1} e(G_n)$ and $b :=
\frac{n-1}{n+1} e(G_n) + \frac{2}{n+1}$. But now compute that
\begin{align*}
a = &\ \frac{n-1}{n+1} e(G_n) < \frac{n-1}{n+1} (e + \binom{n}{2}^{-1}) =
\frac{n-1}{n+1} e + \binom{n+1}{2}^{-1} < e + \binom{n+1}{2}^{-1},\\
b > &\ \frac{n-1}{n+1} (e - \binom{n}{2}^{-1}) + \frac{2}{n+1} = \frac{2
+ (n-1)e}{n+1} - \binom{n+1}{2}^{-1} > e - \binom{n+1}{2}^{-1}.
\end{align*}

\noindent Now we consider various cases. If $a > e - \binom{n+1}{2}^{-1}$
or $b < e + \binom{n+1}{2}^{-1}$, then we add no or all edges connecting
vertex $n+1$ to $G_n$, respectively. This yields $G_{n+1}$ with edge
density in $(e-\binom{n+1}{2}^{-1}, e+\binom{n+1}{2}^{-1})$ as desired.

The remaining case is when $a \leq e - \binom{n+1}{2}^{-1} < e +
\binom{n+1}{2}^{-1} \leq b$. In this case it is possible to add an
integer multiple (say $k$) of $\binom{n+1}{2}^{-1}$ to $a$ to obtain a
number in $(e-\binom{n+1}{2}^{-1}, e+\binom{n+1}{2}^{-1})$ as desired.
Now construct $G_{n+1}$ by connecting the additional vertex $n+1$ to
precisely $k$ vertices in $G_n$. This concludes the proof by induction on
$n$, with $e(G_n) \to e$.
\end{enumerate}
\end{proof}

It is now possible to prove the main result of this section.

\begin{proof}[Proof of Theorem \ref{Thomdensity}]
First fix a finite simple graph $H$. Then $S(H,G)$ is nonempty because
for all $n \in \N$, $t_{\ind}(H,G_V[n]) \in [0,1]$ and hence there exists
a convergent subsequence $t_{\ind}(H,G_V[n_k])$ as desired. 
Next, suppose $e$ is an accumulation point of $S(H,G)$, and $e_1 < e_2 <
\cdots$ converges to $e \in [0,1]$, with $e_l \in S(H,G)$ for all $l \in
\N$.
Thus for all $l \in \N$, there exists an infinite subsequence
$n_{1l} < n_{2l} < \cdots$ in $\N$ such that $t_{\ind}(H,G_V[n_{kl}]) \to
e_l$ as $k \to \infty$. Now set $n'_0 := 0$, and given $n'_{l-1}$ for $l
\in \N$, choose $k_l$ such that
\[
n_{k_l l} > n'_{l-1}, \qquad |t_{\ind}(H,G_V[n_{k_l l}]) - e_l| <
\frac{1}{l}.
\]

\noindent Now define $n'_l := n_{l,k_l}$. Then the $n'_l$ form an
increasing subsequence in $\N$ such that
\[ |t_{\ind}(H,G_V[n'_l]) - e| \leq |t_{\ind}(H,G_V[n'_l]) - e_l| + (e -
e_l) < \frac{1}{l} + (e-e_l). \]

\noindent We conclude that $e \in S(H,G)$. A similar argument in the case
when $e_1 > e_2 > \cdots$ converges to $e \in [0,1]$ now concludes the
proof and shows that $S(H,G) \subset [0,1]$ is closed for all $H$.

We now show that $S(K_2,G)$ is a closed interval. Given $e \in (\inf
S(K_2,G), \sup S(K_2,G))$, fix any sequence $n_1 < n_2 < \cdots$ in $\N$
such that
\begin{align*}
&\ \lim_{k \to \infty} e(G_V[n_{2k-1}]) = \inf S(K_2,G) \neq \sup
S(K_2,G) = \lim_{k \to \infty} e(G_V[n_{2k}]),\\
&\ e(G_V[n_{2k-1}]) < e < e(G_V[n_{2k}])\ \forall k \in \N.
\end{align*}

\noindent By~\eqref{Eedgedensity-growth}, note that for all $n \in \N$,
we have that $e(G_V[n+1]) \leq e(G_V[n]) + 2/(n+1)$. Now given
$n_{2k-1}$, add one vertex at a time under the given enumeration, so that
the edge density increases with each additional vertex by at most
$2/(n+1) \leq 2 / (n_{2k-1} + 1)$. Now choose $n'_k \in [n_{2k-1},
n_{2k}]$ such that
\[ |e(G_V[n'_k]) - e| < \frac{2}{n_{2k-1} + 1}, \qquad \forall k \in \N.
\]

\noindent This immediately implies that (the $n'_k$ are increasing and)
$\lim_{k \to \infty} e(G_V[n'_k]) = e$, as desired.

To prove the last assertion given $0 \leq a \leq b \leq 1$, note that if
$a=b$ then we are done by the last part of Theorem \ref{Tedgelimits}. If
$a < b$ instead, then apply another part of Theorem \ref{Tedgelimits},
with $m=2$ and $0 \leq e_i := a < b =: e_s \leq 1$, via the construction
given in the proof. Thus we conclude that both $e_i, e_s$ are limiting
edge densities of $G$, but no numbers in $[0,e_i) \cup (e_s,1]$ are. The
result now follows from the above analysis in this proof.
\end{proof}

\begin{remark}
Certainly one can also define vector-valued functions $f : \G(V) \to
\R^m$ for $m \geq 1$, and consider coordinate-wise convergence via
standard topological arguments, as well as coordinate-wise
differentiation. In this context we cite~\cite{DGKR-clustering}, in which
the consistency of coloring and of spectral clustering, as well as
node-level statistics, were discussed in the setting of graphons. An
interesting avenue for future exploration involves examining analogous
results for the spectra of labelled graphs and their limits.
Specifically, how does the spectrum behave under taking limits in $\G(V)$
for countable $V$; and considering clustering phenomena (including their
consistency) in the labelled graph setting. Notice that since $\G(V)$
contains graphs with vertices having possibly infinite degrees, one again
needs to consider the normalized Laplacian, as in~\cite{DGKR-clustering}.
\end{remark}

\subsection*{Concluding remarks}

We conclude by noting that this paper systematically develops a rigorous
foundation and mathematical theory for the analysis of labelled graphs.
The motivation for this endeavor comes from real-world phenomena, and as
discussed above, the model of graph space discussed in this paper has a
rich structure, allowing us to study topology, analysis, and
differentiation on $\G(V)$. In subsequent work \cite{KR2}, we use these
properties to study measure theory on graph space $\G(V)$, with the goal
of developing and exploring Lebesgue-type integration and probability
theory on it. These goals are tackled in greater generality in our
subsequent works~\cite{KR4,KR3,KR1}.
%(Recall: there is a ``random infinite graph'' which is isomorphic to
%every graph in a large subset of $\G(V)$, by Erdos-Renyi's old paper -
%see a 2013 preprint of ``Chapter 1'' by Cameron. However, of course
%renaming countably many vertices changes the limiting edge densities.)
%}}}

\subsection*{Acknowledgments}

We would especially like to thank Dominique Guillot for his many comments
and suggestions that resulted in significant strengthening and tightening
of the paper. Thanks are also due to Peter Diao and David Montague for
useful suggestions. We also thank the referee for useful comments.

%{{{1 Bibliography

%}}}

\end{document}